\documentclass[a4paper]{amsart}
\usepackage{amssymb}
\usepackage[all]{xy}
\SelectTips{cm}{}
\calclayout
\makeatletter
\makeatother

\theoremstyle{plain}
\newtheorem{thm}{Theorem}[section]
\newtheorem{cor}[thm]{Corollary}
\newtheorem{lem}[thm]{Lemma}
\newtheorem{exm}{Example}
\newtheorem{prop}[thm]{Proposition}
\newtheorem{defn}[thm]{Definition}
\newtheorem{rem}[thm]{Remark}

\newcommand{\smatrix}[1]{\left(\begin{smallmatrix}#1\end{smallmatrix}\right)}

\def\Inj{\operatorname{Inj}}
\def\inj{\operatorname{inj}}
\def\mod{\operatorname{mod}}
\def\Mod{\operatorname{Mod}}

\def\Hom{\operatorname{Hom}}
\def\End{\operatorname{End}}

\def\Rad{\operatorname{Rad}}
\def\soc{\operatorname{soc}}
\def\rad{\operatorname{rad}}

\def\Ker{\operatorname{Ker}}
\def\Im{\operatorname{Im}}

\def\Coker{\operatorname{Coker}}
\def\supp{\operatorname{supp}}
\def\bdim{\operatorname{\bold{dim}}}
\def\uMod{\operatorname{\underline{Mod}}\nolimits}
\def\umod{\operatorname{\underline{mod}}\nolimits}
\def\mod{\operatorname{mod}}
\def\Mod{\operatorname{Mod}}

\def\Hom{\operatorname{Hom}}
\def\End{\operatorname{End}}
\def\proj{\operatorname{proj}}
\def\inj{\operatorname{inj}}

\def\Inj{\operatorname{Inj}}

\def\Im{\operatorname{Im}}
\def\Ker{\operatorname{Ker}}
\def\Coker{\operatorname{Coker}}

\def\Ab{\operatorname{Ab}}
\def\Zg{\operatorname{Zg}}
\def\fun#1#2#3{#1: #2 \rightarrow #3}
\def\mabb#1{\mathbb{#1}}
\def\mac#1{\mathcal{#1}}

\def\db #1{D^b(\mod #1)}
\def\a{\alpha}
\def\b{\beta}

\def\rta{\rightarrow}
\def\a{\alpha}
\def\b{\beta}
\def\la{\lambda}
\def\La{\Lambda}
\def\Sg{\Sigma}
\def\sg{\sigma}
\def\bu{\bullet}

\def\aif{A^{\infty}_{\infty}}
\title{}
\author{}

\begin{document}
\title{Homotopy categories of injective modules over derived discrete algebras}

\author{Zhe Han}
\address{Zhe Han \\School of Mathematics and Information Science\\
Henan University \\Kaifeng 475001\\ China.}
\address{Fakult\"at f\"ur Mathematik\\
Universit\"at Bielefeld \\D-33501 Bielefeld\\ Germany.}
\email{hanzhe0302@gmail.com}
\begin{abstract}
We study the homotopy category $K(\Inj A)$  of all injective modules over a finite dimensional algebra $A$ with discrete derived category.
We give a classification of the indecomposable objects of $  K(\Inj A)$ for any radical square zero self-injective algebra $A$.
In particular, every indecomposable object is endofinite.
\end{abstract}

\maketitle
\setcounter{tocdepth}{1}

\section{Introduction}
For a finite dimensional algebra $A$ over a algebraically closed field $k$, we investigate the homotopy category $K(\Inj A)$ of all injective $A$-modules.  By \cite{kr05}, we know that $K(\Inj A)$ is compactly generated triangulated category.
The subcategory of compact objects in $K(\Inj A)$ is triangle equivalent to $\db A$, the bounded derived category of finitely generated $A$-modules. Thus we can think of $K(\Inj A)$ as a `compactly generated complete' of $\db A$. The category $K(\Inj A)$ is also very complicated in general. Only in
some special cases, every object in $K(\Inj A)$ can be expressed as direct sum of indecomposable objects.

We focus on the algebra $A$ with discrete derived category $\db A$ \cite{v01} and try to understand the category $K(\Inj A)$.
An algebra $A$ with discrete derived category has a rather simple derived category $\db A$. We expect that $K(\Inj A)$ is easy to control in this case.

For every radical square zero self-injective algebra $A$, it has a discrete derived category. We know that
every basic radical square zero self-injective algebra $A$ is of the form
$kC_n/I_n,n\geq 1$, where the quiver $C_n$ is given by
\begin{center}
\begin{tabular}{ccc}
$\xy <1cm,0cm>:
(0,0)*+{1}="1",
(0,1)*+{2}="2",
(1,1.5)*+{3}="3",
(2,1)*+{4}="4",
(2,0)*+{5}="5",
(1,-0.5)*+{n}="6",
(-0.5,0.5)*+{\a}="a1",
(0.5,1.5)*+{\a}="a2",
(1.5,1.5)*+{\a}="a3",
(2.5,0.5)*+{\a}="a4",

\ar@{->}"1";"2"<0pt>
\ar@{->}"2";"3"<0pt>
\ar@{->}"3";"4"<0pt>
\ar@{->}"4";"5"<0pt>
\ar@{--}"5";"6"<0pt>
\ar@{--}"6";"1"<0pt>


\endxy$
\end{tabular}
\end{center}
with relations $I_n=\a^2$.
 Let $\uMod \hat{A}$ be the stable modules category of the repetitive algebra $\hat{A}$ of $A$. We study the category $  K(\Inj A)$ using the fully faithful triangle functor $F:  K(\Inj A)\rta \uMod \hat{A}$
constructed in \cite{kl06}. There is no explicit description of this functor; in particular, we do not know its image. For symmetric algebras we find another embedding which has an explicit expression. By this new embedding, we are able to describe the image of the indecomposable objects in $K(\Inj kC_1/I_1)$ and then extend to general cases.
This leads to a full classifiction of indecomposable objects in $K(\Inj A)$ for radical square zero self-injective algebras $A$.

Our main result is the following.
\begin{thm}
If $A$ is of the form $kC_n/I_n$, then the indecomposable objects of $K(\Inj A)$, up to shifts, are exactly the truncations $\sg_{\leq m}\sg_{\geq l}I^{\bu}$ ,
where $I^{\bullet}$ is the periodic complex

\[\xymatrix{\ldots\ar[r]&I_n\ar[r]^{d_n}&I_{n-1}\ar[r]^{d_{n-1}}&I_{n-2}\ar[r]&\ldots\ar[r]&I_1\ar[r]^{d_1}&I_n\ar[r]^{d_n}&\ldots}\]

with differential $d_j:I_j\rta I_{j-1}$ being composite of $I_j\rta I_j/\soc I_j\cong \rad I_{j-1}\hookrightarrow I_{j-1}$. Moreover, all indecomposable objects are endofinite.
 \end{thm}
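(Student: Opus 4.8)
The plan is to build the classification in stages, starting from the simplest algebra $kC_1/I_1 = k[x]/(x^2)$ and bootstrapping up to $kC_n/I_n$, exploiting the explicit embedding the paper promises for symmetric algebras. First I would set up the periodic complex $I^\bullet$ of injectives with the differentials $d_j$ as described, and observe that $I^\bullet$ is an acyclic complex (since each $d_j$ factors as an epi onto $\rad I_{j-1}$ followed by a mono, and $\soc I_j = \Im d_{j+1}$, so $\Ker d_j = \soc I_j = \Im d_{j+1}$), hence $I^\bullet$ is a totally acyclic complex representing a nonzero object of $K(\Inj A)$ which is not compact. The truncations $\sigma_{\le m}\sigma_{\ge l} I^\bullet$ then interpolate between the compact objects (finite truncations, living in $\db A \simeq K^c(\Inj A)$) and the acyclic object $I^\bullet$ itself (the ``two-sided infinite'' truncation, i.e. $l \to -\infty$, $m \to +\infty$) and the one-sided infinite truncations.

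Next I would handle $n=1$ in detail. Here $A$ is symmetric, so I would use the explicit embedding $K(\Inj A) \hookrightarrow \uMod\hat A$ (the alternative to Krause's $F$ that the introduction advertises) to transport the problem into the stable module category of the repetitive algebra $\hat A$. The repetitive algebra of $k[x]/(x^2)$ is well understood — it is a Nakayama-type infinite algebra whose stable category has a transparent combinatorics of string/band modules — and one reads off that the indecomposable objects are precisely the (possibly infinite) ``interval'' modules, which correspond under the embedding exactly to the truncations $\sigma_{\le m}\sigma_{\ge l}I^\bullet$ and their shifts. Endofiniteness in the $n=1$ case should follow because each such interval module, viewed over $\hat A$, has endomorphism ring that is finite-dimensional or at worst a finitely generated module over its center (a power series or Laurent-type ring in one variable), and in all cases is of finite length over its own endomorphism ring; alternatively one checks directly that $\Hom_{K(\Inj A)}(C, \sigma_{\le m}\sigma_{\ge l}I^\bullet)$ is finitely generated over $\End$ for every compact $C$, which by the compact generation of $K(\Inj A)$ is exactly the definition of endofinite.

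Then I would pass from $n=1$ to general $n$ by a covering / folding argument: the algebra $kC_n/I_n$ and $kC_1/I_1$ are related by a Galois covering (the cyclic quiver $C_n$ is an $n$-fold ``unrolling''), and this induces a relation between $K(\Inj kC_n/I_n)$ and the equivariant or graded version of $K(\Inj kC_1/I_1)$ that preserves indecomposability and endofiniteness; concretely, the periodic complex $I^\bullet$ for $C_n$ is the pullback of the one for $C_1$, and its truncations correspond bijectively to the truncations downstairs. Checking that this covering functor behaves well on $K(\Inj -)$ — i.e. that every indecomposable upstairs really is a truncation of $I^\bullet$ and nothing new appears — is where I expect the main obstacle to lie: one must rule out ``exotic'' acyclic or non-acyclic complexes of injectives that are not homotopy equivalent to any truncation, and the cleanest way is probably to show directly that every object of $K(\Inj A)$ decomposes using the structure of $\Inj A$ (only $n$ indecomposable injectives, each uniserial of length $2$) together with a careful analysis of the differentials forced by $\rad^2 A = 0$. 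Finally, endofiniteness for general $n$ follows from the $n=1$ case via the covering, or is re-derived directly: each truncation $\sigma_{\le m}\sigma_{\ge l}I^\bullet$ has endomorphism ring either $k$, $k[x]/(x^2)$-like, or $k[[x]]$/$k[x,x^{-1}]$-like depending on whether the truncation is finite, one-sided infinite, or two-sided infinite, and in every case the object is of finite length over this ring — so all indecomposables are endofinite, completing the theorem.
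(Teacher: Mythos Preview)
Your overall architecture matches the paper's: reduce to $n=1$ via an explicit embedding $K(\Inj\Lambda)\hookrightarrow\uMod\hat\Lambda$ for the symmetric algebra $\Lambda=k[x]/(x^2)$, classify indecomposables there, then transfer to general $n$ by a covering argument. But the mechanisms you sketch at the two technical pressure points differ from what the paper actually does, and your endofiniteness computation contains an inaccuracy.

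For $n=1$, the paper does not invoke a string/band classification. Instead it argues (Proposition~3.7) that any indecomposable complex can be chosen with all differential entries in $\Rad(\Lambda,\Lambda)$, so its image in $\Mod\hat\Lambda$ is a radical-square-zero representation of the repetitive quiver $\hat Q$. It then applies the separated-quiver functor (Propositions~2.5 and~2.6) to pass to representations of $\hat Q^s=A^\infty_\infty$, whose indecomposables are the thin interval modules $k_{ab}$ (Proposition~3.8). Pulling back gives exactly the truncations $I^\Lambda_{m,n}$. Your ``interval modules of a Nakayama-type algebra'' intuition lands in the same place, but the separated-quiver reduction is the concrete device.

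For general $n$, the paper does not lift from $n=1$ to $n$; it goes the other direction. The pushdown functor $F_\lambda:K(\Inj A_n)\to K(\Inj A_1)$ is shown to \emph{preserve indecomposability} (Proposition~3.16). The proof runs through the commutative square $F^{A_1}F_\lambda\cong F_\lambda F^{A_n}$ (Proposition~3.13), together with the observation that the separated quiver of $\bar{\hat A}_n$ is a disjoint union of $n$ copies of $A^\infty_\infty$, so indecomposables stay indecomposable under $F_\lambda$ (Lemma~3.15). Once $F_\lambda$ preserves indecomposability, any indecomposable $X\in K(\Inj A_n)$ has $F_\lambda(X)$ indecomposable in $K(\Inj A_1)$, forcing each $X^i$ to be a single $I_k$ or zero; the Hom-computation between indecomposable injectives then pins $X$ down as a truncation. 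This is sharper than your proposed ``rule out exotics'' step and avoids a direct differential analysis.

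Finally, your endomorphism-ring claim is off: in $K(\Inj A_n)$ one has $\End(I_{l,m}[r])\cong k$ for \emph{every} truncation, including the one- and two-sided infinite ones --- not $k[[x]]$ or $k[x,x^{-1}]$. The paper's endofiniteness proof is correspondingly simpler: $\dim_k\Hom(I',I)\le 1$ for any pair of indecomposables, and since $K^c(\Inj A_n)$ is Krull--Schmidt this immediately gives $\dim_k\Hom(C,I_{l,m}[r])<\infty$ for every compact $C$.
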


The paper is organized as follows. In section 2, we give some background materials about derived discrete algebras, compactly generated triangulated categories and quiver representations. In section 3,
 we give the classification of the indecomposable
objects in the homotopy category $  K(\Inj A)$ for $A$ self-injective and
radical square zero. In section 4 we describe the Ziegler spectrum of $  K(\Inj A)$ for radical square zero self-injective algebras $A$.

\section{Preliminaries}
\subsection{Derived discrete algebras}
Throughout the paper, we fix the field $k$ to be algebraically closed. For an algebra $A$, let $\db A$ be the bounded derived category of $\mod A$. For a complex $X$ in $\db A$,
we define the \emph{homological dimension} of $X$ to be the vector
 $h\bdim (X)=(\bdim H^i(X))_{i\in \mabb Z}$, where $\bdim H^i(X)$ is the dimension vector of $A$-module $H^i(X)$.

There are some algebras with this property that there are only finitely many indecomposable objects in the derived category with the some homological dimension \cite{v01}.

\begin{defn}
 A derived category $\db A$ is discrete if for every vector $d=(d_i)_{i\in \mabb Z}$ with $d_i\geq 0$, there are only finitely many isomorphism classes of indecomposables $X\in\db A$ such that $h\bdim X=d$.
\end{defn}
Following \cite{ba06},
we call the algebra $A$ \emph{derived discrete}.
Let $\Omega$ be the set of all triples $(r,n,m)$ of integers such
that $n\geq r\geq 1$ and $m\geq 0$. For each $(r,n,m)\in\Omega$
consider the quiver $Q(r,n,m)$ of the form
\begin{displaymath}
\xymatrix{&&&&1\ar[r]^{\alpha_1}&\ldots\ar[r]^{\alpha_{n-r-2}}&n-r-1\ar[dr]^{\alpha_{n-r-1}}&\\
-m\ar[r]^{\a_{-m}}&\ldots\ar[r]^{\alpha_{-2}}&-1\ar[r]^{\alpha_{-1}}&0\ar[ru]^{\alpha_0}&&&&n-r\ar[dl]^{\alpha_{n-r}}\\
&&&&n-1\ar[lu]^{\alpha_{n-1}}&\ldots\ar[l]^{\alpha_{n-2}}&n-r+1\ar[l]^{\alpha_{n-r+1}}&}
\end{displaymath}
The ideal $I(r,n,m)$ in the path algebra $kQ(r,n,m)$ is generated by
the paths

$\alpha_0\alpha_{n-1},\alpha_{n-1}\alpha_{n-2},\ldots,\alpha_{n-r+1}\alpha_{n-r}$,
 and let $L(r,n,m)=kQ(r,n,m)/I(r,n,m)$.

It is an easy observation that  $gl.\dim L(r,n,m) =\infty$ for $n=r$.

\begin{prop}\cite[Theorem A]{bgs04}\label{dec1}
Let A be a connected algebra and assume that $A$ is not derived hereditary of
Dynkin type. The following  are equivalent:

(1) $\db A$ is discrete.

(2) $\db A\cong \db {L(r,n,m)}$, for some $(r,n,m)\in
\Omega$.

(3) A is tilting-cotilting equivalent to $L(r,n,m)$, for
$(r,n,m)\in\Omega$.
\end{prop}
From this proposition, we only need to consider the algebras of the form $L(r,n,m)$ when we consider the algebras with discrete derived categories.
\subsection{Compactly generated triangulated categories}
\begin{defn}
Let $\mac T$ be a triangulated category with infinite coproducts. An object T in $\mac T$ is compact if $\Hom_{\mac T}(T,-)$ preserves all coproducts. The category $\mac T$ is called
compactly generated if there is  a set $\mabb S$ of compact objects such that $\Hom_{\mac T}(T,\Sg^iX)=0$ for all $T\in \mabb S$ and $i\in \mabb Z$ implies $X=0$.
\end{defn}
Now we assume that $\mac T$ has infinite coproducts. One can show that the category of all compact objects are closed under shifts and triangles. Thus the full subcategory $\mac T^c$ of compact objects
in $\mac T$ is a triangulated category.

\begin{lem}\cite[Lemma 3.2]{nee96}\label{devis}
 Let $\mac T$ be a compactly generated triangulated category with $\mabb S$ a generating set. If $\mac R$
 is  a full subcategory containing $\mabb S$ and closed under forming  infinite direct sums,then there is a triangulated equivalence $\mac R\cong\mac T$.
\end{lem}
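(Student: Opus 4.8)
The plan is to prove that the inclusion functor $i\colon \mac R\hookrightarrow \mac T$ is a triangle equivalence by establishing essential surjectivity; fullness and faithfulness are automatic because $\mac R$ is a full subcategory, and exactness is built into the hypothesis once one reads $\mac R$ as a localizing subcategory, i.e.\ a triangulated subcategory closed under coproducts. The single reformulation of the hypotheses that drives everything is the generating property: writing $\mabb S^{\perp}=\{Y\in\mac T : \Hom_{\mac T}(\Sg^i T,Y)=0 \text{ for all } T\in\mabb S,\ i\in\mabb Z\}$, the assertion that $\mabb S$ generates $\mac T$ is exactly $\mabb S^{\perp}=0$.

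First I would produce, for an arbitrary object $X\in\mac T$, a Bousfield approximation triangle
\[ X'\longrightarrow X\longrightarrow X''\longrightarrow \Sg X' \]
with $X'\in\mac R$ and $X''\in\mac R^{\perp}$. This is the technical heart and the step I expect to be the main obstacle. One way to obtain it is to invoke Brown representability for $\mac T$ together with the fact that $\mac R$, being generated by the set $\mabb S$ of compact objects, admits a Bousfield localization. Alternatively one constructs $X'$ by hand as a homotopy colimit of a cellular tower $X_0\to X_1\to\cdots$: take $X_0=\bigoplus \Sg^{i}T$ indexed by all morphisms $\Sg^{i}T\to X$ with $T\in\mabb S$, so that $\Hom(\Sg^iT,X_0)\to\Hom(\Sg^iT,X)$ is surjective, and form each $X_{n+1}$ from $X_n$ by coning off the maps out of shifts of $\mabb S$ that become zero in $X$, thereby killing the kernels in the limit. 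Because $\mac R$ is triangulated, closed under coproducts, and contains $\mabb S$, every stage $X_n$ and the homotopy colimit $X'=\operatorname{hocolim}X_n$ lie in $\mac R$. Compactness of the objects of $\mabb S$ is used crucially here, since it gives $\Hom(\Sg^iT,\operatorname{hocolim}X_n)\cong \dlim\Hom(\Sg^iT,X_n)$ and thereby forces $X'\to X$ to induce isomorphisms on $\Hom(\Sg^iT,-)$; the long exact sequence of the triangle then yields $\Hom(\Sg^iT,X'')=0$, that is, $X''\in\mac R^{\perp}$.

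Next I would compare the two orthogonals. Since $\mabb S\subseteq\mac R$, any object orthogonal to all of $\mac R$ is in particular orthogonal to all shifts of objects of $\mabb S$, so $\mac R^{\perp}\subseteq\mabb S^{\perp}$. Combined with the generating property $\mabb S^{\perp}=0$, this gives $\mac R^{\perp}=0$, hence $X''=0$ in the approximation triangle.

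Finally, from $X''=0$ the triangle collapses to an isomorphism $X'\xrightarrow{\ \sim\ }X$, so the arbitrary object $X$ is isomorphic to the object $X'\in\mac R$. Thus $i$ is essentially surjective, and the inclusion $\mac R\hookrightarrow\mac T$ is the desired triangle equivalence. The only delicate point is the construction of the approximation triangle; everything after it is a formal orthogonality argument.
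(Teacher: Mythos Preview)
The paper does not supply its own proof of this lemma; it is quoted verbatim from Neeman \cite[Lemma~3.2]{nee96} and used as a black box. Your argument is correct and is essentially Neeman's original proof: build a cellular approximation $X'\to X$ by a homotopy colimit of objects obtained from $\mabb S$ via coproducts and cones, use compactness of the generators to show the cofiber $X''$ lies in $\mabb S^{\perp}$, and conclude $X''=0$ from the generating hypothesis. One small remark: you rightly flag that the lemma as stated in the paper omits the hypothesis that $\mac R$ be a \emph{triangulated} subcategory, and your construction needs this (each stage $X_n$ is formed by taking a cone, and the homotopy colimit is itself a cone on a coproduct map). Neeman's original statement includes this hypothesis explicitly, so your reading of $\mac R$ as a localizing subcategory is the intended one.
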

For an algebra $A$, $\Mod A$ denotes the category of all $A$-modules, and $\Inj A$ is the category of all injective $A$-modules.
The unbounded derived category $  D(\Mod A)$ is compactly generated. Its full subcategory of compact objects is $  K^b(\proj A)$, the full subcategory of perfect complexes. The homotopy category
 $  K(\Inj A)$ of $\Inj A$ is compactly generated \cite{kr05}. There is a characterization of the subcategory of compact objects in $K(\Inj A)$.
\begin{prop}\cite[Proposition 2.3]{kr05}
 If $A$ is a finite dimensional $k$-algebra, then the category $  K(\Inj A)$ is compactly generated, and there is a natural triangle equivalence \[  K^c(\Inj A)\cong \db A.\]
\end{prop}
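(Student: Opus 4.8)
The plan is to realize the claimed equivalence through the injective-resolution functor and to verify compact generation by a direct homotopy-theoretic computation. First I would record the structural input coming from $A$ being finite dimensional, hence (left) Noetherian: arbitrary coproducts of injective $A$-modules are again injective, so $\Inj A$ is closed under coproducts in $\Mod A$ and $K(\Inj A)$ is a triangulated category with all coproducts. I set up the canonical triangle functor $q\colon K(\Inj A)\rta D(\Mod A)$ as the composite of the inclusion $K(\Inj A)\hookrightarrow K(\Mod A)$ with the Verdier localization $K(\Mod A)\rta D(\Mod A)$, and I choose injective resolutions to obtain a triangle functor $\mathbf i\colon \db A\rta K(\Inj A)$ which is a section of $q$ (that is, $q\circ\mathbf i\cong\mathrm{id}$). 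The candidate generators are $\mathcal S=\{\mathbf i M : M\in\mod A\}$, and the equivalence to be proved will be induced by $q$ with quasi-inverse $\mathbf i$.

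The technical engine is the natural isomorphism $\Hom_{K(\Inj A)}(\mathbf i X, Y)\cong \Hom_{K(\Mod A)}(X,Y)$ for $X\in\db A$ and any $Y\in K(\Inj A)$. This rests on the standard fact that a chain map from a bounded-below acyclic complex into a complex of injectives is null-homotopic: using injectivity of the terms of $Y$ one builds the contracting homotopy by induction up the (bounded-below) degrees, factoring through the cycles at each stage. Applying this to the cone of the quasi-isomorphism $X\rta\mathbf i X$, which is bounded below and acyclic, yields the isomorphism, since $\Hom_{K(\Inj A)}$ and $\Hom_{K(\Mod A)}$ agree on complexes of injectives.

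Compactness of each $\mathbf i X$, and in particular of the objects of $\mathcal S$, then follows: by the Hom-formula it suffices that $\Hom_{K(\Mod A)}(X,-)$ commutes with coproducts, and this holds because $X$ is a bounded complex of finitely generated modules, so in each of its finitely many degrees a map or homotopy out of a finitely generated module has finitely generated image and hence factors through a finite subcoproduct. For generation I would show that $Y\in\mathcal S^{\perp}$, with respect to all shifts, is contractible. The Hom-formula turns the orthogonality hypothesis into $\Hom_{K(\Mod A)}(M,\Sigma^i Y)=0$ for every $M\in\mod A$ and every $i$; taking $M=A$ forces $Y$ acyclic, and for acyclic $Y$ a short computation using the injective terms identifies these groups with extension groups $\Ext^1_A(M,Z^{j}(Y))$ between $M$ and the cycle modules $Z^{j}(Y)$. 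Vanishing for all finitely generated $M$ forces every $Z^{j}(Y)$ to be injective by Baer's criterion, the relevant test modules $A/\maf a$ being finitely generated since $A$ is Noetherian; an acyclic complex of injectives with injective cycles splits into disks and is therefore contractible, i.e. $Y=0$ in $K(\Inj A)$.

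Finally I would upgrade full faithfulness and generation to the stated equivalence. The Hom-formula together with K-injectivity of the bounded-below complex $\mathbf i X'$ gives $\Hom_{K(\Inj A)}(\mathbf i X,\mathbf i X')\cong\Hom_{D(\Mod A)}(X,X')$, so $\mathbf i\colon\db A\rta K(\Inj A)$ is fully faithful with image inside $K^c(\Inj A)$. Since $\mathcal S$ is a set of compact generators, Neeman's theorem identifies $K^c(\Inj A)$ with the thick closure of $\mathcal S$; as $\mathbf i(\db A)$ is a thick subcategory containing $\mathcal S$, this forces $K^c(\Inj A)=\mathbf i(\db A)$, and hence $\mathbf i$ is a triangle equivalence $\db A\cong K^c(\Inj A)$, naturally inverse to $q$. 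The step I expect to be the main obstacle is the generation argument: the generators must detect \emph{contractibility} rather than mere acyclicity, and the passage from the vanishing of the orthogonality groups to the injectivity of the cycle modules, via $\Ext^1$ and Baer's criterion, is where the Noetherian hypothesis is genuinely used and where the argument is least formal.
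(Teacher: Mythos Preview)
The paper does not give its own proof of this proposition; it simply quotes the result from \cite{kr05}. Your argument is correct and is essentially the one Krause gives there: the Hom-formula $\Hom_{K(\Inj A)}(\mathbf i X,Y)\cong\Hom_{K(\Mod A)}(X,Y)$ for $X\in\db A$, compactness via finite generation of the terms of $X$, generation by reducing to acyclic $Y$ and then showing the cycle modules are injective, and finally Neeman's identification of the compact objects with the thick closure of a generating set. One small inaccuracy: you say the Noetherian hypothesis is needed so that the Baer test modules $A/\maf a$ are finitely generated, but cyclic modules are always finitely generated; the genuine uses of Noetherianity are that coproducts of injectives remain injective (so $K(\Inj A)$ has coproducts) and that bounded complexes of finitely generated modules give coproduct-preserving $\Hom$-functors.
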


\subsection{Repetitive algebras}
Let $A$ be a finite dimensional basic $k$-algebra. $D=\Hom_k(-,k)$ is the
standard duality on $\mod A$. $DA$ is a $A$-$A$-module via $a',a''\in
A, \varphi\in DA,(a'\varphi a'')(a)=\varphi(a'aa'')$.

\begin{defn}The repetitive algebra $\hat{A}$ of $A$ is defined as following,
the underlying vector space is given by
\[\hat{A}=(\oplus_{i\in\mathbb{Z}}A)\oplus(\oplus_{i\in\mathbb{Z}}DA)\]

denote the element of $\hat{A}$ by $(a_i,\varphi_i)_i$, almost all
$a_i,\varphi_i$ being zero. The multiplication is defined by
\[(a_i,\varphi_i)_i\cdot (b_i,\phi_i)_i=(a_ib_i,a_{i+1}\phi_i+\varphi b_i)_i\]
\end{defn}

An $\hat{A}$-module $M$ is given by $M=(M_i,f'_i)_{i\in \mathbb{Z}}$,
where $M_i$ are A-modules and $f_i:DA\otimes_AM_i\rightarrow M_{i+1}$
such that $f_{i+1}\circ (1\otimes f'_i)=0$.
Given $\hat{A}$-modules $M=(M_i,f'_i)$ and $N=(N_i,g'_i)$, the
morphism $h:M\rightarrow N$ is a sequence $h=(h_i)_{i\in
\mathbb{Z}}$ such that
\begin{displaymath}
\xymatrix{DA\otimes_A M_i\ar[r]^{f'_i}\ar[d]^{1\otimes h_i}&M_{i+1}\ar[d]^{h_{i+1}}\\
 DA\otimes N_i\ar[r]^{g'_i}&N_{i+1}}
 \end{displaymath}commutes. Sometimes, we write $(M_i,f'_i)_{i\in \mathbb{Z}}$ as
\[\cdots M_{-1}\sim^{f_{-1}}M_0\sim^{f_0}M_1\sim\cdots.\]

Let $\Mod \hat{A}$ be the category of all left $\hat{A}$-modules, and $\mod \hat{A}$ be the subcategory of finite dimensional modules. They both are Frobenius categories.
Thus the associated stable categories
$\uMod \hat{A}$ and $\umod \hat{A}$ are triangulated categories. Moreover, $\umod\hat{A}$ is the full subcategory of compact objects in $\uMod\hat{A}$.

Happel introduced the embedding functor $\db A\rta \umod A$ \cite{ha87}, and the functor was extended to $  K(\Inj A)\rta \uMod \hat{A}$ of unbounded complexes in \cite{kl06}.

\begin{prop}\cite[Theorem 7.2]{kl06}\label{inj-ebding}
There is a fully faithful triangle functor $F$ which is the composition of
\[\xymatrix{  K(\Inj A)\ar[r]&  K_{ac}(\Inj\hat{A})\ar[r]^{\sim}&\uMod\hat{A}}\]
extending Happel's functor
\[\xymatrix{D^b(\mod A)\ar[r]^{-\otimes_AA_{\hat {A}}}&D^b(\mod\hat{A})\ar[r]&\umod\hat{A}}.\]
The functor $F$ admits a right adjoint $G$ which is the composition
\[\uMod \hat{A}\xrightarrow{\sim}   K_{ac}(\Inj A)\xrightarrow{\Hom_{\hat{A}}(A,-)}  K(\Inj A).\]
\end{prop}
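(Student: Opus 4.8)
The plan is to treat the three arrows of each composition in turn: first establish the two equivalences marked $\sim$, then construct and analyze the remaining arrows, and finally deduce full faithfulness from Happel's theorem by a dévissage over the compact objects, obtaining the adjoint by Brown representability. I would begin with the target. Since the repetitive algebra $\hat{A}$ is Frobenius, $\Mod\hat{A}$ is a Frobenius category with $\Prj\hat{A}=\Inj\hat{A}$, and I would record the standard identification $\uMod\hat{A}\xrightarrow{\sim}K_{ac}(\Inj\hat{A})$ sending a module to a complete (totally acyclic) injective resolution, with quasi-inverse the cocycle functor $Z^0$. I would check that this is a triangle equivalence preserving coproducts and carrying $\umod\hat{A}$ onto $K_{ac}(\Inj\hat{A})^c$, so that it is compatible with the identifications $K^c(\Inj A)\cong\db A$ and $(\uMod\hat{A})^c\cong\umod\hat{A}$. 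This single equivalence supplies the arrow $\sim$ in both $F$ and $G$.

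Next I would construct the first arrow $\Phi\colon K(\Inj A)\to K_{ac}(\Inj\hat{A})$ as the coproduct-preserving extension of Happel's functor. Concretely, composing the exact embedding $\Mod A\to\Mod\hat{A}$ that places a module in degree $0$ with a functorial complete injective resolution over $\hat{A}$ produces, for each term of a complex $I^{\bu}\in K(\Inj A)$, an acyclic complex of injective $\hat{A}$-modules; totalizing over $I^{\bu}$ yields $\Phi(I^{\bu})$. I would then verify that $\Phi$ is a well-defined triangle functor preserving coproducts, that its image lies in $K_{ac}(\Inj\hat{A})$ since each row of the totalization is acyclic, and that on compact objects $\Phi$ recovers Happel's functor $\db A\xrightarrow{-\otimes_A A_{\hat{A}}}\db{\hat A}\to\umod\hat{A}$ under the identifications above. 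Composing $\Phi$ with the equivalence of the previous step defines $F$ and settles the extending-Happel clause.

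With $F$ constructed, I would prove full faithfulness by dévissage. On compact objects this is exactly Happel's theorem that $\db A\to\umod\hat{A}$ is fully faithful. To upgrade, fix a compact $C\in\db A$ and consider the class of $Y\in K(\Inj A)$ for which $F$ induces an isomorphism $\Hom(C,Y)\to\Hom(FC,FY)$; this class is a triangulated subcategory closed under coproducts, because $C$ and $FC$ are compact and $F$ preserves coproducts, and it contains every compact object, so by compact generation and Lemma \ref{devis} it is all of $K(\Inj A)$. Fixing then an arbitrary $Y$ and running the same argument in the first variable, again using that $F$ preserves coproducts, shows that $\Hom(X,Y)\to\Hom(FX,FY)$ is an isomorphism for all $X$ and $Y$.

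For the right adjoint, $F$ preserves coproducts and $K(\Inj A)$ is compactly generated, so $F$ admits a right adjoint $G$ by Brown representability; it remains to identify $G$ with the displayed composition. Here I would use the module-level adjoint pair $(-\otimes_A A_{\hat{A}},\,\Hom_{\hat{A}}(A,-))$, noting that $\Hom_{\hat{A}}(A,-)$ carries injective $\hat{A}$-modules to injective $A$-modules and hence sends $K_{ac}(\Inj\hat{A})$ into $K(\Inj A)$; precomposing with $\uMod\hat{A}\xrightarrow{\sim}K_{ac}(\Inj\hat{A})$ then reproduces the composition for $G$, and comparing the two adjunctions on compacts pins the identification down. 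I expect the main obstacle to be the construction of $\Phi$ together with full faithfulness on compacts: the degreewise-resolve-and-totalize construction needs care about functoriality of complete resolutions and convergence of the totalization for unbounded $I^{\bu}$, and checking that it truly restricts to $-\otimes_A A_{\hat{A}}$ on $\db A$ is where the structure of the injective $\hat{A}$-modules enters, while the extension to unbounded complexes and the existence of the adjoint are comparatively formal.
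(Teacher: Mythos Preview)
This proposition is not proved in the paper: it is quoted verbatim as \cite[Theorem 7.2]{kl06} and used as a black box. Indeed, the paper explicitly remarks in the introduction that ``There is no explicit description of this functor; in particular, we do not know its image,'' and the entire point of Section~3 is to construct, for symmetric algebras, a \emph{different} embedding $\Phi$ (Proposition~\ref{ebd1}) precisely because the functor $F$ of Proposition~\ref{inj-ebding} is inaccessible. So there is no proof in the paper to compare your proposal against.

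That said, your sketch is a reasonable outline of how one might argue, and the d\'evissage step for full faithfulness and the Brown representability step for the adjoint are correct and standard. The genuine weak point is the one you yourself flag: the ``degreewise resolve and totalize'' construction of $\Phi$ on unbounded complexes. For a doubly unbounded complex $I^{\bullet}$, neither the direct-sum nor the direct-product totalization of the double complex of complete resolutions is automatically acyclic, and there is no filtration argument available in both directions simultaneously; this is exactly why such functors are typically produced abstractly (as a left adjoint, or via a recollement) rather than by an explicit formula. If you intend to supply a proof rather than cite one, you would be better served constructing $G$ first---which is concrete, since $\Hom_{\hat A}(A,-)$ preserves injectives and sends $K_{ac}(\Inj\hat A)$ to $K(\Inj A)$---and then obtaining $F$ as its left adjoint, checking on compacts that it agrees with Happel's functor.
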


\subsection{Radical square zero algebras}

In \cite{ars95},  there is a connection between an artin algebra $A$ with $\rad^2(A)=0$ and a hereditary algebra with radical square zero, where $\rad(A)$ is the Jacobison radical of $A$, denoted by $J$.

Now, assume $A'$ is any artin algebra, thus the algebra $A=A'/J'^2$ is a radical square zero algebra. We can associate a hereditary algebra
$A^s=(\begin{smallmatrix}A/J&0\\J&A/J\end{smallmatrix})$ with radical square zero to $A$ \cite{ars95}. The radical of $A^s$ is $\rad(A^s)=(\begin{smallmatrix}0&0\\J&0\end{smallmatrix})$.
Let Q be a quiver with vertex set $Q_0={1,2,\ldots,n}$, the
\emph{separated quiver} $Q^s$ of Q has 2n vertices
${1,\ldots,n,1',\ldots,n'}$ and an arrow $l\rightarrow m'$ for every
arrow $l\rightarrow m$ in Q.
 The ordinary quiver  $Q^s$ of $A^s$ coincides with the separated quiver $(Q_A)^s$ of $Q_A$.

We can study the algebra A via the hereditary algebra $A^s$. For any $M\in \Mod A$, we define an $A^s$-module
$[\begin{smallmatrix}M/JM\\JM\end{smallmatrix}]$. Then there is a functor \[S:\Mod A\rta\Mod A^s,\quad M\mapsto [\begin{smallmatrix}M/JM\\JM\end{smallmatrix}].\] The functor $S$ has some nice properties , which
could be generalized to all modules, see \cite[Chapter X]{ars95} or \cite[Proposition 8.63]{jl89}.
\begin{prop}\label{sepa}
Let A,$A^s$ and $S$ as above, then we have the following.
\begin{enumerate}
\item[(1)] $M $and N in $\Mod A$ are isomorphic if and only if $S(M)$ and $S(N)$ are
isomorphic in $\Mod A^s$.
\item[(2)] $M$ is indecomposable in $\Mod A$ if and only if $S(M)$ is
indecomposable in $\Mod A^s$.
\item[(3)] $M$ is projective in  $\Mod A$ if and only if $S(M)$ is
projective in $\Mod A^s$.
\end{enumerate}
\end{prop}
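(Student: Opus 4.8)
The plan is to prove the three assertions about the separation functor $S$ in sequence, exploiting the fact that $S$ is defined by recording the radical filtration $0\subseteq JM\subseteq M$ of a module $M$ over the radical square zero algebra $A$. First I would set up the functoriality and the key additivity property: since $A$ has $\rad^2(A)=0$, any $A$-homomorphism $f\colon M\to N$ satisfies $f(JM)\subseteq JN$ and hence induces maps on both $M/JM$ and $JM$, so $S$ is a well-defined additive functor, and in particular $S(M\oplus N)\cong S(M)\oplus S(N)$. The action of $A^s$ on $\smatrix{M/JM\\JM}$ is exactly the multiplication $J\otimes(M/JM)\to JM$ induced by the $A$-module structure, which is how the off-diagonal part of $A^s=\smatrix{A/J&0\\J&A/J}$ acts.

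For part (1), I would show that $S$ is faithful and that it reflects isomorphisms. The forward direction is immediate from functoriality. For the converse, given an isomorphism $g\colon S(M)\to S(N)$ in $\Mod A^s$, I would observe that $g$ is a pair $(g_0,g_1)$ with $g_0\colon M/JM\to N/JN$ and $g_1\colon JM\to JN$ compatible with the $J$-action, and the goal is to lift this to an $A$-isomorphism $M\to N$. The main technical point is to produce an $A$-linear map $M\to N$ lifting $g_0$ and $g_1$; here I would use that $M/JM$ is a semisimple module and that the extension $0\to JM\to M\to M/JM\to 0$ is classified by the $J$-action data that $S$ records faithfully, so compatibility of $(g_0,g_1)$ forces the existence of a lift which is then an isomorphism by the snake/five lemma applied to the two short exact sequences.

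Part (2) follows formally from (1) together with additivity: $M$ is indecomposable iff its endomorphism ring is local, and I would argue that $S$ induces a ring isomorphism (or at least a sufficiently faithful map) $\End_A(M)\to\End_{A^s}(S(M))$ so that idempotents correspond. More directly, any decomposition $S(M)\cong X\oplus Y$ in $\Mod A^s$ can be transported back using (1) and the essential surjectivity of $S$ onto the relevant subcategory: every $A^s$-module in the image is of the form $S(-)$, so $X\oplus Y\cong S(M_1)\oplus S(M_2)\cong S(M_1\oplus M_2)$, whence $M\cong M_1\oplus M_2$ by (1), giving a nontrivial decomposition of $M$ precisely when the one of $S(M)$ is nontrivial. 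For part (3), I would compute $S$ on the indecomposable projective $A$-modules directly: if $P_i=Ae_i$ then $P_i/JP_i$ is the simple $S_i$ and $JP_i=\bigoplus_{\alpha\colon i\to j} S_j$ is semisimple determined by the arrows out of $i$, and one checks that $S(P_i)$ is exactly the indecomposable projective $A^s$-module at the vertex $i$; conversely every indecomposable projective of the hereditary algebra $A^s$ arises this way. Combined with additivity and part (2), this yields the projectivity equivalence.

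The hard part will be part (1), specifically the lifting argument: showing that compatible isomorphisms on the top $M/JM$ and the radical $JM$ actually assemble into a genuine $A$-module isomorphism $M\to N$ rather than merely an isomorphism of the associated graded. The subtlety is that $M$ need not split as $M/JM\oplus JM$ over $A$, so one cannot simply take a direct sum of $g_0$ and $g_1$; instead one must use that over a radical square zero algebra the full module structure on $M$ is recovered from the $J$-action $J\otimes_{A/J}(M/JM)\to JM$, which is precisely the datum that $S$ encodes faithfully. Once this faithful encoding is made precise, the five lemma closes the argument and parts (2) and (3) are comparatively routine consequences.
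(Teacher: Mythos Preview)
The paper does not prove this proposition; it is quoted from the cited references (Auslander--Reiten--Smal{\o} and Jensen--Lenzing) without argument. So there is no in-paper proof to compare against, and your outline stands or falls on its own.

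Your overall strategy is sound, but one claim is false: $S$ is \emph{not} faithful. Take $A=k[x]/(x^2)$ and $f\colon A\to A$ given by multiplication by $x$. Then $f$ induces zero on $A/JA$ (since $x\equiv 0$ there) and zero on $JA=(x)$ (since $x\cdot x=0$), so $S(f)=0$ although $f\neq 0$. In general any map factoring as $M\twoheadrightarrow M/JM\to JN\hookrightarrow N$ is killed by $S$. Consequently your first approach to part~(2), via an isomorphism $\End_A(M)\cong\End_{A^s}(S(M))$, does not go through as stated.

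What does survive is the core of your argument for (1): since $A/J$ is semisimple, the extension $0\to JM\to M\to M/JM\to 0$ splits over $A/J$, and the full $A$-module structure on $M\cong (M/JM)\oplus JM$ is then determined (using $J^2=0$) by the map $J\otimes_{A/J}(M/JM)\to JM$, which is exactly the datum $S$ records. This shows that $S$ reflects isomorphisms without any appeal to faithfulness. For (2), your second route via decompositions works once you add the observation that the essential image of $S$ consists of those $A^s$-modules whose structure map is surjective, and that this property passes to direct summands (the structure map on a direct sum is block diagonal). With that in hand, any summand of $S(M)$ is again an $S(M')$, and (1) finishes the argument. Part~(3) is fine as you describe it.
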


Moreover, the functor $S$ induces a stable equivalence,

 \[S:\uMod A\rta\uMod A^s.\]

Let $Q$ be any quiver, and $V$ be a representation of $Q$, we define the \emph{radical} $\Rad V$ of $V$ to be the subrepresentation of $V$ with $(\Rad V)_i=\sum_{\a:j\rta i}\Im V_{\a}$, and $\Rad^{n+1}=\Rad(\Rad^n V)$.
From this definition, $V/\Rad V$ is
semisimple. The \emph{Jacobison radical} $\rad V$ of $V$ is the intersection of all maximal subrepresentations of $X$,and $\rad^{n+1}=\rad(\rad^n V)$.

In general, $\rad V$ is a subrepresentation
of $\Rad V$, but if $\Rad^n(V)=0$ for some $n\geq 1$, then $\rad V=\Rad V$. If a bounded quiver $(Q,I)$ does not infinite length path, then $\Rad^mV=0$ from some $m\in \mabb N$.
The representation $V$ is called \emph{radical square zero} if $\Rad^2 V=0$. When we consider radical square zero representations of a quiver $Q$, it is equivalent to consider the module category $\Mod kQ/R_Q^2$,
where $R_Q$ is the ideal generated by all arrows.

There is another version for this correspondence via quiver representations \cite{krar}. There are two functors
\[\fun {T'}{Rep(Q,k)}{Rep(Q^s,k)}\]
and \[\fun T{Rep(Q^s,k)}{Rep(Q,k)}\] which are defined as
follows: Given a representation $X$ of $Q$, let $(T'X)_i=(X/\Rad
X)_i$ and $(T'X)_{i'}=(\Rad X/\Rad^2 X)_i$ for each vertex $i\in
Q_0$. For each arrow $\fun aij$ of $Q$, let $\fun
{(T'X)_{\bar\alpha}}{(T'X)_i}{(T'X)_{j'}}$ be the map which is induced
by $X_\alpha$.  Given a representation $Y$ of $Q^s$, let
$(TY)_i=Y_i\oplus Y_{i'}$ for each vertex $i\in Q_0$. For each arrow
$\alpha\in Q_1$, let $(TY)_{\alpha}=\smatrix{0&0\\
Y_{\alpha}&0}$.

We call a representation $X$  \emph{separated} if $(\Rad X)_i=X_i$
for every sink $i$.

\begin{prop}\cite[Proposition 11.2.2]{krar}\label{rad3}
The functors $T'$ and $T$ induce mutually inverse bijections between
the isomorphism classes of radical square zero representations of
$Q$ and the isomorphism classes of separated representations of
$Q^s$.
\end{prop}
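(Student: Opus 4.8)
The plan is to verify that $T$ and $T'$ send the relevant classes of representations into one another and then to produce, for each radical square zero representation $X$ of $Q$ and each separated representation $Y$ of $Q^s$, isomorphisms $TT'X\cong X$ and $T'TY\cong Y$. Since $T$ and $T'$ are functors, they carry isomorphisms to isomorphisms and hence induce well-defined maps on isomorphism classes; the two object-level isomorphisms above then show that these induced maps are mutually inverse bijections, which is exactly the assertion.

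I would first dispose of $T$. For an arbitrary representation $Y$ of $Q^s$ and an arrow $\alpha\colon i\to j$ of $Q$, the matrix $(TY)_\alpha=\smatrix{0&0\\Y_\alpha&0}$ is strictly lower triangular, so its image lies in the summand $0\oplus Y_{j'}$ of $(TY)_j=Y_j\oplus Y_{j'}$. Hence $(\Rad TY)_j\subseteq 0\oplus Y_{j'}$, and a second arrow map annihilates this summand, so $\Rad^2 TY=0$ and $TY$ is radical square zero. The harder direction is that $T'X$ is separated when $\Rad^2X=0$. Because every arrow of $Q^s$ enters a primed vertex, the substance of separatedness is that for each $j$ the induced maps $(T'X)_{\bar\alpha}\colon(X/\Rad X)_i\to(\Rad X/\Rad^2X)_j$, over all arrows $\alpha\colon i\to j$, are jointly surjective. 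Here the hypothesis $\Rad^2X=0$ enters twice: it identifies the layer $(\Rad X/\Rad^2X)_j$ with $(\Rad X)_j$, and it shows each $X_\alpha$ kills $(\Rad X)_i$ (its image lies in $(\Rad^2X)_j=0$) and lands in $(\Rad X)_j$, so $X_\alpha$ factors through the top and the induced $(T'X)_{\bar\alpha}$ has the same image as $X_\alpha$. Since $(\Rad X)_j=\sum_{\alpha\colon i\to j}\Im X_\alpha$ by definition, the joint image is all of $(\Rad X)_j=(T'X)_{j'}$, giving separatedness.

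For $TT'X\cong X$ I would pick, at each vertex $i$, a $k$-linear splitting $X_i\cong(X/\Rad X)_i\oplus(\Rad X)_i$, which exists since only a vector-space splitting is needed and $\Rad^2X=0$ makes $(\Rad X)_i$ the full radical layer. Then $(TT'X)_i=(T'X)_i\oplus(T'X)_{i'}$ is identified with $X_i$, and for an arrow $\alpha$ the map $X_\alpha$—which annihilates $(\Rad X)_i$ and has image in $(\Rad X)_j$—is precisely the induced map $(T'X)_{\bar\alpha}$ placed in the lower-left corner, i.e. equals $(TT'X)_\alpha$ under the splittings. Conversely, for $T'TY\cong Y$ I would compute, using the triangular shape, $(\Rad TY)_i=0\oplus(\Rad Y)_{i'}$; separatedness of $Y$ at the primed sink $i'$ gives $(\Rad Y)_{i'}=Y_{i'}$, so $(\Rad TY)_i=0\oplus Y_{i'}$. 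Consequently $(T'TY)_{i'}\cong Y_{i'}$ and $(T'TY)_i=(TY/\Rad TY)_i\cong Y_i$, and tracing an arrow's induced map through $\smatrix{0&0\\Y_\alpha&0}$ recovers $Y_{\bar\alpha}$. These identifications assemble into isomorphisms of representations.

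The step I expect to be the main obstacle is showing that $T'X$ is genuinely separated, since it is the only place where $\Rad^2X=0$ is indispensable and where one must be precise about the radical layer and about which vertices of $Q^s$ are sinks: the crux is that passing from $X_\alpha$ to its induced map on the top $X/\Rad X$ does not shrink the image, which is exactly what the vanishing of $\Rad^2X$ guarantees. The remaining content is careful bookkeeping—keeping the chosen vector-space splittings and the identification of the primed and unprimed summands compatible with every arrow map—together with the observation that functoriality upgrades the object-level isomorphisms to the claimed bijection on isomorphism classes.
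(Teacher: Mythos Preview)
The paper does not supply its own proof of this proposition; it is quoted directly from the cited reference, so there is nothing to compare against. Your direct verification is correct and is the standard argument: the strictly lower-triangular shape of $(TY)_\alpha$ forces $\Rad^2 TY=0$; the hypothesis $\Rad^2 X=0$ ensures that each $X_\alpha$ kills $(\Rad X)_i$ and hence that the induced map on the top has the same image as $X_\alpha$, giving separatedness of $T'X$ at the primed sinks; and the two composites are identified with the identity via vector-space splittings and the block description of $(TY)_\alpha$.

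One small caveat worth recording: your sentence ``because every arrow of $Q^s$ enters a primed vertex, the substance of separatedness is\ldots'' tacitly assumes that the only sinks of $Q^s$ are the primed vertices, i.e., that every vertex of $Q$ has at least one outgoing arrow. If $Q$ has a genuine sink $i$, then the unprimed vertex $i$ is isolated in $Q^s$ and is also a sink there, and the separatedness condition at that vertex needs a word. In every application made in this paper the quivers $\hat Q$ and $\hat C_n$ have no sinks, so the point is harmless in context.
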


\section{The category $  K(\Inj A)$ of radical square zero self-injective algebra $A$}

We have known that $ K(\Inj A)$ is compactly generated triangulated category with $K^c(\Inj A)\cong \db A$. Actually, $K(\inj A)$ is a derived invarant \cite[Proposition 8.1]{BK08}.
When we consider the homotopy category $K(\Inj A)$ for $A$ derived discrete, we only need to consider the algebras of the form $L(r,n,m)$, by Proposition \ref{dec1}.
In particular, we consider the quiver $C_n$ as following,
\begin{center}
\begin{tabular}{ccc}
$\xy <1cm,0cm>:
(0,0)*+{1}="1",
(0,1)*+{2}="2",
(1,1.5)*+{3}="3",
(2,1)*+{4}="4",
(2,0)*+{5}="5",
(1,-0.5)*+{n}="6",
(-0.3,0.5)*+{\a}="a1",
(0.5,1.5)*+{\a}="a2",
(1.5,1.5)*+{\a}="a3",
(0.5,-0.5)*+{\a}="a4",

\ar@{->}"1";"2"<0pt>
\ar@{->}"2";"3"<0pt>
\ar@{->}"3";"4"<0pt>
\ar@{->}"4";"5"<0pt>
\ar@{--}"5";"6"<0pt>
\ar@{--}"6";"1"<0pt>


\endxy$
\end{tabular}
\end{center}
with relations $I_n=\a^2$. Set $A_n=kC_n/I_n$.
The algebra of the form $A_n$ is self-injective and $\Rad^2 A_n=0$. There is a characterization of these algebras in \cite[Chapter IV, Proposition 2.16]{ars95}.
\begin{lem}
Let $A$ be a basic self-injective algebra which is not semisimple.
Then $\Rad^2A=0$ if and only if $A\cong A_n$ for some $n$.
\end{lem}

The algebra $A_n=kC_n/I_n$ is derived discrete algebra, since it is the derived discrete algebra $L(n,n,0)$ which occurs in Proposition \ref{dec1}.
Let $I_j, j\in[1,n]$ be the indecomposable injective modules of $A_n$, and $I^{\bullet}$ be the periodic complex with $I_1$ in the degree 0,

\[\xymatrix{\ldots\ar[r]&I_n\ar[r]^{d_n}&I_{n-1}\ar[r]^{d_{n-1}}&I_{n-2}\ar[r]^{d_{n-2}}&\ldots\ar[r]&I_1\ar[r]^{d_1}&I_n\ar[r]&\ldots}\]

where the differential $d_j:I_j\rta I_{j-1}$ is the composition of $I_j\rta I_j/\soc I_j\cong \rad I_{j-1}\hookrightarrow I_{j-1}$.

Define a family of complexes $I_{l,m}=\sg_{\leq m}\sg_{\geq l}I^{\bu}$, where $l\leq m\in \mabb Z\cup \{\pm\infty\}$, where $\sg$ is the brutal truncation functor \cite[p9]{w94}.
In this section, we will prove the following result:
\begin{thm}\label{main1}
 The indecomposable objects in the homotopy category $  K(\Inj A_n)$ are of the form $I_{l,m}[r]$ where $r\in \mabb Z$. Moreover, $I_{l,m}[r]=I_{l',m'}[r']$ if and only if $l'=l+nk, m'=m+nk$ and $r'=r+nk$ where $k\in \mabb Z$.
\end{thm}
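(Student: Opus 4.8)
The plan is to reduce everything to a concrete module-theoretic computation over the separated quiver, using the chain of equivalences developed in the Preliminaries. Recall that $A_n$ is symmetric radical square zero, so the functor $F:K(\Inj A_n)\to\uMod\hat{A_n}$ of Proposition \ref{inj-ebding} is fully faithful and, because $\hat{A_n}$ is again a radical square zero (self-injective) algebra, the stable equivalence $S:\uMod\hat{A_n}\to\uMod\hat{A_n}^s$ of Proposition \ref{sepa} sends us into the module category of a hereditary radical square zero algebra whose quiver is the separated quiver of $\hat{A_n}$. The promised ``new embedding with an explicit expression'' should make $F$ computable on objects: the periodic complex $I^\bullet$ and its brutal truncations $I_{l,m}$ are built from the indecomposable injectives $I_j$ and the canonical maps $I_j\to I_j/\soc I_j\cong\rad I_{j-1}\hookrightarrow I_{j-1}$, and these are precisely the data that survive under $S$. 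So first I would compute the image $S(F(I_{l,m}))$ as an explicit (separated) representation of the separated quiver of $\hat{A_n}$, and identify it as an indecomposable string-type representation.

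The classification then proceeds in three steps. \textbf{Indecomposability:} show each $I_{l,m}$ is indecomposable. Since $K(\Inj A_n)$ has split idempotents (it is the homotopy category of injectives over a finite-dimensional algebra, hence Krull--Schmidt on objects with local endomorphism rings), it suffices to show $\End_{K(\Inj A_n)}(I_{l,m})$ is local; by full faithfulness of $F$ and $S$ this equals the endomorphism ring of the corresponding separated representation, which is a thin/string representation of the separated quiver and hence has local (indeed, in the finite case, one-dimensional-modulo-radical) endomorphism ring. The boundary cases $l$ or $m$ equal to $\pm\infty$ need the corresponding statement for infinite-dimensional representations, which is where the ``endofinite'' claim enters — one checks the representation has finite length over its own endomorphism ring. \textbf{Exhaustiveness:} show every indecomposable object of $K(\Inj A_n)$ is isomorphic to some $I_{l,m}[r]$. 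Using Lemma \ref{devis}, $K(\Inj A_n)$ is generated by its compact objects $\cong\db{A_n}$; the indecomposables of $\db{A_n}=\db{L(n,n,0)}$ are known (they are the ``string'' complexes of derived discrete algebras), and each is a finite truncation of a shift of $I^\bullet$. For a general indecomposable $X$, transport via $F$ and $S$ to a separated representation and invoke the classification of indecomposable (possibly infinite-dimensional) separated representations of the relevant quiver — these quivers are of type $\widetilde{A}$ or infinite linear $A_\infty^\infty$ type, whose indecomposables are classified (all string modules, no bands arise because the relevant separated quiver has no oriented cycle once separated), and each string pulls back to exactly one $I_{l,m}[r]$. \textbf{The identification of isomorphism classes:} compute when $I_{l,m}[r]\cong I_{l',m'}[r']$. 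The periodicity $I^\bullet[n]\cong I^\bullet$ shifted by $n$ gives the relation $l'=l+nk,\ m'=m+nk,\ r'=r+nk$; to see there are no other coincidences, compare homological dimension vectors $h\bdim$ (for the compact ones) and, for the non-compact ones, compare the supports and the ``shape at infinity'' of the associated separated representations, which are complete invariants for strings.

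The main obstacle I anticipate is \textbf{making the functor $F$ explicit enough on the infinite (non-compact) truncations} and controlling the half-infinite and bi-infinite cases $I_{-\infty,m}$, $I_{l,+\infty}$, $I_{-\infty,+\infty}$: Keller--Krause's functor is defined via an acyclic resolution and the equivalence $K_{ac}(\Inj\hat A)\simeq\uMod\hat A$, and tracking an unbounded complex of injectives through this machinery is delicate. The paper's remedy — the alternative embedding for symmetric algebras — is exactly the tool needed, so the real work is (i) verifying that this alternative embedding agrees with $F$ up to natural isomorphism (or at least is also fully faithful with computable image), and (ii) checking that the brutally truncated periodic complexes are sent to genuinely indecomposable, endofinite stable modules rather than decomposing or acquiring a large endomorphism ring ``at infinity''. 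A secondary technical point is ensuring completeness of the list: one must rule out that $K(\Inj A_n)$ contains exotic indecomposables not coming from $\db{A_n}$ under the generation of Lemma \ref{devis} — this should follow from the fact that every object is a homotopy colimit of compacts together with the tameness (string-only) of the separated quiver, but it requires care for infinite-dimensional modules where Krull--Schmidt can fail in general and one must genuinely use endofiniteness.
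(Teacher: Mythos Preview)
Your plan contains a factual error that breaks the argument for $n>1$: the algebra $A_n=kC_n/I_n$ is \emph{not} symmetric when $n>1$. Its Nakayama permutation is the cyclic shift $i\mapsto i+1$, so only $A_1=k[x]/(x^2)$ is symmetric. Consequently the explicit embedding $C(\Inj A)\to\Mod\hat A$ you intend to use is available only for $n=1$; for general $n$ you are thrown back on the Keller--Krause functor of Proposition~\ref{inj-ebding}, whose image, as the introduction already warns, is not known explicitly. Your exhaustiveness step (``each string pulls back to exactly one $I_{l,m}[r]$'') then fails: $F$ is fully faithful but not essentially surjective, so many indecomposable objects of $\uMod\hat{A}_n$ lie outside the image, and without an explicit description of $F$ you cannot decide which thin representations of the separated quiver actually arise from $K(\Inj A_n)$. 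A smaller slip compounds this: $\hat{A}_n$ is not radical square zero (the relation $\alpha\beta=\beta\alpha$ is a nonzero element of $\rad^2$), so Proposition~\ref{sepa} does not apply to $\hat{A}_n$ itself; one must first pass to the factor $\bar{\hat A}_n$ of $\hat A_n$ by its socle.

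The paper's proof avoids exactly this obstruction by a covering argument you did not anticipate. It carries out your programme only for $n=1$ (explicit symmetric embedding, then separated quiver $\hat Q^s=A_\infty^\infty$, yielding Proposition~\ref{dul}), and for general $n$ uses the $\mathbb Z_n$-covering $\pi:(C_n,I_n)\to(C_1,I_1)$. The induced pushdown $F_\lambda:K(\Inj A_n)\to K(\Inj A_1)$ is completely explicit on complexes and is shown to preserve indecomposables (Proposition~\ref{indec}) via the commutative square of Proposition~\ref{ind-0} together with the separated-quiver computation of Lemma~\ref{ind-2}. Once $F_\lambda$ preserves indecomposables, the $n=1$ classification forces every component $X^i$ of an indecomposable $X\in K(\Inj A_n)$ to be a single $I_k$ or zero, and the elementary Hom-computation between the $I_j$ finishes the job. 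So the separated-quiver analysis you propose is indeed the engine, but it is run at $n=1$ and then lifted by covering theory, rather than applied directly at level $n$ where the explicit embedding is unavailable.
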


\subsection{Classification of indecomposable objects of $  K(\Inj k[x]/(x^2))$}
Given an algebra $A$, the trivial extension of  $A$ is  the algebra $T(A)=A\ltimes D(A)=\{(a,\psi)|a\in A,\phi\in D(A)\}$, where the multiplication is given as following:
\[(a,f)(b,g)=(ab,fb+ag).\]
$T(A)$ is a $\mathbb{Z}$-graded algebra, namely as vector space we have $T(A)=(A,0)\oplus (0,D(A))$, and the elements of $A\oplus 0$ (resp.$0\oplus DA$) is of  degree 0 (resp. degree 1).

The trivial extension $T(A)$ of a finite dimensional algebra $A$ is a symmetric algebra. Since there is a symmetric bilinear pairing $\fun{(-,-)}{T(A)\times T(A)}{T(A)}, ((a,f),(b,g))\mapsto f(b)+g(a)$ satisfying associativity.
\begin{rem}
M is a $\mabb Z$-graded T(A)-module if and only if $M\cong \oplus_{i\in \mabb Z} M_i$, satisfies $M_i\in \Mod A$, and there exists a homomorphism $\fun {f_i}{D(A)\otimes_AM_i}{M_{i+1}}$ for any $i\in\mabb Z$. Denote $\Mod^{\mabb Z}T(A)$ the category of $\mabb Z$-graded $T(A)$-modules.
\end{rem}

Given any $M=\oplus_iM_i\in \Mod^{\mabb Z}T(A)$, there is a module $M=(M_i,f_i)\in \Mod \hat{A}$.
Given any $M=(M_i,f_i)\in \Mod \hat{A}$, it corresponds to the module $M=\oplus_iM_i\in \Mod^{\mabb Z}T(A)$.
This correspondence gives an equivalence of these two categories.
\begin{lem}
 Let $A$ be a finite dimensional k-algebra, them there is an equivalence of categories $\Mod \hat{A}\cong \Mod^{\mabb Z}T(A)$.
\end{lem}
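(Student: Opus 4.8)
The plan is to exhibit the equivalence directly on objects and morphisms, observing that both sides are nothing but the same data packaged differently. First I would unwind what a $\mabb Z$-graded $T(A)$-module is: writing $T(A)=A\oplus DA$ with $A$ in degree $0$ and $DA$ in degree $1$, a module $N=\oplus_{i\in\mabb Z}N_i$ has $A$ acting within each graded piece (so each $N_i$ becomes an $A$-module) and the degree-one component $DA$ acting by maps $N_i\to N_{i+1}$, equivalently by $A$-linear maps $g_i\colon DA\otimes_AN_i\to N_{i+1}$. The only relation imposed by the multiplication of $T(A)$ beyond $A$-bilinearity is $DA\cdot DA=0$, which says precisely that $g_{i+1}\circ(1\otimes g_i)=0$ for all $i$. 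This is exactly the defining datum $(M_i,f'_i)_{i\in\mabb Z}$ with $f'_{i+1}\circ(1\otimes f'_i)=0$ of an $\hat A$-module, so I would define $\Phi\colon \Mod\hat A\to\Mod^{\mabb Z}T(A)$ by $(M_i,f'_i)_i\mapsto\oplus_iM_i$, with $(a,0)\in A$ acting componentwise via the given $A$-structures and $(0,\varphi)\in DA$ acting on $m\in M_i$ by $(0,\varphi)\cdot m:=f'_i(\varphi\otimes m)\in M_{i+1}$.

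Second, I would check $\Phi$ is well defined. Associativity of the $T(A)$-action with respect to the $A$-part reduces to the two identities $a\cdot f'_i(\varphi\otimes m)=f'_i(a\varphi\otimes m)$ and $f'_i(\varphi\otimes am)=f'_i(\varphi a\otimes m)$, which hold because $f'_i$ is a morphism of left $A$-modules and $DA\otimes_AM_i$ is balanced over $A$; here one uses the $A$-$A$-bimodule structure on $DA$ recorded before the definition of $\hat A$. The relation $(0,\varphi)(0,\psi)=0$ in $T(A)$ forces $f'_{i+1}(\varphi\otimes f'_i(\psi\otimes m))=0$, which is the hypothesis $f'_{i+1}\circ(1\otimes f'_i)=0$ read off on elements. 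Hence $\Phi(M)$ is a genuine $\mabb Z$-graded $T(A)$-module, and $\Phi$ sends a morphism $h=(h_i)_i$ of $\hat A$-modules to the degree-zero map $\oplus_i h_i$, which is $T(A)$-linear exactly because the squares in the definition of a morphism of $\hat A$-modules commute.

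Third, I would run the construction backwards to obtain $\Psi\colon\Mod^{\mabb Z}T(A)\to\Mod\hat A$: send $N=\oplus_iN_i$ to $(N_i,g'_i)_i$, where $N_i$ carries the $A$-structure coming from degree $0$ and $g'_i\colon DA\otimes_AN_i\to N_{i+1}$, $\varphi\otimes n\mapsto(0,\varphi)\cdot n$, is well defined, $A$-linear, and satisfies $g'_{i+1}\circ(1\otimes g'_i)=0$ by the same computations as above; on morphisms $\Psi$ takes a degree-zero $T(A)$-linear map to its family of graded components. By construction $\Phi$ and $\Psi$ are mutually inverse, each being the identity on the underlying indexed family of vector spaces, so they are quasi-inverse equivalences.

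The only point that needs a little care, rather than a genuine obstacle, is bookkeeping: one must fix the convention that $\hat A$ is the non-unital algebra equipped with the complete set of orthogonal idempotents $\{e_i\}_{i\in\mabb Z}$ given by the units of the copies of $A$, so that ``$\hat A$-module'' means a module $M$ with $M=\oplus_i e_iM$; this is what produces the decomposition $M=(M_i)_i$ in the first place and makes it match the $\mabb Z$-grading on the unital finite-dimensional algebra $T(A)$. Keeping the left versus right $A$-actions on $DA$ straight in the two identities of the second step is the other place to be attentive; everything else is a formal unravelling of the definitions.
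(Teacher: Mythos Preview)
Your proposal is correct and follows the same approach as the paper: both identify the equivalence by observing that the data $(M_i,f'_i)_{i\in\mabb Z}$ defining an $\hat A$-module is literally the same as that of a $\mabb Z$-graded $T(A)$-module, and pass back and forth accordingly. The paper records this correspondence in a single sentence without verifying the module axioms or the compatibility on morphisms; your write-up simply fills in those routine checks (well-definedness of the $T(A)$-action, the role of $DA\cdot DA=0$, the remark on idempotents in the non-unital $\hat A$), so there is no substantive difference in method.
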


We already know the relations between $\db A$ and $\umod \hat{A}$, which extends the embedding $\mod A\rta \umod\hat{A}$. Now if the algebra $A$ is a symmetric algebra, then we can build some special relations between the complex category of $A$-modules $  C(\Mod  A)$ and $\Mod \hat{A}$.

If $A$ is a symmetric algebra, we have that $D(A)\cong A$ as $A$-$A$-bimodules.
Given a complex in $\Mod A$ \[\xymatrix{\ldots\ar[r]^{d_{i-2}}&X^{i-1}\ar[r]^{d_{i-1}}&X^i\ar[r]^{d_i}&X^{i+1}\ar[r]^{d_{i+1}}&\ldots},\]
we can naturally view the complex as a $\mabb Z$-graded $T(A)$-module $\oplus_{i\in\mabb Z}X_i$ with the morphism $\fun {d_i}{A\otimes_{A}X_i}{X_{i+1}}$. Morphisms of complexes correspond to homomorphisms of $\mabb Z$-graded $T(A)$-modules.
Therefore, we have an embedding functor $S:  C(\Mod A)\rta \Mod^{\mabb Z} T(A), (X^i,d^i)\mapsto (X^i,d^i)_i$ and a forgetful functor $\fun U{\Mod^{\mabb Z} T(A)}{  C(\Mod A)}$.
In this case, the functors $S$ and $U$ are equivalences between $  C(\Mod A)$ and $\Mod^{\mabb Z} T(A)$. By the equivalence $\Mod \hat{A}\cong \Mod^{\mabb Z}T(A)$,
we transform the complexes in $\Mod A$ to the $\hat{A}$-modules.
\begin{lem}
 If $A$ is a symmetric algebra, there is an equivalence of categories $  C(\Mod A)\cong \Mod \hat{A}$.
\end{lem}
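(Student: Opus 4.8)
The plan is to factor the asserted equivalence as
\[
C(\Mod A)\ \xrightarrow{\ S\ }\ \Mod^{\mabb Z}T(A)\ \xrightarrow{\ \sim\ }\ \Mod\hat A,
\]
where the second functor is the equivalence of the preceding lemma; thus it suffices to show that the functors $S$ and $U$ between $C(\Mod A)$ and $\Mod^{\mabb Z}T(A)$ described above are mutually quasi-inverse. The first step is a reformulation: since $T(A)=A\oplus D(A)$ is concentrated in degrees $0$ and $1$ and the product of two degree-one elements vanishes, a $\mabb Z$-graded $T(A)$-module is precisely a family $(M_i)_{i\in\mabb Z}$ of $A$-modules together with $A$-linear maps $f_i\colon D(A)\otimes_A M_i\to M_{i+1}$ subject to $f_{i+1}\circ(1\otimes f_i)=0$, and a morphism is a family $(h_i)$ of $A$-maps making the evident squares commute — exactly as for $\hat A$-modules. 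I would record this explicitly.

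Next I would bring in the symmetric hypothesis. Because $A$ is symmetric we may fix an isomorphism $\phi\colon D(A)\xrightarrow{\ \sim\ }A$ of $A$-$A$-bimodules. Composing with $\phi$ and the canonical isomorphism $A\otimes_A M_i\cong M_i$ converts each structure map $f_i$ into an $A$-linear map $d^i\colon M_i\to M_{i+1}$, and this assignment is reversible. Under it the relation $f_{i+1}\circ(1\otimes f_i)=0$ becomes $d^{i+1}\circ d^i=0$: the bimodule isomorphism $D(A)\otimes_A D(A)\cong A$ induced by $\phi$ identifies $1\otimes f_i$ with $d^i$ once the redundant tensor factor is cancelled. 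Hence a $\mabb Z$-graded $T(A)$-module is the same datum as a cochain complex $(M^i,d^i)$ of $A$-modules, and the commuting-square condition for a morphism of graded $T(A)$-modules translates, under the same identifications, into the condition that $(h_i)$ be a chain map. Therefore $S$ is full and faithful, $U$ sends a graded $T(A)$-module to the complex thereby obtained, and $S$ and $U$ are mutually quasi-inverse; composing with $\Mod^{\mabb Z}T(A)\cong\Mod\hat A$ yields the equivalence $C(\Mod A)\cong\Mod\hat A$.

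The one point that genuinely needs care — and where the symmetric assumption, rather than mere Frobenius-ness, is used — is the compatibility of the chosen isomorphism $\phi$ with the tensor-cancellation isomorphisms, so that $f_{i+1}\circ(1\otimes f_i)=0$ corresponds to the \emph{untwisted} relation $d^{i+1}\circ d^i=0$. For a general Frobenius algebra one only has $D(A)\cong{}_1A_\nu$ for a Nakayama automorphism $\nu$, and one would instead land in a category of "$\nu$-twisted complexes"; it is precisely the fact that $\phi$ can be chosen as a two-sided bimodule isomorphism that kills this twist. I would verify this compatibility by a direct degree-by-degree diagram chase, which is routine once $\phi$ is fixed.
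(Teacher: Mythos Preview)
Your proof is correct and follows essentially the same route as the paper: factor through $\Mod^{\mabb Z}T(A)$, use the bimodule isomorphism $D(A)\cong A$ coming from symmetry to identify the structure maps $f_i\colon D(A)\otimes_A M_i\to M_{i+1}$ with differentials $d^i\colon M_i\to M_{i+1}$, and then invoke the preceding lemma. The paper's argument is the informal paragraph just before the statement and does not spell out the Nakayama-twist issue you raise; your added remark on why symmetric rather than merely Frobenius is needed is a genuine clarification, but the underlying strategy is identical.
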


For any algebra $A$, the category $\Mod \hat{A}$ is a Frobenius category and the complex category $  C(\Inj A)$ with the set of all degree-wise split exact sequences in $  C(\Inj A)$
is also a Frobenius category.  All indecomposable projective-injective objects in $  C(\Inj A)$ are complexes of the form \[\xymatrix{0\ar[r]&I\ar[r]^{id}&I\ar[r]&0}\] where $I$ is an indecomposable injective
$A$-module.
 The associated stable categories are $\uMod \hat{A}$ and $  K(\Inj A)$ respectively. If $A$ is a symmetric algebra, then $  C(\Inj A)$ is a full exact subcategory of $\Mod \hat{A}$, i.e. $  C(\Inj A)$ is a full subcategory
of $\Mod \hat{A}$ and closed under extensions.

\begin{lem}\label{ebd0}
 Let $A$ be a symmetric algebra, the equivalence $  C(\Mod A)\rta \Mod\hat{A}$ restrict to the embedding $\Psi:  C(\Inj A)\rta \Mod \hat{A}$ is an exact functor. Moreover, the embedding induces a bijection between the indecomposable projective-injective objects
in $  C(\Inj A)$ and $\Mod\hat{A}$.
\end{lem}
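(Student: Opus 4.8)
The plan is to leverage the previously established equivalence $\Phi\colon C(\Mod A)\xrightarrow{\sim}\Mod\hat A$ (obtained as the composite of $S\colon C(\Mod A)\to\Mod^{\mabb Z}T(A)$ with the equivalence $\Mod^{\mabb Z}T(A)\cong\Mod\hat A$) and simply check that restricting it to the subcategory $C(\Inj A)$ behaves as claimed. First I would recall that $C(\Inj A)$, as a subcategory of $C(\Mod A)$, is a full subcategory closed under extensions, hence inherits an exact structure (the degreewise split exact sequences); this was already noted in the text. Under $\Phi$ a complex $(X^i,d^i)$ of injectives is sent to the $\hat A$-module $(X^i,d^i)_{i}$ whose components $X^i$ are injective $A$-modules — by the symmetry $D(A)\cong A$ this is precisely the shape of a module in the image, so $\Phi$ does land inside $\Mod\hat A$ (it has to, since $\Phi$ is an equivalence onto all of $\Mod\hat A$), and the point worth verifying is exactness of the restriction.

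The key steps, in order: (1) Note that a conflation in $C(\Inj A)$ is by definition a sequence $0\to X^{\bu}\to Y^{\bu}\to Z^{\bu}\to 0$ of complexes of injectives that is split exact in each degree. (2) Apply $\Phi$; since $\Phi$ is additive and an equivalence, it carries this to a sequence $0\to\Phi X^{\bu}\to\Phi Y^{\bu}\to\Phi Z^{\bu}\to 0$ of $\hat A$-modules which is, degreewise (i.e. on each underlying $A$-module component $X^i\to Y^i\to Z^i$), a split short exact sequence of $A$-modules. (3) Observe that such a sequence is automatically a short exact — indeed admissible, i.e. a conflation — sequence in the Frobenius category $\Mod\hat A$: exactness of $\hat A$-modules is checked on underlying $k$-spaces, and degreewise splitness in $C(\Mod A)$ is exactly the condition defining conflations in $\Mod\hat A$ under the identification $C(\Mod A)\cong\Mod\hat A$. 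Conversely, $\Phi^{-1}$ carries conflations of $\Mod\hat A$ landing in the image of $C(\Inj A)$ back to degreewise split sequences of complexes of injectives, so $\Phi|_{C(\Inj A)}$ both preserves and reflects conflations; that is what "exact functor" means here. (4) For the last assertion, recall from the text that the indecomposable projective–injective objects of $C(\Inj A)$ are exactly the contractible complexes $0\to I\xrightarrow{\mathrm{id}}I\to 0$ with $I$ indecomposable injective, while those of $\Mod\hat A$ are the indecomposable summands of $\hat A$ itself. One then checks directly that $\Phi$ sends $0\to I\xrightarrow{\mathrm{id}}I\to 0$ to an indecomposable projective–injective $\hat A$-module and that every indecomposable projective–injective $\hat A$-module arises this way: projective–injective objects are the images of the projective–injectives under an exact equivalence-onto-a-subcategory, so $\Phi$ induces an injection on isoclasses of indecomposable projective–injectives, and a counting/comparison with the known projective–injective $\hat A$-modules (parametrized, like the injective $A$-modules, by the vertices together with a shift) shows it is a bijection.

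The main obstacle is step (3)–(4): making precise that "exact functor between Frobenius (or exact) categories" is the right notion and that the projective–injective objects really do match up on the nose, rather than merely "$\Phi$ restricted to $C(\Inj A)$ is a fully faithful exact embedding whose image happens to contain all projective–injectives." Concretely, one must be careful that $C(\Inj A)$ sitting inside $\Mod\hat A$ is closed under extensions (so that the induced exact structure on $C(\Inj A)$ agrees with the degreewise-split one), which uses symmetry of $A$ in an essential way — for general $A$ a complex of injectives need not be closed under extension inside $\Mod\hat A$ because $D(A)\not\cong A$. Once symmetry is invoked to get $D(A)\cong A$ as bimodules, the identification $C(\Mod A)\cong\Mod\hat A$ is literally an equivalence and everything else is bookkeeping: full faithfulness is inherited, exactness is the degreewise-split observation above, and the projective–injective correspondence is read off from the explicit descriptions on both sides.
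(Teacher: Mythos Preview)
Your proposal is correct and follows essentially the same approach as the paper: verify that $C(\Inj A)$ is a full extension-closed subcategory of $\Mod\hat A$ (hence the restriction is exact), then match the projective--injective objects on both sides. The only substantive difference is in step~(4): where you gesture at ``a counting/comparison,'' the paper carries out the computation explicitly --- it lists the primitive idempotents $\mathcal E_i(e_j)$ of $\hat A$, writes the indecomposable projective--injective $\hat A$-modules as $\hat A\mathcal E_i(e_j)\cong Ae_j\oplus D(A)e_j$ with structure map the identity, and then invokes symmetry $D(A)\cong A$ to recognize this as precisely the image under $\Psi$ of $0\to Ae_j\xrightarrow{\mathrm{id}}Ae_j\to 0$.
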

\begin{proof}
To show the embedding is an exact functor, it suffices to show $  C(\Inj A)$ is a full exact subcategory of $\Mod \hat{A}$. It is obvious that $  C(\Inj A)$ is closed under extension and the conflations of $\Mod\hat{A}$
with terms in $  C(\Inj A)$ split. Thus the embedding is a fully faithful exact functor.

Now, we consider projective-injective modules in $\Mod \hat{A}\cong \Mod^{\mabb Z} T(A)$. Let $e_1,e_2,\ldots,e_n$
be the primitive idempotents of $A$, and $1=\sum^{n}_{i=0}e_i$ be the unit of $A$.
Let $\mac E_i(e_j)$ be the 'matrix' with $(i,i)$ position is $e_j$, and the other positions are 0.
Then $\{\mac E_i(e_j)\}_{i\in\mabb Z,1\leq j\leq n}$ are all primitive idempotents of $\hat{A}$.

All indecomposable projective-injective $\hat{A}$-modules are of form
$$\hat{P}_i=\hat{A}\mac E_i(e_j)\cong Ae_j\oplus D(A)e_j $$
with $f=id:D(A)\otimes Ae_j\rta D(A)e_j$.
Since $A$ is a symmetric algebra, every indecomposable projective-injective module in $\Mod \hat{A}$ is of the form
\[\cdots 0\sim Ae_j \sim^{1_{Ae_j}}Ae_j\sim0\cdots.\]

The indecomposable projective-injective (associated with the exact structure) objects in $  C(\Inj A)$ are of form
\[\xymatrix{0\ar[r]&Ae_j\ar[r]^{id}&Ae_j\ar[r]&0},\] for some j. Thus there is a natural bijection induced by the embedding $\Psi:  C(\Inj A)\rta \Mod \hat{A}$ between the indecomposable projective-injective modules in $\Mod \hat{A}$
and $  C(\Inj A)$.
\end{proof}
By the lemma, we have that the exact embedding functor $\Psi:  C(\Inj A)\rta   \Mod \hat{A}$ induces an additive functor $\Phi:  K(\Inj A)\rta \uMod \hat{A}$, moreover, $\Phi$ is a triangle functor.

\begin{prop}\label{ebd1}
If $A$ is a symmetric algebra, then there is a fully faithful triangle functor $\Phi:  K(\Inj A)\rta \uMod\hat{A}$ induced by the exact embedding $\Psi:C(\Inj A)\rta \Mod \hat{A}$.
\end{prop}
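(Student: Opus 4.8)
The plan is to upgrade the exact embedding $\Psi\colon C(\Inj A)\to\Mod\hat A$ of Lemma \ref{ebd0} to a functor between the associated stable categories and then verify it is both a triangle functor and fully faithful. First I would recall the general mechanism: if $\mac E\subseteq\mac F$ is a full exact subcategory of a Frobenius category $\mac F$ which is itself Frobenius, and if the projective-injective objects of $\mac E$ are exactly the projective-injective objects of $\mac F$ lying in $\mac E$ (which is precisely the content of the bijection in Lemma \ref{ebd0}), then the inclusion descends to a well-defined additive functor $\underline{\mac E}\to\underline{\mac F}$ on stable categories: a morphism factoring through a projective-injective of $\mac E$ also factors through one of $\mac F$, so the inclusion sends the ideal of such maps into the corresponding ideal. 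Applying this with $\mac E=C(\Inj A)$ (with its degreewise-split exact structure) and $\mac F=\Mod\hat A$ gives the additive functor $\Phi\colon K(\Inj A)\to\uMod\hat A$, as already remarked after Lemma \ref{ebd0}.

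Next I would check that $\Phi$ is a triangle functor. The suspension in both stable categories is computed by choosing, for an object $X$, a conflation $X\rightarrowtail P\twoheadrightarrow \Sigma X$ with $P$ projective-injective; triangles are the images of conflations. Since $\Psi$ is exact (Lemma \ref{ebd0}) it sends a conflation in $C(\Inj A)$ to a conflation in $\Mod\hat A$, and since it matches projective-injectives it sends an ``$X\rightarrowtail P\twoheadrightarrow\Sigma X$'' witness in $C(\Inj A)$ to such a witness in $\Mod\hat A$; this gives a canonical natural isomorphism $\Phi\circ\Sigma_{K(\Inj A)}\cong\Sigma_{\uMod\hat A}\circ\Phi$ compatible with the connecting maps, so distinguished triangles go to distinguished triangles. (Here one uses that in the homotopy category $K(\Inj A)$ the standard triangles — mapping cones of degreewise-split injections — coincide with those coming from the Frobenius structure on $C(\Inj A)$, which is the classical identification of $K$ of a complex category as a stable category.)

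Finally, full faithfulness. I would argue that $\Phi$ is full and faithful by reducing to a Hom computation. Faithfulness: a chain map $f\colon X\to Y$ in $C(\Inj A)$ with $\Phi f=0$ in $\uMod\hat A$ means $\Psi f$ factors through a projective-injective $\hat A$-module; by the bijection of Lemma \ref{ebd0} and fullness of the exact embedding $\Psi$ on the relevant Hom-groups, such a factorization can be taken with the middle term in $C(\Inj A)$, i.e. $f$ factors through a contractible complex, so $f=0$ in $K(\Inj A)$. Fullness: given a morphism $\bar g\colon\Phi X\to\Phi Y$ in $\uMod\hat A$, lift it to $g\colon\Psi X\to\Psi Y$ in $\Mod\hat A$; since $\Psi$ is full its image lies in $C(\Inj A)$ already only if $g$ itself is in the image — here the point is rather that $C(\Inj A)$ is a \emph{full} subcategory, so $g=\Psi f$ for a unique chain map $f$, and $\Phi f=\bar g$. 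The one genuine thing to verify is that the stable Hom-groups are computed the same way on both sides, i.e. that the maps one quotients by agree; this follows once more from the matching of projective-injectives. I expect the main obstacle to be exactly this last bookkeeping point: showing that a morphism $\Psi X\to\Psi Y$ factoring through \emph{some} injective $\hat A$-module can be rechosen to factor through an object of $C(\Inj A)$, i.e. that the two ideals of ``stably trivial'' maps on $\Hom_{C(\Inj A)}(X,Y)$ — the one from contractible complexes and the one induced from $\Mod\hat A$ — coincide. This is where the bijection of Lemma \ref{ebd0} between the two classes of indecomposable projective-injectives does the real work; everything else is the formal ``exact inclusion inducing a triangle functor on stable categories'' package.
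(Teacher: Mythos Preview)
Your proposal is correct and follows essentially the same strategy as the paper: pass from the exact embedding $\Psi$ to the induced functor on stable categories using that $\Psi$ preserves projective-injectives (Lemma \ref{ebd0}), then check compatibility with the suspension via the standard conflation $0\to X\to I(X)\to\Sigma X\to 0$. The paper compresses the ``exact functor preserving projective-injectives induces a triangle functor on stable categories'' step into a single citation of \cite[Chapter 1, Lemma 2.8]{ha88}, whereas you unpack this mechanism by hand; the content is the same.

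One point worth noting: the paper's proof does not explicitly argue full faithfulness at all, treating it as implicit in the fact that $\Psi$ is fully faithful and induces a bijection on projective-injectives. You are more careful here, and rightly identify the only nontrivial step as showing that the two ideals of ``stably trivial'' maps on $\Hom_{C(\Inj A)}(X,Y)$ coincide. Your sketch of this is fine; to make it airtight you can observe that if $\Psi f$ factors through some injective $\hat A$-module, then it factors through \emph{any} inflation $\Psi X\rightarrowtail I$ with $I$ injective in $\Mod\hat A$; taking $I=\Psi(I(X))$ for the $C(\Inj A)$-inflation $X\rightarrowtail I(X)$ (which is an inflation in $\Mod\hat A$ since $\Psi$ is exact and $\Psi(I(X))$ is projective-injective by Lemma \ref{ebd0}), fullness of $\Psi$ pulls the factorization back to $C(\Inj A)$. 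So your expected obstacle dissolves exactly as you anticipated.
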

 \begin{proof}
 Firstly, the embedding preserves projective-injective objects by Lemma \ref{ebd0}. We only need to show that there exists an invertible natural transformation
 $\a:\Psi \Sg\rta \Sg\Psi$ by \cite[Chapter 1, Lemma  2.8]{ha88}. For any object $X\in   C(\Inj A)$, there is a exact sequence $0\rta X\rta I(X)\rta \Sg X\rta 0$. By the fact $\Psi(X)\cong X$ and $\Psi(I(X))\cong I(\Psi(X))$, it is natural that
 $\Psi \Sg X\rta \Sg\Psi(X)$.
 \end{proof}

When we consider the algebra $\La= k[x]/(x^2)$, the embedding $\Psi:C(\Inj \La)\rta\Mod\hat{\La}$ induces a fully faithful triangle functor $\Phi:  K(\Inj\La)\rta\uMod \hat{\La}$, since it is a symmetric algebra.
In the following, we will show that the relation between indecomposable objects of $  K(\Inj \La)$ and radical square zero representations of quiver $\hat{Q}$ of $\hat{\La}$, and determine all indecomposable objects in $  K(\Inj \La)$.

The algebra $\La$ is the path algebra of the quiver $Q=\xymatrix{\circ\ar@(ur,dr)^{\a}}$ with the relation $\a^2=0$.
The quiver of the repetitive algebra $\hat{\La}$ of $\La$ is $\hat{Q}$
\[\xymatrix{\ldots\ar[r]^{\b}&\circ\ar@(lu,ru)^{\a}\ar[r]^{\b}&\circ\ar@(lu,ru)^{\a}\ar[r]^{\b}&\ldots}\]
with relations $\a^2=0=\b^2,\a\b=\b\a$.

\begin{prop}\label{rad2}
Let $\La=k[x]/(x^2)$, as above, $\hat{Q}$ be the quiver of the repetitive algebra $\hat{\La}$.
The image of indecomposable objects in $  K(\Inj \La)$ under $\Phi$ can be expressed as radical square zero representations of $\hat{Q}$.
\end{prop}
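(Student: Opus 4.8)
The plan is to translate an object of $K(\Inj\La)$ explicitly through the chain of identifications already assembled in the excerpt and then identify the resulting $\hat\La$-module as a representation of $\hat Q$ that is killed by $\Rad^2$. First I would recall that $\La=k[x]/(x^2)$ has a unique indecomposable injective (= projective) module, namely $\La$ itself, which is $2$-dimensional with basis $\{1,x\}$. Hence an object of $C(\Inj\La)$ is a complex
\[\xymatrix{\cdots\ar[r]&\La^{a_{i-1}}\ar[r]^{d_{i-1}}&\La^{a_i}\ar[r]^{d_i}&\La^{a_{i+1}}\ar[r]&\cdots},\]
and under the equivalence $\Psi\colon C(\Inj\La)\rta\Mod\hat\La\cong\Mod^{\mabb Z}T(\La)$ of Lemma~\ref{ebd0} this complex becomes the graded $T(\La)$-module $\bigoplus_i\La^{a_i}$ with the degree-one structure maps $d_i\colon\La\otimes_\La\La^{a_i}\to\La^{a_{i+1}}$ (using $D(\La)\cong\La$). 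Unwinding the quiver $\hat Q$ of $\hat\La$, the loop $\a$ at the $i$-th vertex acts by multiplication by $x$ on each copy of $\La$, while the arrow $\b$ from vertex $i$ to vertex $i+1$ acts by $d_i$; the relations $\a^2=0=\b^2$, $\a\b=\b\a$ are exactly $x^2=0$ on $\La$, $d_{i+1}d_i=0$ (the complex condition), and $d_i(x\cdot-)=x\cdot d_i(-)$ ($\La$-linearity of $d_i$). So every object of $C(\Inj\La)$ maps to a genuine representation of $\hat Q$ with relations.

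The next step is to pass to the stable category and show we may choose a representative on which $\Rad^2$ vanishes. An indecomposable object $X$ of $K(\Inj\La)$ is an indecomposable object of $C(\Inj\La)$ modulo the projective-injectives, which by Lemma~\ref{ebd0} are precisely the contractible complexes $0\to\La\xrightarrow{\mathrm{id}}\La\to0$. I would argue that, after deleting such contractible direct summands, one can normalize the differentials: since $\La$ is local with radical $(x)$ and $x^2=0$, each $\La$-linear map $d_i\colon\La^{a_i}\to\La^{a_{i+1}}$ decomposes, over suitable bases, into a block that is an isomorphism (which splits off a contractible summand and is discarded) plus a block landing inside $x\La^{a_{i+1}}=\soc$. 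After this reduction every $d_i$ has image in the socle and kills the socle, so the composite of any two consecutive arrows among $\{\a,\b\}$ is zero on the reduced representation: $\a^2=0$ and $\b^2=0$ already, $\a\b$ and $\b\a$ now vanish because $d_i$ lands in $\soc$, and $\a$ acting twice or $\b\a$, $\a\b$ all factor through $\soc$ which $\a$ and the $d_j$ annihilate. Hence $\Rad^2=0$ on the normalized representation, i.e. the image of an indecomposable object of $K(\Inj\La)$ under $\Phi$ is (isomorphic to) a radical square zero representation of $\hat Q$.

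Finally I would record that conversely every indecomposable radical square zero representation of $\hat Q$, regarded via the equivalences $\Mod^{\mabb Z}T(\La)\cong\Mod\hat\La$ and $C(\Inj\La)\cong\Mod\hat\La$ (Lemma~\ref{ebd0}, Proposition~\ref{ebd1}), comes from a complex of injective $\La$-modules, so that $\Phi$ induces a bijection between isomorphism classes of indecomposables in $K(\Inj\La)$ and isomorphism classes of indecomposable radical square zero representations of $\hat Q$ — at which point the classification of the latter (via the separated quiver $\hat Q^{\,s}$ and Proposition~\ref{rad3}, to be carried out in the sequel) yields the list of indecomposables. The main obstacle I anticipate is the normalization step: one must argue carefully that stripping off the "isomorphism part" of each differential really corresponds to removing a projective-injective (contractible) direct summand in $C(\Inj\La)$ uniformly in $i$ for an unbounded complex, so that the reduction is compatible with passing to the stable category and does not change the isomorphism type in $K(\Inj\La)$; the bookkeeping of bases across infinitely many degrees, together with keeping track of the $\a$-action, is where the real work lies, whereas the verification of the relations is routine.
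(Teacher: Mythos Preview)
Your argument for the proposition itself is correct and follows the same line as the paper: reduce to a representative whose differentials have all matrix entries in $\Rad\Hom_\La(\La,\La)=(x)$, and then observe that the resulting $\hat\La$-module is annihilated by the square of the arrow ideal. The paper makes the reduction step explicit by an idempotent splitting (if some entry equals $1_\La$ one writes down maps $f,g$ with $gf=\mathrm{id}$ on the contractible complex $0\to\La\xrightarrow{1}\La\to0$ and uses idempotent completeness of $K(\Inj\La)$), and then phrases the conclusion via the factor algebra $\bar{\hat\La}=\hat\La/\soc\hat\La$, whose relations are $\a^2=\b^2=\a\b=\b\a=0$; your direct verification that $\a\b$ and $\b\a$ vanish once each $d_i$ lands in the socle is the same computation, and your remark that the bookkeeping across infinitely many degrees is the only delicate point is well taken (the paper glosses over this too).

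Your final paragraph, however, overreaches and is not correct as stated. The functors $\Psi\colon C(\Inj\La)\to\Mod\hat\La$ and $\Phi\colon K(\Inj\La)\to\uMod\hat\La$ are only fully faithful embeddings, not equivalences; it is $C(\Mod\La)\cong\Mod\hat\La$ that is an equivalence. In particular not every indecomposable radical square zero representation of $\hat Q$ arises from a complex of injectives: the simple module $S_i$ at a vertex of $\hat Q$ is radical square zero, but under $C(\Mod\La)\cong\Mod\hat\La$ it corresponds to a stalk complex with the one-dimensional $\La$-module $k$, which is not in $C(\Inj\La)$ and is not stably isomorphic to any $\Phi(X)$ (all such images have even $k$-dimension). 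So $\Phi$ does not give the bijection you assert. The paper never claims this; instead it composes with the separated-quiver correspondence of Proposition~\ref{rad3} to pass to representations of $\hat Q^s\cong\aif$ and classifies those, which is how the list of indecomposables is eventually obtained.
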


\begin{proof}

The objects $X$ in $  K(\Inj \La)$ are of form,\[\xymatrix{\ldots\ar[r]&\La^{m_{-1}}\ar[r]^{d^{-1}}&\La^{m_0}\ar[r]^{d^0}&\La^{m_1}\ar[r]^{d^1}&\La^{m_2}\ar[r]^{d^2}&\ldots}\]

where all $d^i\in\Hom_{\La}(\La^{m_i},\La^{m_{i+1}})$ and satisfy $d^{i+1}\cdot d^i=0$. The differential $d^i$ can be expressed as a $m_{i+1}\times m_i$ matrix $(d^i_{jk})$ with entries in $\Hom_{\La}(\La,\La)$
if $m_{i+1},m_i$ are both finite.
We have that $\dim_k \Hom_{\La}(\La,\La)=2$, so we can choose a basis $\{1,x\}$ of $\Hom_{\La}(\La,\La)$.

In particular, if the complex $X\in   K(\Inj \La)$ is indecomposable, we can choose that every entry $d^i_{jk}$ of the matrix $(d^i_{jk})$ associated to $d^i$ is in $\Rad(\La,\La)$.
Assume that there is a component of morphism $d^i_{jk}:\La\rta\La$ with $d^i_{jk}\notin \Rad(\La,\La)\subset \Hom_{\La}(\La,\La)$. Without loss of generality, let $d^0_{jk}=1_{\La}$.
Consider the following morphisms of complexes in $  K(\Inj A)$
\[\xymatrix{\ldots\ar[r]&\La^{m_{-1}}\ar[d]^0_{g:}\ar[r]^{d^{-1}}&\La^{m_0}\ar[r]^{d^0}\ar[d]^{g_0}&\La^{m_1}\ar[r]^{d^1}\ar[d]^{g_1}&\La^{m_2}\ar[d]^0\ar[r]^{d^2}&\ldots\\
Y:\ldots\ar[r]&0\ar[r]\ar[d]_{f:}^0&\La\ar[d]^{f_0}\ar[r]^1&\La\ar[d]^{f_1}\ar[r]^{d^1}&0\ar[r]\ar[d]^0&\ldots\\
\ldots\ar[r]&\La^{m_{-1}}\ar[r]^{d^{-1}}&\La^{m_0}\ar[r]^{d^0}&\La^{m_1}\ar[r]^{d^1}&\La^{m_2}\ar[r]^{d^2}&\ldots},\]
where $g_0$ is the $k$-th row of $d^0$, $g_1$ is the canonical projection on the $j$-th component, $f_0$ is the embedding to the $k$-th component and $f_1$ is the $j$-th column of $d^0$.
We can check that the morphism $fg:X\rta X$ is idempotent, and $gf=id_Y$. Thus $fg$ splits in $  K(\Inj \La)$ since $  K(\Inj \La)$ is idempotent complete. That means the complex $X$ has a direct summand of form $Y$ which
is null-homotopic.

 We know that $\La$ as a $\La$-module corresponds the quiver representation
$\xymatrix{k^2\ar@(ur,dr)^{\smatrix{0&1\\0&0}}}$, thus we assign the homomorphism $x$ to the morphism of representations
\[\fun {\smatrix{0&1\\0&0}}{\xymatrix{k^2\ar@(lu,ru)^{\smatrix{0&1\\0&0}}}}{\xymatrix{k^2\ar@(lu,ru)^{\smatrix{0&1\\0&0}}}}.\]
Under the embedding functor $\Phi:  K(\Inj \La)\rta \uMod \hat{\La}$ as in Proposition \ref{ebd1},
 the complex $\xymatrix{0\ar[r]&\La\ar[r]^1&\La\ar[r]&0}$ corresponds to the following
representation of $\hat{\La}$
 \[\xymatrix{k^2\ar@(lu,ru)^{\smatrix{0&1\\0&0}}\ar[r]^{id}&k^2\ar@(lu,ru)^{\smatrix{0&1\\0&0}}};\]
  which is a projective-injective $\hat{\La}$-module. Let $\bar{\hat{\La}}$ be the factor algebra
of $\hat{\La}$ modulo its socle \cite{ri97}. The algebra $\bar{\hat{\La}}$ has quiver $\hat{Q}$ and with relations $\a^2=0=\b^2$ and $\a\b=0=\b\a$. Every indecomposable $\hat{\La}$-module without projective summand could be expressed as an indecomposable $\bar{\hat{\La}}$-module.
Thus for any indecomposable complex
 \[X':\xymatrix{\ldots\ar[r]&\La^{m_0}\ar[r]^{d^0}&\La^{m_1}\ar[r]^{d^1}&\La^{m_2}\ar[r]^{d^2}&\ldots}\]  $\Phi(X')\in \uMod \hat{\La}$ could be expressed as an indecomposable
 $\bar{\hat{\La}}$-module.
It naturally corresponds to a radical square zero representation of $\hat{Q}$.
\end{proof}
From the quiver $\hat{Q}$ of $\hat{\La}$, we know that the separated quiver $\hat{Q}^s$ is of type $\aif$ with the following orientation
\[\xymatrix{\ldots\ar[r]&1'&1\ar[l]\ar[r]&2'&2\ar[l]\ar[r]&\ldots}.\]
We denote this quiver by $\aif$.

The representations of $\aif$ are known for experts. For the convenience of readers, we summarize the result in the following proposition.
\begin{prop}\label{reps}
Any indecomposable representation of $\aif$ over an algebraically closed field $k$ is thin.
More precisely, all indecomposable representations are of the form $k_{ab},a,b\in
\mathbb{Z}\cup \{+\infty,-\infty\}$, where
\[k_{ab}(i)=\begin{cases} k &\text{if }a\leq i\leq b,\\ 0&\text{otherwise.}\end{cases}\]
                                                         and
\[k_{ab}(\alpha)=\begin{cases}
id_k &\text{if }a\leq s(\alpha),t(\alpha)\leq b, \\
0 & \text{otherwise.}\end{cases}\]
\end{prop}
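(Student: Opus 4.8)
The plan is to deduce the statement from Gabriel's theorem for Dynkin quivers of type $A$, after a reduction to connected support. First I record two easy remarks. Since the primed vertices of $\aif$ are sinks and the unprimed ones are sources, every oriented path has length at most one, so a representation is nothing but a vector space at each vertex together with one linear map per arrow, with no relations to satisfy. In particular each $k_{ab}$ has $\End(k_{ab})\cong k$ (a scalar at one vertex propagates along the whole interval), hence is indecomposable, and the $k_{ab}$ are pairwise non-isomorphic since they have distinct supports. Thus the real content is that \emph{every} indecomposable $V$ is isomorphic to some $k_{ab}$.

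Next I reduce to the case in which the support $\{v:V(v)\neq 0\}$ is connected in the underlying graph of $\aif$: otherwise $V$ splits as the direct sum of its restrictions to the connected components. A connected subset of the vertex line is an interval $[a,b]$ with $a,b\in\mabb Z\cup\{\pm\infty\}$. If $[a,b]$ is finite, then $V$ is the extension by zero of a representation of the full subquiver supported on $[a,b]$, which is a Dynkin quiver of type $A_m$ with a zigzag orientation; moreover $V$ is indecomposable over this subquiver as well, since that category is equivalent to the full subcategory of representations of $\aif$ supported within $[a,b]$ and the latter is closed under summands. By Gabriel's theorem the indecomposable representations of a type $A_m$ quiver are exactly the thin interval modules (corresponding to the positive roots $e_c+e_{c+1}+\cdots+e_d$), whence $V\cong k_{ab}$.

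It remains to handle the case where $[a,b]$ is infinite. Here I would exhibit $V$ as a directed union of subrepresentations of finite support: choosing finite subintervals $J$ of $[a,b]$ that exhaust it and whose endpoints are sinks of $\aif$, the family of spaces $V(v)$ for $v\in J$ (zero elsewhere) is a genuine subrepresentation $V|_J$, because no arrow leaves a sink, and $V=\bigcup_J V|_J$. Each $V|_J$ decomposes into interval modules by the finite case already settled. Using that $\End(V)$ is local together with the compatibility of these finite decompositions as $J$ grows, one pins down a single interval so that $V\cong k_{ab}$; equivalently, one may invoke the known classification of indecomposable representations of the path algebra of a tree, for which the string modules are precisely the interval modules and there are no band modules (the underlying graph of $\aif$ is a tree). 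A final check that the half-infinite and bi-infinite modules $k_{ab}$ are themselves indecomposable (again $\End=k$) finishes the classification, and ``thin'' is immediate since every $k_{ab}$ manifestly has dimension vector with entries in $\{0,1\}$.

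The step I expect to be the main obstacle is the infinite-support case: the restriction of an indecomposable to a finite subinterval is typically decomposable, so one must carefully extract a coherent interval from the directed system of finite decompositions (or, alternatively, locate and quote the precise reference for infinite-dimensional representations of $\aif$). By contrast, the finite-support case is a routine application of Gabriel's theorem, and the translation between the modules $k_{ab}$ and intervals of the vertex line is immediate once one fixes the $\mabb Z$-indexing of the vertices of $\aif$.
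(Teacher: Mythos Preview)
Your argument for the finite-support case is sound: restricting to a finite subinterval gives a representation of a type $A_m$ zigzag, and over such a quiver every module (not just the finite-dimensional ones) is a direct sum of interval modules; this is a legitimate shortcut compared with the paper's bare-hands argument. For the infinite-support case, however, your proposal is not a proof. You write $V=\bigcup_J V|_J$ and decompose each $V|_J$ into intervals, but you give no mechanism for assembling these finite decompositions into a global one; as you yourself note, the restriction of an indecomposable to a finite window is typically decomposable, and nothing in your sketch controls how the pieces match up as $J$ grows. Your appeal to ``$\End(V)$ is local'' is not available: for infinite-dimensional modules, indecomposability does not imply a local endomorphism ring, so you would be assuming part of the conclusion. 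The alternative you mention, the string/band classification for trees, is in its standard form a statement about finite-dimensional modules and is exactly what is at stake here in the infinite setting.

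The paper avoids this difficulty by arguing directly and uniformly, without separating the finite and infinite cases. It shows that if some structure map $f_i$ fails to be bijective between nonzero spaces, then one can split off a nontrivial summand of $V$: e.g.\ if $f_0$ is not injective, one decomposes $V_1=\Im f_0\oplus\Ker f_0$, chooses a compatible splitting of $V_2$, and exhibits an explicit nonzero direct summand supported to the right of vertex $0$. Once every nonzero $f_i$ is a bijection, an explicit change of basis identifies $V$ with some $k_{ab}$. This linear-algebra argument requires no local-endomorphism-ring hypothesis and works regardless of whether the support is bounded. If you want to keep your Gabriel-based treatment of the finite case, the cleanest fix is to replace your colimit step by this direct ``non-bijective map forces a summand'' argument, which settles the infinite case in one stroke.
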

\begin{proof}
First, we have that any $k_{ab}$ is indecomposable. If a=b,
obviously $k_{aa}$ is a simple representation and indecomposable,
denoted by $k_a$.

If $a\neq b$, we assume that $a<b$. If $k_{ab}=V\oplus V'$, where
$V$ and $V'$ are nonzero. Then $(suppV)_0\cap (suppV')_0=\emptyset$. There
exists a vertex $i\in (suppV)_0$, and $i+1$ or $i-1\in (suppV')_0$. We have
that $V_{i}=k,V_{i+1}=0,V'(i)=0,V'_{i+1}=k$, and the map $V_{i}\oplus
V'_{i}\rightarrow V_{i+1}\oplus V'_{i+1}$ is zero. But the map $V_{i}\rightarrow
V_{i+1}$ is identity. It is a contradiction.

Second, we need to show any indecomposable representation is of form
$k_{ab}$. Suppose we have a indecomposable representation
$V=(V_i,f_i)$ of $A^{\infty}_{\infty}$. If $|(\supp V)_0|=1$, then V
is the direct sum of simple modules. Since V is indecomposable, we
have that $V\cong k_a$ for some $a\in \mathbb{Z}$.

If $|(\supp V)_0|>1$, then $\supp V$ is connected. Suppose $V$ has
the following form

\[\xymatrix{\ldots\ar[r]&V_{-2}&V_{-1}\ar[l]_{f_{-2}}\ar[r]^{f_{-1}}&V_0&V_1\ar[l]_ {f_0}\ar[r]^{f_1}&V_2&V_3\ar[l]_{f_2}\ar[r]&\ldots}\]

Now if all non-zero $f_i$ are bijections, then $V$  has the form $k_{ab}$
for some $a,b\in \mabb Z\cup \{+\infty,-\infty\}$. Since we have the
following isomorphism of representations,
\[\xymatrix{\ldots\ar[r]&V_{-2}\ar[d]^{f_{-2}\circ f_{-1}}&V_{-1}\ar[d]^{f_{-1}}\ar[l]_{f_{-2}}\ar[r]^{f_{-1}}&V_0\ar[d]^1&V_1\ar[d]^{f_0}
\ar[l]_ {f_0}\ar[r]^{f_1}&V_2\ar[d]^{f_0\circ f^{-1}_{1}}&V_3\ar[d]\ar[l]_{f_2}\ar[r]&\ldots\\
\ldots\ar[r]&V_0&V_0\ar[l]_1\ar[r]^1&V_0&V_0\ar[l]_
1\ar[r]^1&V_0&V_0\ar[l]_1\ar[r]&\ldots}\] Thus  the lower
representation can be decomposed as copies of the corresponding
$k_{ab}$, and $V$ must be isomorphic to $k_{ab}$.

Actually, all non-zero $f_i$ are bijection. If there exists some
$f_i$ which is not an isomorphism. Without loss generality, we
assume that $f_0$ is not injective. In this case, $V_1$ has a
decomposition $V_1=\Im f_0\oplus \Ker f_0$ and $\Ker f_0 \neq0$. We
can choose a basis $\{e_i\}_{i\in I}$ of $\Ker f_0$ and
$\{e'_j\}_{j\in J}$ of $\Im f_0$ such that $\{e_i\}_{i\in I}\bigcup
\{e'_j\}_{j\in J}$ is a basis of $V_1$ and $f_1(\Im f_0)\bigcap
f_1(\Ker f_0)=0$ ($f_1$ is not zero, otherwise there is a non-zero
direct summand $\Ker f_0$ of $V$).

Consider the map $\fun {f_1}{\Im f_0\oplus \Ker f_0}{V_2}$. If $\Ker f_1\neq0$, we get $\Ker f_0\bigcap \Ker f_1=0$,
otherwise the intersection will be a non-zero direct summand of V with support containing only one vertex. We denote $\langle f_1(\Ker f_0)\rangle=V'_2$, the subspace
spanned by the vectors in $f_1(\Ker f_0)$ and the complement
of $V'_2$ in $V_2$ is $V''_2$. The map $f_1$ can be expressed as $$\fun {\smatrix{f'_1&f_{12}\\0&f''_1}}{\Ker f_0\oplus \Im f_0}{V'_2\oplus V''_2}$$. If $f_{12}=0$,
then we already have a decomposition.
If $f_{12}\neq0$, then we can choose a suitable basis of $V_2$ such that $f_{12}=0$. In precisely, If $f_{12}(e'_j)=\sum_{i\in I'}f_1(e_i)\in V'_2$, then replace $e'_j$ by
$e'_j-\sum_{i\in I'}(e_i)$, and we have $f_{12}(e'_j-\sum_{i\in I'}(e_i))=0$. Repeat this procedure, we get a new basis of $\Im f_0$ such that $f_1$
can be expressed as $$\fun {\smatrix{f'_1&0\\0&f''_1}}{\Ker f_0\oplus \Im f_0}{V'_2\oplus V''_2}$$

We can choose the corresponding basis of $V_2$ such that
$V_2\cong V'_2\oplus V''_2$, and
$f_1(\Im f_0)\cong V'_2,\quad f_1(\Ker f_0)\cong V''_2$. Thus we have the following representation
isomorphism
\[\xymatrix{\ldots\ar[r]&V_0\ar[d]^{[\begin{smallmatrix}q_1\\q_2\end{smallmatrix}]}&V_1\ar[d]^{[\begin{smallmatrix}p_1\\p_2\end{smallmatrix}]}\ar[l]_
{f_0}\ar[r]^{f_1}&V_2\ar[d]^1&V_3\ar[d]^1\ar[l]_{f_2}\ar[r]&\ldots\\
\ldots\ar[r]&Imf_0\oplus \Coker f_0&\Im f_0\oplus \Ker f_0\ar[l]_
{\smatrix{f_0&0\\0&0}}\ar[r]^ {\smatrix{f'_1&0\\
0&f''_1}}&V'_2\oplus V''_2&V_3\ar[l]_{f_2}\ar[r]&\ldots}\].

Now we have a nonzero direct summand of V as follows,
\[\xymatrix{\ldots\ar[r]&0\ar[r]&\Ker f_0\ar[r]&V''_2\ar[r]&\ldots}\]
It is contradict with that $V$ is indecomposable.

Use the same argument we can show that if $f_0$ is not surjective, then there also is an nonzero direct summand of $V$.


Given a representation of Q, we can decompose it as direct sum of
indecomposable representations by the procedure and each
indecomposable representation has endomorphism ring $k$. By Krull-Schmidt-Azumaya
Theorem \cite{af74}, this decomposition is unique.
\end{proof}
\begin{cor}\label{rd1}
Let $\La=k[x]/(x^2)$, $Q$ be  the quiver of $\La$,  and $\hat{Q}$ be  the quiver of $\hat{\La}$. Then every indecomposable object in $  K(\Inj\La)$ corresponds to
an indecomposable representation of $\hat{Q}^s$.
\end{cor}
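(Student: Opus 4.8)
The plan is to chain together the pieces already assembled in this section. By Proposition \ref{rad2}, the functor $\Phi\colon K(\Inj\La)\rta\uMod\hat{\La}$ sends any indecomposable object $X$ to an indecomposable $\hat{\La}$-module without projective summand, and such a module is the same thing as a radical square zero representation of $\hat{Q}$ (equivalently, an indecomposable module over $\bar{\hat{\La}}=\hat{\La}/\soc\hat{\La}$, whose quiver is $\hat{Q}$ with the relations making it radical square zero). So the only thing left is to translate "indecomposable radical square zero representation of $\hat{Q}$" into "indecomposable representation of $\hat{Q}^s$".

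First I would invoke the correspondence from Proposition \ref{rad3}: the functors $T'$ and $T$ give mutually inverse bijections between isomorphism classes of radical square zero representations of $\hat{Q}$ and isomorphism classes of \emph{separated} representations of $\hat{Q}^s$. Composing, $\Phi$ followed by $T'$ carries each indecomposable object of $K(\Inj\La)$ to a separated representation of $\hat{Q}^s$, and this representation is indecomposable because $T'$ and $T$ are inverse bijections on isomorphism classes (hence preserve and reflect indecomposability — alternatively one notes $\Phi$ is fully faithful and $T'$ restricted to the separated representations is an equivalence onto its image). Since a separated representation of $\hat{Q}^s$ is in particular a representation of $\hat{Q}^s$, this already gives the statement: every indecomposable object in $K(\Inj\La)$ corresponds to an indecomposable representation of $\hat{Q}^s$. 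If one wants the more precise shape, one then feeds this into Proposition \ref{reps}, which says every indecomposable representation of $\hat{Q}^s=\aif$ is a thin interval representation $k_{ab}$; pulling back through $T$ gives the interval-type radical square zero representations of $\hat{Q}$, and these are exactly the images of the truncated complexes $\sg_{\leq m}\sg_{\geq l}I^\bu$ (here with $n=1$) under $\Phi$.

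I expect the only genuinely delicate point to be bookkeeping rather than mathematics: making sure the "separated" condition is compatible with the image of $\Phi$, i.e. that $\Phi(X)$, viewed as a representation of $\hat{Q}$, has no simple injective (socle) direct summand at a sink of $\hat{Q}^s$ after applying $T'$ — but this is automatic once $\Phi(X)$ is taken without projective-injective summands, which is exactly what Proposition \ref{rad2} arranges. Everything else is a direct composition of the stated equivalences and bijections, so the corollary follows immediately.
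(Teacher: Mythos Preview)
Your proposal is correct and follows essentially the same route as the paper's proof: invoke Proposition~\ref{rad2} to send an indecomposable object of $K(\Inj\La)$ to a radical square zero representation of $\hat{Q}$, then apply the bijection of Proposition~\ref{rad3} to pass to a separated (hence in particular an indecomposable) representation of $\hat{Q}^s=\aif$. Your additional remarks about indecomposability being preserved (via fully faithfulness of $\Phi$ and the fact that $T',T$ are mutually inverse on isomorphism classes) and about the separated condition are sound elaborations, but the paper's proof is content with the bare two-step composition.
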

\begin{proof}
By Proposition \ref{rad2}, the functor $\Phi$ sends every indecomposable object in $  K(\Inj \La)$ to a radical square zero representation of $\hat{Q}$.
From Proposition \ref{rad3}, there exists a bijection between radical square zero representations of $\hat{Q}$ and separated representations of $\hat{Q}^s=\aif$.
\end{proof}

We denote by $I_{\La}^{\bu}$ the following acyclic complex in $  K(\Inj \La)$

 \[\xymatrix{\ldots\ar[r] &\La\ar[r]^x&\ldots\ar[r]^x &\La\ar[r]^x&\ldots\ar[r]^x& \La\ar[r]^x&\ldots}.\]
For any $m,n\in \mabb Z\cup \{+\infty,-\infty\}, n\geq m$, we denote the truncation $\sg_{\leq m}\sg_{\geq l}I^{\bu}_{\La}$ by $I^{\La}_{m,n}$.

Now we give the main result in this part.
\begin{prop}\label{dul}
Let $\La=k[x]/(x^2)$, then every indecomposable object in $  K(\Inj \La)$ is of the form $I^{\La}_{m,n}$.
\end{prop}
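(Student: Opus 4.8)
The plan is to combine the structural results already established: Corollary~\ref{rd1} tells us that $\Phi$ sends each indecomposable object of $K(\Inj\La)$ to an indecomposable (hence, by Proposition~\ref{reps}, \emph{thin}) separated representation of $\hat Q^s=\aif$, and Proposition~\ref{rad2}/Proposition~\ref{rad3} let us pull this back to a radical square zero representation of $\hat Q$, which by Lemma~\ref{ebd0} and the equivalence $C(\Inj\La)\cong\Mod\hat\La$ is (up to projective summands) an actual complex of injective $\La$-modules. So the strategy is: start with an indecomposable $X\in K(\Inj\La)$, track it through this chain of equivalences and bijections, and read off that the resulting complex must be of the form $I^{\La}_{m,n}$.

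First I would spell out the dictionary between complexes of injectives over $\La=k[x]/(x^2)$ and representations of $\hat Q^s$. Since $\La$ has a unique indecomposable injective (namely $\La$ itself), a complex in $K(\Inj\La)$ has the form $(\La^{m_i},d^i)$, and by the argument already given in the proof of Proposition~\ref{rad2} we may assume every entry of each $d^i$ lies in $\Rad(\La,\La)=kx$, i.e.\ each $d^i$ is a matrix in the single scalar $x$. Under $\Phi$ followed by the passage to $\bar{\hat\La}$-modules and then to $\hat Q^s$-representations via $T'$, the degree-$i$ term $\La^{m_i}$ contributes to two adjacent vertices of $\aif$ (coming from $\La/\Rad\La$ and $\Rad\La/\Rad^2\La$), and the differential $d^i$ together with multiplication by $x$ supplies the maps of the separated quiver. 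The key point to verify is that indecomposability of $X$ forces $m_i\le 1$ for all $i$ and forces the surviving differentials to be isomorphisms: this is exactly the content of thinness of the corresponding $\aif$-representation (Proposition~\ref{reps}), transported back. Once all nonzero $d^i$ are isomorphisms $\La\xrightarrow{x}\La$ and each $m_i\in\{0,1\}$ with the support an interval (connectedness of the support of $k_{ab}$), the complex is by definition a brutal truncation $\sg_{\le n}\sg_{\ge m}$ of $I^{\bu}_\La$, i.e.\ $I^{\La}_{m,n}$.

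The step I expect to be the main obstacle is the bookkeeping at the boundary between the "stable" world ($K(\Inj\La)$, $\uMod\hat\La$) and the "non-stable" world ($C(\Inj\La)$, $\Mod\hat\La$): projective-injective summands are killed in $K(\Inj\La)$, so a priori an indecomposable object of $K(\Inj\La)$ corresponds to an indecomposable of $\Mod\hat\La$ only after discarding projectives, and one must check that discarding these does not disturb the truncation shape — concretely, that a contractible summand $\smatrix{0\to\La\xrightarrow{1}\La\to0}$ can always be split off, as was done inside the proof of Proposition~\ref{rad2}, so that the reduced complex still has all differentials equal to $x$ (not $1$) and is genuinely of the form $I^{\La}_{m,n}$. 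A secondary point is to confirm that \emph{every} $I^{\La}_{m,n}$ does arise as $X$ ranges over indecomposables — but this is immediate since each $I^{\La}_{m,n}$ is manifestly in $K(\Inj\La)$, is indecomposable (its image under $\Phi$ is the indecomposable thin representation $k_{ab}$ with $a,b$ determined by $m,n$), and the correspondence is a bijection by Corollary~\ref{rd1}. Assembling these observations gives the claim.
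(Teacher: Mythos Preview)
Your proposal is correct and follows essentially the same route as the paper: embed via $\Phi$ and Corollary~\ref{rd1} into thin representations of $\aif$, then use Proposition~\ref{reps} (phrased in the paper as the bound $\dim_k\Hom_{\hat\La}(\hat\La e[i],\Phi(X))\le 2$) to force each $X^i\in\{\La,0\}$, after which indecomposability gives the interval shape $I^{\La}_{m,n}$. One terminological slip: the nonzero differentials $\La\xrightarrow{x}\La$ are not isomorphisms of $\La$-modules---only the induced maps on the $\aif$ side are---but your intent is clear and the argument stands.
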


\begin{proof}
 For an indecomposable object $X\in   K(\Inj \La)$, we consider the embedding functor $\Phi:  K(\Inj \La)\rta \uMod \hat{\La}$ as in Proposition \ref{ebd1}.
We have that $\Phi(X)$ is an indecomposable radical square zero $\hat{A}$-module by Proposition \ref{rad2}. By Corollary \ref{rd1}, every indecomposable object in $  K(\Inj\La)$ corresponds to
an indecomposable representation of $\hat{Q}^s=\aif$. From Proposition \ref{rad3} and \ref{reps}, the dimension of $\Hom_{\hat{\La}}(\hat{\La}e[i], \Phi(X))$ over $k$
 is at most 2 , where $\{e[i]\}_{i\in\mabb Z}$ are all primitive idempotents of $\hat{\La}$. Thus we only have choices $\La$ and $0$ for each $X^i$. $X$ is of
 the form $I^{\La}_{m,n}$, since it is indecomposable.
\end{proof}
\subsection{Proof of the main result}

Covering theory has many applications in representation theory of finite dimensional $k$-algebras. Now, we use the covering technique to classify all indecomposable objects
of $  K(\Inj A)$ for radical square zero self-injective algebras $A$.

Let $\pi:(Q',I')\rta (Q,I)$ be the covering of bounded quiver $(Q,I)$. $\pi$ can be extended to a surjective homomorphism of algebra $\pi:kQ'/I'\rta kQ/I$. We set that $A'=kQ'/I'$ and $A=kQ/I$.
This induces the push down functor $F_{\la}:\Mod A'\rta \Mod A$ \cite{ga81} .

In general, let $G$ be a group, and $A=\oplus_{g\in G}A_g$ be a $G$-graded algebra. We define the covering algebra $\tilde{A}$ associated to the $G$-grading as follows \cite{cm84,ncv04,gh11}:
$\tilde{A}$ is the $G\times G$ matrices $(a_{g,h})$, where $a_{g,h}\in A_{gh^{-1}}$ and all but a finite number of $a_{g,h}$ are 0. Then $\tilde{A}$ is a ring via matrix multiplication and addition.
Set $\mac E=\{e_g\}_{g\in G}$, where $e_g$ is the matrix with 1 in the $(g,g)$-entry and 0 in all other entries.
\begin{prop}\cite{cm84,ncv04}
 Let $G$ be a group and $A=\oplus_{g\in G}A_g$ is a $G$-graded algebra. The covering algebra of $A$ associated to the $G$-grading is $\tilde{A}$. Then $\tilde{A}$ is a locally bounded k-algebra.
Moreover, the category of finitely generated graded $A$-modules $\mod_GA$ is equivalent to $\mod{\tilde{A}}$.
\end{prop}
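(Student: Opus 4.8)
The plan is to regard $\tilde A$ as the ring with a complete set of orthogonal idempotents $\mac E=\{e_g\}$ attached to the $G$-grading, and to produce an explicit equivalence $\mod_G A\cong\mod\tilde A$ by reading off the homogeneous components of a graded module as the images of the idempotents $e_g$. First I would check that $\tilde A$ is an associative $k$-algebra under the matrix operations. The only nonformal point is closure under multiplication: if $(a_{g,h})$ and $(b_{g,h})$ are finitary matrices with $a_{g,h},b_{g,h}\in A_{gh^{-1}}$, then the $(g,h)$-entry of their product is the finite sum $\sum_k a_{g,k}b_{k,h}$, whose terms lie in $A_{gk^{-1}}A_{kh^{-1}}\subseteq A_{gk^{-1}\cdot kh^{-1}}=A_{gh^{-1}}$, so the product is again of the prescribed form. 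Associativity and distributivity are inherited from ordinary matrix arithmetic, and a direct computation gives $e_ge_h=\delta_{g,h}e_g$, so $\mac E$ is a set of orthogonal idempotents with $\tilde A=\bigoplus_{g,h}e_g\tilde A e_h$.

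For local boundedness I would compute $e_h\tilde A e_g\cong A_{hg^{-1}}$ as $k$-spaces, the isomorphism placing an element of $A_{hg^{-1}}$ in the $(h,g)$-entry; in particular $e_g\tilde A e_g\cong A_e$, a finite-dimensional unital subalgebra (the unit of $A$ is homogeneous of degree $e$). Refining each $e_g$ by a complete set of primitive orthogonal idempotents $f_1,\dots,f_n$ of $A_e$ yields idempotents $e_{(g,i)}=e_gf_i$ whose endomorphism rings $e_{(g,i)}\tilde A e_{(g,i)}\cong f_iA_ef_i$ are local, since $k$ is algebraically closed and $A_e$ finite-dimensional. The row and column finiteness follows from $\sum_{(h,j)}\dim_k e_{(h,j)}\tilde A e_{(g,i)}\leq\sum_{h\in G}\dim_k A_{hg^{-1}}=\dim_k A<\infty$, so the associated $k$-category is locally bounded.

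To build the equivalence, define $\Phi:\mod_G A\to\mod\tilde A$ on objects by letting $\Phi(M)$ have underlying space $M=\bigoplus_gM_g$ with a matrix $(a_{g,h})$ acting via $((a_{g,h})\cdot m)_g=\sum_h a_{g,h}m_h$; this is well defined because $A_{gh^{-1}}M_h\subseteq M_g$, and every degree-preserving $A$-map is automatically $\tilde A$-linear, so $\Phi$ is a functor. In the reverse direction set $\Psi(N)_g=e_gN$, with $a\in A_x$ acting $e_gN\to e_{xg}N$ through the matrix $aE_{xg,g}$ (the matrix with $a$ in the $(xg,g)$-entry and zero elsewhere, which lies in $A_{(xg)g^{-1}}=A_x$); the identity $(aE_{p,q})(bE_{r,s})=\delta_{q,r}(ab)E_{p,s}$ guarantees that this defines a graded $A$-module. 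One then checks $\Psi\Phi\cong\mathrm{id}$ and $\Phi\Psi\cong\mathrm{id}$. Finally I would match the finiteness conditions: a finitely generated graded module over the finite-dimensional algebra $A$ is finite-dimensional and hence supported on finitely many $g$, so $\Phi(M)$ is finite-dimensional over $\tilde A$; conversely a finitely generated $\tilde A$-module is finite-dimensional and supported on finitely many objects $e_{(g,i)}$, so $\Psi(N)$ is finitely generated graded. Thus $\Phi$ and $\Psi$ restrict to mutually quasi-inverse equivalences $\mod_GA\cong\mod\tilde A$.

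The step I expect to be the main obstacle is the bookkeeping caused by the absence of a unit in $\tilde A$ when $G$ is infinite: one must fix the convention that $\mod\tilde A$ denotes finite-dimensional unitary modules (equivalently, finite-dimensional representations of the locally bounded category of the second step), and then verify that $N=\bigoplus_g e_gN$ together with the dictionary ``finitely generated $=$ finite-dimensional $=$ finite support'' holds consistently on both sides, so that $\Phi$ and $\Psi$ genuinely become mutually inverse after restriction. The purely ring-theoretic verifications of the first two steps are routine once the degree arithmetic is organized; the care lies entirely in this categorical matching.
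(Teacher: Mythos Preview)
The paper does not prove this proposition; it is quoted from the literature \cite{cm84,ncv04} and used as a black box. Your argument is a correct and self-contained verification of the cited result: the degree bookkeeping $A_{gk^{-1}}A_{kh^{-1}}\subseteq A_{gh^{-1}}$ for associativity, the identification $e_h\tilde A e_g\cong A_{hg^{-1}}$ giving $\sum_h\dim_k e_h\tilde A e_g=\dim_kA<\infty$, the refinement of $e_g$ by primitive idempotents of $A_e$ to obtain local endomorphism rings, and the explicit quasi-inverse functors $\Phi,\Psi$ are all sound. Your caution about the non-unital case is warranted and correctly resolved: in this context $\mod\tilde A$ means finite-dimensional unitary modules over the locally bounded category, so ``finitely generated'' agrees with ``finite-dimensional'' on both sides and the restriction of $\Phi,\Psi$ goes through. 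In short, there is nothing to compare against in the paper, and your proof stands on its own.
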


The forgetful functor $F_{\la}:\mod_GA\rta \mod A$ is the functor sending $X$ to $X$, viewed as an $A$-module. This functor is exactly the pushdown functor $F_{\la}:\mod \tilde{A}\rta \mod A$. The functor $F_{\la}$ is exact
\cite[Proposition 2.7]{gre83}.
By the exactness of functor $F_{\la}$, we have the induced functor $F_{\la}:\db {\tilde{A}}\rta \db A$ between the corresponding derived categories.
\begin{lem}
 Let $G$ be a group and $\tilde{A}$ be the covering algebra associated to a $G$-graded algebra $A$. Then the forgetful functor $F_{\la}:\mod\tilde{A}\rta \mod A$ induces a triangle functor
 $F_{\la}:\db {\tilde{A}}\rta\db A$ such that the following diagram commutes,
\[\xymatrix{\mod \tilde{A}\ar[r]^{can}\ar[d]^{F_{\la}}&\db {\tilde{A}}\ar[d]^{F_{\la}}\\
\mod A\ar[r]^{can}&\db A}.\]
where the functor $can:\mod A\rta \db A$ is the canonical embedding functor.
\end{lem}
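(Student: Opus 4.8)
The plan is to obtain the induced functor simply by applying $F_{\la}$ to complexes degreewise, and then to verify that this construction descends through the two localizations that build $\db{-}$ out of $\mod{-}$. Before anything else I would record that the forgetful functor really does restrict to finitely generated modules: since $\tilde{A}$ is locally bounded, a finitely generated $\tilde{A}$-module is finite dimensional over $k$, so its image under $F_{\la}$ is a finite dimensional, hence finitely generated, $A$-module. Thus $F_{\la}$ restricts to a functor $\mod\tilde{A}\rta\mod A$, and by \cite[Proposition 2.7]{gre83} this restriction is exact; it is clearly also additive and $k$-linear.

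Next I would extend $F_{\la}$ degreewise to a functor $C^b(\mod\tilde{A})\rta C^b(\mod A)$ on bounded complexes. Because $F_{\la}$ is additive it carries a chain homotopy $s$ to the chain homotopy $F_{\la}(s)$, so null-homotopic maps are sent to null-homotopic maps and the functor passes to the homotopy categories, giving $\bar F:K^b(\mod\tilde{A})\rta K^b(\mod A)$. This $\bar F$ is a triangle functor: it commutes strictly with the shift, and it sends the mapping cone of a chain map $f$ to the mapping cone of $F_{\la}(f)$, since the cone is assembled in each degree from finite direct sums, the components of $f$, and the differentials, all of which $F_{\la}$ preserves; hence standard triangles go to standard triangles.

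Then, using exactness, $F_{\la}$ sends an acyclic bounded complex to an acyclic bounded complex (equivalently, quasi-isomorphisms to quasi-isomorphisms), so $\bar F$ maps the thick subcategory of acyclic complexes in $K^b(\mod\tilde{A})$ into that of $K^b(\mod A)$. By the universal property of the Verdier quotient, $\bar F$ therefore induces a triangle functor $F_{\la}:\db{\tilde{A}}\rta\db A$, which is the one in the statement. For the commutativity of the square I would just note that $can$ sends a module $M$ to the stalk complex with $M$ in degree $0$ and a morphism to the corresponding degree-$0$ chain map; applying $F_{\la}$ degreewise to such a stalk complex returns the stalk complex on $F_{\la}(M)$, namely $can(F_{\la}(M))$, and likewise on morphisms, so $F_{\la}\circ can=can\circ F_{\la}$.

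I do not expect a genuine obstacle here: every step is formal once exactness of the pushdown is available. The only point that needs a word of care — and essentially the only place the hypotheses enter — is the finiteness observation in the first paragraph, which is what guarantees that the forgetful functor lands in $\mod A$ rather than merely in $\Mod A$, so that the diagram of the statement makes sense.
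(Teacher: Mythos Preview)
Your argument is correct and is precisely the standard elaboration the paper has in mind: the paper does not give a formal proof of this lemma at all, but simply remarks in the preceding sentence that ``by the exactness of the functor $F_{\la}$, we have the induced functor $F_{\la}:\db{\tilde{A}}\rta\db A$,'' citing \cite[Proposition 2.7]{gre83} for exactness. Your write-up supplies exactly the details behind that sentence---degreewise extension, passage to the homotopy category via additivity, preservation of acyclics via exactness, and descent through the Verdier quotient---together with the finiteness check ensuring the target is $\mod A$; nothing further is needed.
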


Now, assume that the group $G$ is finite, $A$ is a $G$-graded algebra and the induced action of $G$ on $Q_A$ is free , $B$ is the covering algebra associated to the group $G$. Then the covering functor $B\rta A$ induces a covering functor between
the corresponding repetitive algebras $\hat{B}\rta\hat{A}$. The forgetful functor $F_{\la}:\mod \hat{B}\rta \mod \hat{A}$ induces a functor $F_{\la}:\umod \hat{B}\rta\umod\hat{A}$. Consider the full embedding
$i_{\La}:\mod \La\rta \umod\hat{\La}$ for any algebra $\La$, we have $i_A\cdot F_{\la}=F_{\la}\cdot i_B$.

Given an algebra $A$, there is the Happel functor $F^A:\db A\rta \umod\hat{A}$ embedding the  derived category to the stable module category of the repetitive algebra.
For the covering of algebras $B\rta A$ with group $G$, using induction on the length of complexes in $\db B$, we get $F_{\la}\cdot F^B=F^A\cdot F_{\la}$, i.e the following diagram is commutative
\[\xymatrix{\db B\ar[r]^{F^B}\ar[d]^{F_{\la}}&\umod \hat{B}\ar[d]^{F_{\la}}\\
\db A\ar[r]^{F^A}&\umod \hat{A}.}\]

The following result is a consequence of \cite[Lemma 4.5]{bik3} or follows directly from Lemma \ref{devis}.
\begin{lem}\label{brt}
 Let  $F,G:\mac T\rta\mac S$ be two triangle functors preserving coproducts between two $k$-linear compactly generated triangulated categories.
If there is a natural isomorphism $F(X)\cong G(X)$ for any compact object $X\in \mac T^c$,
then $F\cong G$.
\end{lem}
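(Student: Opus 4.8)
The plan is to prove Lemma \ref{brt} by reducing it to Lemma \ref{devis}. The key observation is that the class of objects on which two coproduct-preserving triangle functors agree (up to natural isomorphism) is itself a nice subcategory, and if it contains a compact generating set then it must be everything.

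First I would fix a generating set $\mabb S$ of compact objects for $\mac T$; by hypothesis we may take $\mabb S = \mac T^c$ itself, or any generating subset, and we know $F(X) \cong G(X)$ naturally for all $X \in \mabb S$. Next I would introduce the full subcategory $\mac R$ of $\mac T$ consisting of those objects $X$ for which there is an isomorphism $F(X) \cong G(X)$ — the goal is to upgrade this to: $\mac R$ is a triangulated subcategory closed under coproducts and containing $\mabb S$, and then invoke Lemma \ref{devis} to conclude $\mac R = \mac T$, which gives that $F$ and $G$ are objectwise isomorphic on all of $\mac T$; a final argument promotes this to a natural isomorphism $F \cong G$ of functors.

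The verification that $\mac R$ has the required closure properties is where one has to be a little careful, and this is the main technical point. Closure under coproducts is immediate: both $F$ and $G$ preserve coproducts, so $F(\coprod X_i) \cong \coprod F(X_i) \cong \coprod G(X_i) \cong G(\coprod X_i)$. Closure under shifts uses that $F$ and $G$ are triangle functors, hence commute with $\Sg$ up to natural isomorphism. The delicate step is closure under triangles: given a triangle $X \to Y \to Z \to \Sg X$ with $X, Y \in \mac R$, one wants $Z \in \mac R$. Applying $F$ and $G$ yields two triangles in $\mac S$ with the first two terms identified; one then needs the commuting square (built from the given isomorphisms $F(X)\cong G(X)$ and $F(Y)\cong G(Y)$, which can be arranged to commute with the maps $X \to Y$ after possibly modifying one of them) to extend to a morphism of triangles, and then the $5$-lemma for triangulated categories forces the induced map on the third terms to be an isomorphism. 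Strictly speaking, to run this argument cleanly one wants the isomorphisms to be \emph{natural} on $\mabb S$ and on the pieces already built, so the cleanest route is: first prove directly (by induction on the ``length'' of an object, i.e. the number of cones needed to build it from $\mabb S$) that $F$ and $G$ restricted to the smallest thick subcategory generated by $\mabb S$ — which is $\mac T^c \cong \db A$ in our applications — are naturally isomorphic, which is exactly the hypothesis already packaged; then extend along coproducts.

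The other route, and the one the phrasing ``follows directly from Lemma \ref{devis}'' suggests, is slicker: take $\mac R$ to be the full subcategory of $\mac T$ on objects $X$ such that the canonical comparison (no canonical comparison is given a priori, so really: such that $F(X) \cong G(X)$), observe $\mabb S \subseteq \mac R$ and $\mac R$ is closed under coproducts, and — granting that $\mac R$ is also triangulated, which is the content of the $5$-lemma argument above — apply Lemma \ref{devis} to get $\mac R \cong \mac T$, i.e.\ the inclusion is an equivalence, so every object of $\mac T$ lies in $\mac R$. I expect the main obstacle to be purely bookkeeping: making the objectwise isomorphisms sufficiently coherent (natural) to survive the passage through cones and coproducts, rather than any deep input. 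In the applications in this paper the functors being compared are $F_\lambda \cdot F^B$ versus $F^A \cdot F_\lambda$ extended to the big categories $K(\Inj B) \to \uMod\hat A$, where the agreement on compacts is precisely the commuting square established just above the lemma, so invoking Lemma \ref{brt} will immediately give the desired global identification.
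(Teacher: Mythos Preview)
Your approach matches the paper's exactly: the paper gives no proof of this lemma at all, merely asserting that it ``is a consequence of \cite[Lemma 4.5]{bik3} or follows directly from Lemma \ref{devis}''. Your proposal is precisely the fleshing-out of the Lemma \ref{devis} route that the paper leaves implicit, and you have correctly identified the one genuine subtlety --- that closure of $\mac R$ under cones requires the objectwise isomorphisms to be coherent enough for the $5$-lemma square to commute, and that promoting objectwise isomorphism to a natural transformation needs care. The paper does not address this point either; the citation to Benson--Iyengar--Krause is doing that work, and their argument proceeds essentially as you outline (tracking a fixed natural isomorphism on compacts through the dévissage rather than working with bare objectwise isomorphisms).
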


We summarize the above construction. For an algebra $A$, Let $F^A:  K(\Inj A)\hookrightarrow \uMod \hat{A}$ be the functor constructed in Proposition \ref{inj-ebding}
and the functor extends the Happel's functor $F^A:\db A\rta \umod \hat{A}$. If  $\pi:B\rta A$ is the covering of algebra with group $G$ , then there is a covering functor $F_{\la}:\Mod B\rta \Mod A$.
In this case, the functor $F_{\la}$ induces a functor $  K(\Mod B)\rta  K(\Mod A)$ which preserves injective modules. Thus $F_{\la}$ induces a functor denoted by $F_{\la}:  K(\Inj B)\rta   K(\Inj A)$.
\begin{prop}\label{ind-0}
 Let $G$ be a finite group,  $A$ be a $G$-graded algebra and $B$ be the covering algebra of $A$ associated to $G$. Let  the functors $F^A,F^B$ and $F_{\la}$ be as  above.
 Then we have that $F^A\cdot F_{\la}\cong F_{\la}\cdot F^B$, i.e the following commutative diagram
\[\xymatrix{  K(\Inj B)\ar[r]^{F^B}\ar[d]^{F_{\la}}&\uMod \hat{B}\ar[d]^{F_{\la}}\\
  K(\Inj A)\ar[r]^{F^A}&\uMod \hat{A}.}\]
\end{prop}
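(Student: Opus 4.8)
The plan is to verify the commutativity of the square at the level of compact objects and then promote it to the whole category using Lemma \ref{brt}. First I would check that all four functors in the square preserve coproducts: $F_\la : K(\Inj B)\to K(\Inj A)$ is the pushdown, which is exact and visibly commutes with coproducts since it is computed degreewise; $F_\la:\uMod\hat B\to\uMod\hat A$ likewise; and $F^B$, $F^A$ preserve coproducts by construction in Proposition \ref{inj-ebding} (being compositions of the canonical functors $K(\Inj-)\to K_{ac}(\Inj\widehat{(-)})\xrightarrow{\sim}\uMod\widehat{(-)}$, each of which respects coproducts). So both composites $F^A\circ F_\la$ and $F_\la\circ F^B$ are coproduct-preserving triangle functors between the compactly generated categories $K(\Inj B)$ and $\uMod\hat A$.

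Next I would restrict attention to the compact objects. Under the equivalence $K^c(\Inj B)\cong\db B$ and $K^c(\Inj A)\cong\db A$, the functor $F_\la:K(\Inj B)\to K(\Inj A)$ restricts to the pushdown $F_\la:\db B\to\db A$ discussed in the lemma on covering algebras, and $F^A$, $F^B$ restrict to the Happel embeddings $\db A\to\umod\hat A$, $\db B\to\umod\hat B$. Hence on $K^c(\Inj B)=\db B$ the two composites become $F^A\circ F_\la$ and $F_\la\circ F^B$ on the bounded derived categories, and the required natural isomorphism there is precisely the commutativity of the diagram
\[\xymatrix{\db B\ar[r]^{F^B}\ar[d]^{F_{\la}}&\umod \hat{B}\ar[d]^{F_{\la}}\\ \db A\ar[r]^{F^A}&\umod \hat{A}}\]
established earlier in the excerpt (by induction on the length of complexes, using that $F_\la\cdot F^B=F^A\cdot F_\la$ was shown for the Happel functors and that $F_\la$ commutes with the canonical embeddings $\mod(-)\to\db(-)$).

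With a natural isomorphism in hand on all compact objects, Lemma \ref{brt} applies directly: taking $\mac T=K(\Inj B)$ (compactly generated with $\mac T^c=\db B$), $\mac S=\uMod\hat A$, $F=F^A\circ F_\la$ and $G=F_\la\circ F^B$, we conclude $F^A\circ F_\la\cong F_\la\circ F^B$ as functors on all of $K(\Inj B)$, which is the assertion.

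The main obstacle I anticipate is bookkeeping rather than conceptual: one must be careful that the "pushdown" $F_\la:K(\Inj B)\to K(\Inj A)$ really does land in injectives and is compatible with the corresponding pushdown $\widehat B\to\widehat A$ on repetitive algebras (i.e.\ that the covering functor $\pi:B\to A$ induces the covering $\hat B\to\hat A$ in a way compatible with the identifications $K_{ac}(\Inj\widehat{(-)})\cong\uMod\widehat{(-)}$ used to define $F^{(-)}$), and that the freeness of the $G$-action guarantees $F_\la$ sends indecomposable injectives of $B$ to injectives of $A$. Once the identification of the two vertical arrows with genuine pushdowns is nailed down, the compact-object comparison is exactly the previously-proven diagram for the Happel functors, and Lemma \ref{brt} does the rest with no further estimates.
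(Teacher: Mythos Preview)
Your proposal is correct and follows essentially the same route as the paper's own proof: reduce to compact objects where the square is the already-established Happel diagram, verify that both composites preserve coproducts, and then invoke Lemma~\ref{brt}. If anything, your write-up is more careful than the paper's about why the pushdown lands in injectives and is compatible with the repetitive-algebra level, points the paper leaves implicit.
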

\begin{proof}
Let $L_1=F^A\cdot F_{\la}$ and $L_2=F_{\la}\cdot F^B$ be two exact functors from $K(\Inj B)$ to $\uMod \hat{A}$.

The given diagram restricted to the subcategory of compact objects is commutative, i.e $L_1(X)\cong L_2(X)$ for all compact object $X\in  K^c(\Inj B)$ by above. We have that
\[\Im L_2|_{K^(\Inj B)}\cong \Im L_1|_{K^c(\Inj B)}\subset \uMod \hat{A}.\]By Lemma \ref{brt}, we only need to show $L_1$ and $L_2$ preserves coproducts. By the properties of functors $F_{\la}$ and $F$, we know that $L_1$ and $L_2$ preserves coproducts.
\end{proof}
Consider the quiver $C_n$ as the beginning of this part. There is a quiver morphism $\fun {\pi}{(C_n,I_n)}{(C_1,I_1)}$ by $\pi(i)=0,i\in [1,n],$ and $\pi(\a)=\a$.
In order to understand $\hat{A}_n$-modules, we need to study the representations of the quiver $\hat{C}_n$.
\begin{prop}
Consider the covering of bounded quivers  $\fun {\pi}{(C_n,I_n)}{(C_1,I_1)}$, let $(\hat{C_n},\hat{I_n})$ and $(\hat{C_1},\hat{I_1})$ be the repetitive quivers of $(C_n,I_n)$ and $(C_1,I_1)$ respectively.
Then $(\hat{C_n},\hat{I_n})\rta (\hat{C_1},\hat{I_1})$  is a covering of  bounded quivers with group $G=\mabb Z_n$.
\end{prop}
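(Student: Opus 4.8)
The plan is to verify directly that the quiver morphism $\hat\pi\colon(\hat C_n,\hat I_n)\to(\hat C_1,\hat I_1)$ obtained by functoriality of the repetitive construction from $\pi$ is a Galois covering with group $\mathbb Z_n$, by exhibiting the free $\mathbb Z_n$-action on $\hat C_n$ and checking that $\hat\pi$ is the quotient. First I would write down $\hat C_1$ explicitly: since $C_1$ is the one-loop quiver with $\alpha^2=0$, the repetitive quiver $\hat C_1$ is the quiver $\hat Q$ already displayed in the excerpt, namely a $\mathbb Z$-indexed line of vertices $\circ_i$ with loops $\alpha_i$ and connecting arrows $\beta_i\colon\circ_i\to\circ_{i+1}$, subject to $\alpha^2=\beta^2=\alpha\beta=\beta\alpha=0$ (the first two relations are the original loop relation doubled, the commutation relation becomes the Happel relation, and the product relations come from the nilpotency of the bimodule part). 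Then I would describe $\hat C_n$: the repetitive algebra of $A_n=kC_n/I_n$ has vertices indexed by $[1,n]\times\mathbb Z$, with ``horizontal'' arrows $\alpha$ coming from the copies of $C_n$ inside each $A$-layer and ``vertical'' arrows $\beta$ coming from the $DA$-bimodule part connecting layer $i$ to layer $i+1$, together with the analogous relations (two kinds of squares zero, and the Happel-type commutation/annihilation relations between $\alpha$ and $\beta$).

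The second step is to define the $\mathbb Z_n$-action. The cyclic group $\mathbb Z_n$ acts on $C_n$ by rotation, $j\mapsto j+1\pmod n$, and this action is admissible (free on vertices, since $n$ vertices are permuted cyclically with trivial stabilizers). Functoriality of $A\mapsto\hat A$ under covering morphisms — already invoked in the paragraph preceding the statement, where it is noted that the covering functor $B\to A$ induces a covering functor $\hat B\to\hat A$ — gives a $\mathbb Z_n$-action on $\hat C_n$ that commutes with the layer structure: the generator sends the vertex $(j,i)$ to $(j+1\bmod n, i + \lfloor (j+1)/n\rfloor$-correction$)$, i.e. it is rotation within the ``big cycle'' $C_n$ lifted compatibly through the $\mathbb Z$-grading of the repetition. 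I would then check this action is free on the vertex set $[1,n]\times\mathbb Z$ of $\hat C_n$ (an element fixing a vertex would have to fix its whole $C_n$-orbit, forcing the group element to be trivial) and that it preserves the relations $\hat I_n$ (since rotation preserves $I_n=\alpha^2$ and the bimodule $DA_n$ is $\mathbb Z_n$-equivariant, all induced relations are permuted).

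Finally I would identify $\hat\pi$ with the quotient map. The morphism $\pi\colon(C_n,I_n)\to(C_1,I_1)$ collapsing all vertices to $0$ and sending every $\alpha$ to $\alpha$ is exactly the quotient of $C_n$ by the rotation $\mathbb Z_n$-action; applying the repetitive-algebra functor, and using that it is compatible with forming quotients by admissible group actions, $\hat\pi$ is the quotient of $\hat C_n$ by the induced $\mathbb Z_n$-action. Concretely one checks that $\hat\pi$ is surjective on vertices and arrows, that two vertices (resp. arrows) of $\hat C_n$ have the same image iff they lie in one $\mathbb Z_n$-orbit, that $\hat\pi$ restricts to a bijection on arrows out of (and into) each vertex — the local isomorphism condition for a covering — and that $\hat\pi^{-1}(\hat I_1)$ generates $\hat I_n$ as an ideal. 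I expect the main obstacle to be purely bookkeeping: writing down $\hat C_n$ and $\hat I_n$ explicitly enough that the local-bijectivity of $\hat\pi$ on incoming and outgoing arrows at each vertex, and the exact match $\hat\pi^{-1}(\hat I_1)=\hat I_n$ (not just $\subseteq$ or $\supseteq$), can be read off; the group-theoretic content — freeness of the $\mathbb Z_n$-action — is immediate from freeness of the rotation action on $C_n$ together with functoriality of the repetitive construction already recorded above.
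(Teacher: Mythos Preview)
The paper does not actually prove this proposition: it is stated without argument and immediately followed by an explicit drawing of $\hat C_n$. Implicitly it is treated as the special case $B=A_n$, $A=A_1$, $G=\mathbb Z_n$ of the general assertion, made a few lines earlier, that a covering $B\to A$ with finite group induces a covering $\hat B\to\hat A$ of repetitive algebras. Your plan to verify the covering axioms directly is therefore more than the paper offers, and the outline---write out both repetitive quivers, exhibit the free $\mathbb Z_n$-action by rotation, identify $\hat\pi$ with the quotient---is correct.

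One concrete correction: the relations you give for $\hat I_1$, namely $\alpha^2=\beta^2=\alpha\beta=\beta\alpha=0$, are the relations for the factor algebra $\bar{\hat\Lambda}$ (repetitive algebra modulo socle), not for $\hat\Lambda$ itself. The paper records the relations for $\hat Q$ as $\alpha^2=0=\beta^2$ and $\alpha\beta=\beta\alpha$; the commutation is an equality, not an annihilation, and the indecomposable projective $\hat\Lambda$-modules are four-dimensional with socle spanned by $\alpha\beta=\beta\alpha$. The same holds for $\hat I_n$. This slip does not affect your covering argument---the $\mathbb Z_n$-action, freeness, and local bijectivity go through verbatim for either ideal---but you should fix it so that you are describing $(\hat C_n,\hat I_n)$ rather than its radical-square-zero quotient.

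A smaller remark: the $\mathbb Z_n$-action on the vertex set $[1,n]\times\mathbb Z$ of $\hat C_n$ is simply $g\cdot(j,i)=(j{+}1\bmod n,\,i)$, with no layer correction. With that formula the compatibility with both the $\alpha$-arrows $(j,i)\to(j{+}1,i)$ and the $\beta$-arrows $(j,i)\to(j{-}1,i{+}1)$, the freeness, and the identification of the orbit space with $\hat C_1$ are all immediate.
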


The repetitive quiver $\hat{C}_n$ of $C_n$ is given by the following infinite quiver:

\begin{center}
\begin{tabular}{ccc}
$\xy <1cm,0cm>:
(0,0)*+{\ldots},
(-0.8,0.5)*+{1}="1",
(0,1)*+{2}="2",
(0.8,0.5)*+{3}="3",
(0.8,-0.5)*+{4}="4",
(0,-1)*+{5}="5",
(-0.8,-0.5)*+{n}="6",
(-1.7,1)*+{n}="1a",
(0,2)*+{1}="2a",
(1.7,1)*+{2}="3a",
(1.7,-1)*+{3}="4a",
(0,-2)*+{4}="5a",
(-1.7,-1)*+{n-1}="6a",
(-2.6,1.5)*+{n-1}="1b",
(0,3)*+{n}="2b",
(2.6,1.5)*+{1}="3b",
(2.6,-1.5)*+{2}="4b",
(0,-3)*+{3}="5b",
(-2.6,-1.5)*+{n-2}="6b",
(-1.6,2.5)*+{\a},
(1.6,2.5)*+{\a},
(0.2,2.5)*+{\b},
(2.8,0)*+{\a},
(2.1,1)*+{\b},

\ar@{->}"1";"2"<0pt>
\ar@{->}"2";"3"<0pt>
\ar@{->}"3";"4"<0pt>
\ar@{->}"4";"5"<0pt>
\ar@{--}"5";"6"<0pt>
\ar@{--}"6";"1"<0pt>

\ar@{->}"1a";"2a"<0pt>
\ar@{->}"2a";"3a"<0pt>
\ar@{->}"3a";"4a"<0pt>
\ar@{->}"4a";"5a"<0pt>
\ar@{--}"5a";"6a"<0pt>
\ar@{--}"6a";"1a"<0pt>
\ar@{->}"1b";"2b"<0pt>
\ar@{->}"2b";"3b"<0pt>
\ar@{->}"3b";"4b"<0pt>
\ar@{->}"4b";"5b"<0pt>
\ar@{--}"5b";"6b"<0pt>
\ar@{--}"6b";"1b"<0pt>
\ar@{-->}"1";"1a"<0pt>
\ar@{-->}"2";"2a"<0pt>
\ar@{-->}"3";"3a"<0pt>
\ar@{-->}"4";"4a"<0pt>
\ar@{-->}"5";"5a"<0pt>
\ar@{--}"6";"6b"<0pt>
\ar@{--}"6b";(-3.4,-2)<0pt>
\ar@{--}"1b";(-3.4,2)<0pt>
\ar@{--}"2b";(0,4)<0pt>
\ar@{--}"3b";(3.4,2)<0pt>
\ar@{--}"4b";(3.4,-2)<0pt>
\ar@{--}"5b";(0,-4)<0pt>
\ar@{-->}"1a";"1b"<0pt>
\ar@{-->}"2a";"2b"<0pt>
\ar@{-->}"3a";"3b"<0pt>
\ar@{-->}"4a";"4b"<0pt>
\ar@{-->}"5a";"5b"<0pt>
\ar@{-->}"6a";"6b"<0pt>



\endxy$
\end{tabular}
\end{center}
We label all the original arrows in quiver $C_n$ as $\a$ and all connecting arrows (i.e dash arrows) as $\b$. The relation $\hat{I}_n$ is generated by all the possible $\a^2,\b^2$ and $\a\b=\b\a$. The path algebra of quiver
$(\hat{C}_n,\hat{I}_n)$ is the repetitive algebra $\hat{A}_n$ of $A_n$.

Indecomposable objects in $\uMod\hat{A}_n$ can be expressed as the indecomposable modules of $\bar{\hat{A}}_n$, which is the factor algebra of $\hat{A}_n$ modulo its socle \cite{ri97,er90}. More precisely, the algebra $\bar{\hat{A}}_n$ is given by
the quiver $\hat{C}_n$ and relations $\a^2=0=\b^2$ and $\a\b=\b\a=0$, which is radical square zero.

\begin{prop}\label{ind-1}
 Let $A_n=kC_n/I_n$ be the algebra as above. Every indecomposable object in $  K(\Inj A_n)$ corresponds to an indecomposable module of $\bar{\hat{A}}_n$.
\end{prop}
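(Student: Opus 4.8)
The plan is to combine the symmetric-algebra embedding $\Phi\colon K(\Inj\La)\rta\uMod\hat\La$ from Proposition \ref{ebd1} for $\La=k[x]/(x^2)$ with the covering $\pi\colon(C_n,I_n)\rta(C_1,I_1)$, noting $A_1=kC_1/I_1\cong k[x]/(x^2)=\La$ and that $A_n$ is the covering algebra of $A_1$ for the grading by $G=\mabb Z_n$. Since $A_n$ is symmetric, Proposition \ref{ebd1} also gives a fully faithful triangle functor $\Phi_n\colon K(\Inj A_n)\rta\uMod\hat A_n$ induced by the exact embedding $C(\Inj A_n)\rta\Mod\hat A_n$; I would first observe that under the identifications $\Mod\hat A_n\cong\Mod^{\mabb Z}T(A_n)$ and $C(\Inj A_n)\cong C(\Mod A_n)$ from the preceding lemmas, the functor $\Phi_n$ agrees with Happel's extended embedding $F^{A_n}$ of Proposition \ref{inj-ebding} on compact objects (both restrict to $D^b(\mod A_n)\to\umod\hat A_n$), hence by Lemma \ref{brt} (both preserve coproducts, being induced by exact functors on the underlying additive categories) $\Phi_n\cong F^{A_n}$ on all of $K(\Inj A_n)$.

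The key step is then to transport indecomposability through the covering square of Proposition \ref{ind-0}: $F^{A_n}\cdot F_\la\cong F_\la\cdot F^{A_1}$ where $F_\la\colon K(\Inj A_1)\rta K(\Inj A_n)$ and $F_\la\colon\uMod\hat A_1\rta\uMod\hat A_n$ are the pushdown functors. Given an indecomposable $X\in K(\Inj A_n)$, I would argue that $X$ is a direct summand of $F_\la(Y)$ for some indecomposable object $Y\in K(\Inj A_1)$: this uses that the $\mabb Z_n$-action on the quiver $\hat C_n$ is free, so the pushdown functor $F_\la\colon\mod\hat A_n\rta\mod\hat A_1$ is a covering functor in the sense of Gabriel, and its partner pull-up/adjoint $F_\cdot$ satisfies $F_\la F_\cdot(X)\cong\bigoplus_{g\in G}{}^gX$, of which $X$ is a summand — so $X$ is a summand of $F_\la$ applied to something, which decomposes into indecomposables of $K(\Inj A_1)$. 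By Proposition \ref{dul} every indecomposable object of $K(\Inj A_1)=K(\Inj\La)$ is of the form $I^\La_{m,n}$, and under $\Phi\cong F^{A_1}$ (via Corollary \ref{rd1} and Propositions \ref{rad2}, \ref{rad3}, \ref{reps}) such an object corresponds to a thin indecomposable representation of the separated quiver $\hat Q^s$, equivalently a radical square zero representation of $\hat Q$, equivalently an indecomposable $\bar{\hat\La}$-module. Applying the covering functor $\hat C_n\rta\hat Q$ (equivalently the pushdown $F_\la\colon\umod\hat A_n\rta\umod\hat A_1$, which commutes with passage to the quotient by the socle since covering functors are compatible with radicals), the image $F_\la(Y)$ becomes a radical square zero $\hat A_n$-module, and each indecomposable summand — in particular $\Phi_n(X)$ — is an indecomposable $\bar{\hat A}_n$-module, as $\bar{\hat A}_n$ is exactly the radical square zero factor of $\hat A_n$. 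Since $\Phi_n$ is fully faithful, indecomposability of $X$ forces indecomposability of $\Phi_n(X)$, giving the claim.

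The main obstacle is making precise the claim that indecomposable objects of $K(\Inj A_n)$ are summands of pushdowns of indecomposables of $K(\Inj A_1)$, i.e.\ extending Gabriel's covering-functor formalism from the module-category level to the (non-compactly-bounded) homotopy categories $K(\Inj-)$. One has to check that $F_\la\colon K(\Inj A_1)\rta K(\Inj A_n)$ — or rather a suitable adjoint — still satisfies a Galois-covering identity $F_\la F_\cdot\cong\bigoplus_{g\in\mabb Z_n}({}^g-)$ on the homotopy category; this follows by applying $K(\Inj-)$ degreewise to the covering of module categories and using that all the relevant functors ($F_\la$, its adjoint, the twists) are exact and preserve injectives, so they descend to triangle functors on $K(\Inj-)$, and the degreewise covering identity lifts. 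Once this is in place the rest is bookkeeping through the commutative square of Proposition \ref{ind-0} and the dictionary (radical square zero $\hat A_n$-module $=$ indecomposable $\bar{\hat A}_n$-module) already established for $\La$.
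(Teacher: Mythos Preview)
Your proposal conflates this proposition with the subsequent results (Lemma \ref{ind-2}, Proposition \ref{indec}, and the proof of Theorem \ref{main1}) and, in doing so, introduces an actual error. You assert that $A_n$ is symmetric so that Proposition \ref{ebd1} yields a functor $\Phi_n\colon K(\Inj A_n)\rta\uMod\hat A_n$. This is false for $n\geq 2$: the Nakayama permutation of $A_n=kC_n/I_n$ is the nontrivial cyclic shift $i\mapsto i+1$, so $A_n$ is Frobenius but not symmetric, and the equivalence $C(\Mod A_n)\cong\Mod\hat A_n$ underlying Proposition \ref{ebd1} is unavailable. The paper never applies Proposition \ref{ebd1} beyond $\La=k[x]/(x^2)$; for general $A_n$ it relies on the Krause--Le embedding $F^{A_n}$ of Proposition \ref{inj-ebding}, which needs no symmetry hypothesis.

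More fundamentally, the entire covering apparatus is irrelevant to \emph{this} proposition. The paper's argument is a three-line direct one: the functor $F\colon K(\Inj A_n)\rta\uMod\hat A_n$ from Proposition \ref{inj-ebding} is fully faithful, hence sends indecomposables to indecomposables; and an object of $\uMod\hat A_n$ is indecomposable iff it is represented by an indecomposable non-projective $\hat A_n$-module, which is the same thing as an indecomposable $\bar{\hat A}_n$-module (since $\bar{\hat A}_n=\hat A_n/\soc\hat A_n$). No covering, no pushdown, no Galois identity, no appeal to Proposition \ref{dul}. Your machinery---showing $X$ is a summand of $F_\la(Y)$, classifying indecomposables over $A_1$, transporting back---belongs to the \emph{later} steps where one actually pins down which $\bar{\hat A}_n$-modules arise; here one only needs that the correspondence lands in the right category. (Incidentally, your directions are tangled: the covering is $(C_n,I_n)\rta(C_1,I_1)$, so the pushdown goes $F_\la\colon K(\Inj A_n)\rta K(\Inj A_1)$, not the reverse as you write, and the Galois identity you invoke should read $F^\cdot F_\la(X)\cong\bigoplus_{g\in G}{}^gX$ for $X$ over the cover.)
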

\begin{proof}
The fully faithful embedding $F:  K(\Inj A_n)\rta \uMod \hat{A}_n$ identifies $  K(\Inj A_n)$ with a localizing subcategory of $\uMod \hat{A}_n$. Thus every indecomposable object can be viewed as
 an indecomposable object in $\uMod\hat{A}_n$ under the functor $F$. It is suffice to consider the indecomposable objects in $\uMod \hat{A}$.
  We know that two $\hat{A}_n$-modules $M,N$, are isomorphic in $\uMod \hat{A}_n$ if and only if there exist projective-injective $\hat{A}_n$-modules $P,Q$ such that
$M\oplus P\cong N\oplus Q$. Furthermore, the indecomposable $\bar{\hat{A}}$-modules are just the non- projective indecomposable $\hat{A}$-modules. It follows that indecomposable objects in $\uMod \hat{A}_n$ corresponds to indecomposable modules of $\bar{\hat{A}}_n$.
\end{proof}
\begin{lem}\label{ind-2}
 Let $F_{\la}:\Mod \hat{A}_n\rta \Mod \hat{A}_1$ be the forgetful functor induced by the covering of bounded quivers  $\fun {\pi}{(C_n,I_n)}{(C_1,I_1)}$. If $X$ is an indecomposable module in $\Mod \bar{\hat{A}}_n$,
then the module
 $Y=F_{\la}X$ is indecomposable in $\Mod\bar{\hat{A}}_1$.
\end{lem}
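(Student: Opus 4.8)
The plan is to reduce the statement to a combinatorial fact about representations of the separated quivers and the known structure theory of type $\aif$ (Proposition~\ref{reps}). First I would pass, via Proposition~\ref{ind-1} and Corollary~\ref{rd1}, from indecomposable $\bar{\hat A}_n$-modules to separated representations of the separated quiver $\hat C_n^s$ of $\hat C_n$; since $\bar{\hat A}_n$ is radical square zero, an indecomposable $X$ corresponds under $T'$ to an indecomposable separated representation of $\hat C_n^s$. The same applies on the base: indecomposable $\bar{\hat A}_1$-modules correspond to indecomposable separated representations of $\hat C_1^s=\aif$. The key point is that the covering $\fun{\pi}{(\hat C_n,\hat I_n)}{(\hat C_1,\hat I_1)}$ of bounded quivers lifts to a covering $\hat C_n^s\to\hat C_1^s$ of the separated quivers, compatible with $T'$, so that the forgetful (push-down) functor on $\bar{\hat A}_n$-modules matches the push-down functor on representations of $\hat C_n^s$. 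Thus it suffices to show: if $V$ is an indecomposable separated representation of $\hat C_n^s$, then its push-down along $\hat C_n^s\to\aif$ is indecomposable.

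For that, I would invoke the structure of $\aif$: by Proposition~\ref{reps} every indecomposable representation of $\aif$ is \emph{thin}, i.e. has dimension vector with entries in $\{0,1\}$, and its support is an interval. The classical criterion (Gabriel's covering theory, \cite{ga81,gre83}) says that the push-down $F_\la$ of an indecomposable $\tilde B$-module is indecomposable provided it is not isomorphic to a proper $G$-shift of itself; equivalently, the $G$-orbit of $V$ is trivial on the relevant piece. Here $G=\mabb Z_n$ acts freely on $\hat C_n$, hence freely on the vertex set of $\hat C_n^s$ (the separated quiver just doubles the vertices and the $G$-action carries over). Since $V$ is a separated indecomposable, hence (pulling back the structure theory through the covering $\hat C_n^s\to\aif$, whose total space is again a disjoint union of copies of type-$\aif$ quivers) again thin with interval support, the only way $V\cong g\cdot V$ for some $1\neq g\in G$ would be if the interval support of $V$ were $G$-stable; but a finite interval cannot be invariant under a nontrivial translation, and an infinite (half-line or full-line) support would force $V$ itself to be a pull-back of a representation of $\aif$ that is supported on a full $G$-orbit — and one checks directly that such a $V$ is still indecomposable with $\End k$, whose push-down is the corresponding thin indecomposable of $\aif$. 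In all cases $V\not\cong g\cdot V$ for $g\neq e$, so $F_\la V$ is indecomposable by the covering criterion.

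Concretely, the cleanest route avoids case analysis: I would use that $F_\la$ is exact and that $\End_{\hat A_1}(F_\la V)\cong\bigoplus_{g\in G}\Hom_{\hat A_n}(V, g\cdot V)$ (the standard adjunction/Clifford-theory identity for push-down along a free $G$-cover, \cite{ga81}). Since $V$ is indecomposable with $\End_{\hat A_n}(V)=k$ (all indecomposables here are thin, by the type-$\aif$ classification lifted through the cover), it is enough to show $\Hom_{\hat A_n}(V, g\cdot V)=0$ for $g\neq e$; then $\End_{\hat A_1}(F_\la V)=k$, which is local, so $F_\la V$ is indecomposable. The vanishing of $\Hom_{\hat A_n}(V, g\cdot V)$ for $g\neq e$ follows because $V$ and $g\cdot V$ are thin with interval supports that are translates of one another by the nontrivial $\mabb Z_n$-shift on $\hat C_n$: on the overlap the two representations look like the identity-connected interval representations $k_{ab}$ from Proposition~\ref{reps}, and a nonzero morphism between two such whose supports are genuinely different translates is impossible (exactly the argument used in the indecomposability proof in Proposition~\ref{reps}).

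\textbf{Main obstacle.} The delicate point is the bookkeeping that makes ``$V$ is thin with interval support'' legitimate for separated representations of $\hat C_n^s$: one must verify that the cover $\hat C_n^s\to\aif$ really does present $\hat C_n^s$ as a disjoint union of type-$\aif$ pieces (so that Proposition~\ref{reps} applies vertex-by-vertex), and that the $\mabb Z_n$-action is free on those pieces in a way compatible with the functors $T',T$ of Proposition~\ref{rad3}. Once that compatibility is pinned down, the computation $\Hom_{\hat A_n}(V,g\cdot V)=0$ for $g\neq e$ is the same interval-overlap argument already carried out in the proof of Proposition~\ref{reps}, and the conclusion is immediate.
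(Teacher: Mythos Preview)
Your proposal is correct in outline but substantially more elaborate than necessary, and the paper's argument is a one-liner that you in fact identify in your ``Main obstacle'' paragraph without realising it is the whole proof.

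The paper simply observes that the separated quiver $\hat C_n^s$ of $(\hat C_n,\bar{\hat I}_n)$ is a \emph{disjoint union of $n$ copies of $\aif$}. Hence an indecomposable radical-square-zero $\bar{\hat A}_n$-module, viewed via Proposition~\ref{sepa}/\ref{rad3} as an indecomposable separated representation of $\hat C_n^s$, is supported on a single $\aif$-component; under the covering that component maps isomorphically onto $\hat C_1^s=\aif$, so $F_\la X$ is the same thin interval representation, which is indecomposable. No covering criterion, no Clifford identity, no Hom-vanishing analysis is required.

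What you set up does work, but only because the structural fact above makes it trivial: since $G=\mabb Z_n$ permutes the $n$ components of $\hat C_n^s$ freely, for $g\neq e$ the representations $V$ and $g\cdot V$ have \emph{disjoint} supports, so $\Hom(V,g\cdot V)=0$ is immediate and your endomorphism computation collapses to $\End(F_\la V)=k$. Your case analysis about finite versus infinite interval supports, and the worry that an infinite support could be $G$-stable, is beside the point: the $G$-action shifts the $C_n$-index $j$, not the position along a fixed $\aif$-component, so it moves $V$ to a different component entirely. In short, the bookkeeping you flag as the delicate step (that $\hat C_n^s$ decomposes into $\aif$-pieces compatibly with the cover) is not an obstacle but the entire content of the lemma; once it is stated, everything else you wrote is redundant.
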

\begin{proof}
Since $\bar{\hat{A}}_n$ is a radical square zero algebra, therefore there is an bijection between indecomposable modules in $\Mod \bar{\hat{A}}_n$ and indecomposable modules in $\Mod \bar{\hat{A}}^s_n$, where $\bar{\hat{A}}^s_n$ is the
separated algebra of $\bar{\hat{A}}_n$ by Proposition \ref{sepa}. The separated quiver $\hat{C}^s_n$ of $(\hat{C}_n,\bar{\hat{I}}_n)$ is just the union of $n$-copies of quiver $\aif$. Every indecomposable representation of $\hat{C}^s_n$ is an indecomposable representation
of $\aif$, which corresponds to an indecomposable module in $\Mod\bar{\hat{A}}_1$ by Corollary \ref{rd1}.

\end{proof}

\begin{prop}\label{indec}
 The pushdown functor $F_{\la}:  K(\Inj A_n)\rta   K(\Inj A_1)$ preserves indecomposable objects.
\end{prop}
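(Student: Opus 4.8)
The plan is to detect indecomposability of $F_\la X$ after transporting it along the fully faithful embedding $F^{A_1}\colon K(\Inj A_1)\to\uMod\hat A_1$ of Proposition~\ref{inj-ebding}, and then to use the commutative square of Proposition~\ref{ind-0} to convert the resulting question in $\uMod\hat A_1$ into the module-level statement already settled in Lemma~\ref{ind-2}. Thus no new computation should be required; the argument is a diagram chase through the results of this section.

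Here are the steps I would carry out. Let $X$ be an indecomposable object of $K(\Inj A_n)$. Since $F^{A_n}$ is fully faithful, $F^{A_n}(X)$ is indecomposable in $\uMod\hat A_n$, and by Proposition~\ref{ind-1} it is the stable class of an indecomposable $\bar{\hat A}_n$-module $M$. Applying the pushdown on module categories, Lemma~\ref{ind-2} shows that $F_\la M$ is an indecomposable $\bar{\hat A}_1$-module, so by the correspondence established in Proposition~\ref{ind-1} (case $n=1$) its stable class is an indecomposable object of $\uMod\hat A_1$. On the other hand, Proposition~\ref{ind-0} supplies a natural isomorphism $F^{A_1}\circ F_\la\cong F_\la\circ F^{A_n}$ of functors $K(\Inj A_n)\to\uMod\hat A_1$, so evaluating at $X$ gives
\[
F^{A_1}(F_\la X)\;\cong\;F_\la\bigl(F^{A_n}(X)\bigr),
\]
and the right-hand side is the image in $\uMod\hat A_1$ of $F_\la M$, hence indecomposable (and in particular nonzero). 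Finally, $F^{A_1}$ is additive and, being fully faithful, sends nonzero objects to nonzero objects, so a nontrivial decomposition $F_\la X\cong Y_1\oplus Y_2$ would produce a nontrivial decomposition $F^{A_1}(F_\la X)\cong F^{A_1}(Y_1)\oplus F^{A_1}(Y_2)$, a contradiction. Therefore $F_\la X$ is indecomposable.

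The one point that genuinely requires verification — and which I expect to be the only real work, though not a serious obstacle — is the compatibility of the identification ``indecomposable object of $\uMod\hat A_i\leftrightarrow$ indecomposable $\bar{\hat A}_i$-module'' with the pushdown on both sides of the square. Concretely one checks that the covering $\pi\colon(\hat C_n,\hat I_n)\to(\hat C_1,\hat I_1)$ of repetitive quivers descends to a covering $\bar{\hat A}_n\to\bar{\hat A}_1$ of the radical square zero quotients — which holds because $\pi$ carries the socle-defining relations $\a\b=\b\a$ of $\hat A_n$ onto those of $\hat A_1$ — and that the module-level pushdown $F_\la\colon\Mod\hat A_n\to\Mod\hat A_1$ takes projective--injective $\hat A_n$-modules to projective--injective $\hat A_1$-modules, a standard property of covering functors, which is precisely what allows $F_\la$ to induce the functor $\uMod\hat A_n\to\uMod\hat A_1$ appearing in Proposition~\ref{ind-0}. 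Once this bookkeeping is in place, the chain above finishes the proof.
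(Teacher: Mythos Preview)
Your proof is correct and follows essentially the same route as the paper: invoke the commutative square of Proposition~\ref{ind-0}, use Proposition~\ref{ind-1} together with Lemma~\ref{ind-2} to see that $F_\la F^{A_n}(X)$ is indecomposable in $\uMod\hat A_1$, and then conclude by the full faithfulness of $F^{A_1}$. The compatibility bookkeeping you flag at the end is left entirely implicit in the paper's argument, so your version is actually the more careful of the two.
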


\begin{proof}
 By  Proposition \ref{ind-0},  we have $F_{\la}F^{A_n}\cong F^{A_1}F_{\la}:K(\Inj A_n)\rta \uMod \hat{A}_1.$ Assume that $X\in   K(\Inj A_n)$ is indecomposable and $Y=F_{\la}(X)=Y_1\oplus Y_2$ is a decomposition of $Y\in   K(\Inj A_n)$
with $Y_i\neq 0$ for $i=1,2$. We have that $F_{\la}F^{A_n}(X)\in \uMod\hat{A}_1$ is indecomposable by Proposition \ref{ind-1} and Lemma \ref{ind-2}. On the other hand, $F_{\la}F^{A_n}(X)\cong F^{A_1}F_{\la}(X)=F^{A_1}(Y_1\oplus Y_2)$
is decomposable in $\uMod \hat{A}_1$. This is a contradiction.
\end{proof}

Now, we can give a proof of the main result.
\begin{proof}[ Proof of Theorem \ref{main1}]
If $X\in   K(\Inj A_n)$ with $X^i=\oplus_k I_k$ for some $i\in \mabb Z$, then $F_{\la}(X)\in   K(\Inj A_1)$ with $F_{\la}(X)^i$ being $\oplus_k A_1$. Therefore $F_{\la}(X)$ is decomposable in $  K(\Inj A_1)$
by Proposition \ref{dul}.
 From Proposition \ref{indec}, this implies that $X$ is decomposable in $  K(\Inj A_n)$ .
Thus $X\in  K(\Inj A_n)$ is indecomposable, we have that $F_{\la}(X^i)$ is either $A_1$ or 0 for any $i\in\mabb Z$. It follows that $X^i$ is either $I_k$ or $0$ for some $\leq k\leq n$ and any $i\in\mabb Z$.

 Consider homomorphisms between indecomposable injective $A_n$-modules $I_j$ for $1\leq j\leq n$, we have that
\[\Hom_{A_1}(I_j,I_k)=\begin{cases} (id_{I_j}) &\text{if }j=k;\\
  (d_j) &\text{if }j+1=k; \text{ or } j=n,k=1;
\\0&\text{otherwise},\end{cases}\]
where $d_j:I_j\rta I_{j-1}$ is the composition of $I_j\rta I_j/\soc I_j\cong \rad I_{j-1}\hookrightarrow I_{j-1}$.
From this, the periodic complex $I^{\bullet}$ defined by the following with $I_1$ in the degree 0,

\[\xymatrix{\ldots\ar[r]&I_n\ar[r]^{d_n}&I_{n-1}\ar[r]^{d_{n-1}}&I_{n-2}\ar[r]^{d_{n-2}}&\ldots\ar[r]&I_1\ar[r]^{d_1}&I_n\ar[r]&\ldots}\]
and its shifts $I^{\bu}[r]$ for $r\in \mabb Z$ are exactly the unbounded indecomposable complexes in $K(\Inj A_n)$. Moreover, $I^{\bu}[r]=I^{\bu}[r+n]$ for $r\in \mabb Z$.

Assume that $X\in K(\Inj A)$ is indecomposable and bounded below but not above, i.e there exists $l\in Z$ such that $X^p\neq 0$ for all $p\geq l$ and $X^q=0$ for all $q<l$ . With out loss generality, let $X^l=I_1$, we have
that $X=\sg_{\geq l}I^{\bu}$. In general, $X$ is of the form $\sg_{\geq l}(I^{\bu}[r])$, where $0\leq r\leq n-1$, denoted by $I_{l',+\infty}$. Similarly, if $X\in K(\Inj A_n)$ is indecomposable and bounded above but not below,
then $X=\sg_{\leq m}(I^{\bu}[r])$ where $0\leq r\leq n-1$,denoted by $I_{-\infty,m'}$.

Assume that  $X\in K(\Inj A_n)$ is indecomposable and bounded.  It follows that $X^i\neq 0$ if and only if $l \leq i\leq m$ for some $l,m\in\mabb Z$. Without loss of generality,let $X^l=I-1$, we have that
$X=\sg_{\leq m}\sg_{\geq l}(I^{\bu}[-l])$. In general, $X$ is of the form $I_{l,m}[r]=\sg_{\leq m}\sg_{\geq l}(I^{\bu}[r])$, where $0\leq r\leq n-1$.
\end{proof}

\section{The Ziegler spectrum of $  K(\Inj A)$}
\subsection{Ziegler spectrum of triangulated categories}

Let $\Mod \mac T^c$ be the category consisting of all contravariant additive functor from $\mac T^c$ to $\Ab$, the category of all abelian groups. It is a locally coherent Grothendieck category \cite{he97}.
 Moreover, the objects of form $\Hom(-,X)$ for $X\in\mac T^c$ are projective objects. The subcategory $\mod \mac T^c$ consists of all finitely presented objects of $\Mod \mac T^c$ is abelian category.
The Yoneda embedding
\[h_{\mac T}:\mac T\rta\Mod \mac T^c,X\mapsto H_X=\Hom(-,X)|_{\mac T^c}\]
sends every object $X\in\mac T$ to an object $H_X$ in the abelian category $\Mod \mac T^c$.

The following result characterizes injective objects in the abelian category $\Mod \mac T^c$ \cite[Theorem 1.8]{kra0} .

 \begin{prop}\label{pinj1}Let $\mac T$ be a compactly generated triangulated category, the following are equivalent,
\begin{enumerate}
\item[(1)]  $H_X=\Hom(-,X)|_{\mac T^c}$ is injective in $\Mod \mac T^c$.
\item[(2)] For every set $I$, the summation map $X^{(I)}\rta X$ factors through the canonical map $X^{(I)}\rta X^I$ from the coproduct to the product.
\item[(3)] The map $\Hom(Y,X)\rta \Hom(H_Y,H_X),\phi\mapsto \Hom(-,\phi)|_{\mac T^c}$, is an isomorphism for all $Y\in \mac T$.
\end{enumerate}
\end{prop}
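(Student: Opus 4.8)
The plan is to recognise that (1), (2), (3) all express pure-injectivity of $X$ in $\mac T$, and to prove the implications $(1)\Rightarrow(3)\Rightarrow(1)$ together with $(1)\Rightarrow(2)\Rightarrow(1)$ inside the abelian category $\mac A:=\Mod\mac T^c$ using the restricted Yoneda functor $h=H_{(-)}\colon\mac T\to\mac A$. First I would collect the standard facts about $h$: it is homological; it preserves all products (computed pointwise in $\mac A$) and all coproducts (here compactness of the objects of $\mac T^c$ is used); and its restriction to $\mac T^c$ is the fully faithful Yoneda embedding, so that $\Hom_{\mac A}(H_P,M)\cong M(P)$ for every coproduct $P$ of compact objects and every $M\in\mac A$, whence $\Hom_{\mac T}(P,Y)\to\Hom_{\mac A}(H_P,H_Y)$ is bijective whenever $P$ is such a coproduct and $Y\in\mac T$ is arbitrary. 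Since $\mac A$ has enough projectives — the projectives being retracts of coproducts of representables $H_C$, $C\in\mac T^c$ — every $H_Y$ admits a projective presentation $H_{P_1}\xrightarrow{H_f}H_{P_0}\xrightarrow{H_q}H_Y\to 0$ with $P_0,P_1$ coproducts of compacts and $f\colon P_1\to P_0$, $q\colon P_0\to Y$ honest morphisms of $\mac T$ satisfying $qf=0$. As a preliminary step I would show $H_Y$ is always fp-injective, i.e. $\Ext^1_{\mac A}(F,H_Y)=0$ for $F$ finitely presented: writing $F=\Coker(H_g)$ with $g\colon C_1\to C_0$ in $\mac T^c$ and completing $g$ to a triangle $C_1\to C_0\to C_2\to\Sg C_1$ in $\mac T^c$, one uses homologicality of $h$ to present $F$ as the subobject $\Ker(H_{C_0}\to H_{C_2})$ of the representable $H_{C_0}$, and reduces the relevant extension problem to the vanishing of a morphism out of the compact object $\Sg^{-1}C_2$, which holds by Yoneda.

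For $(1)\Rightarrow(3)$ I would argue as follows. Surjectivity of $\Hom(Y,X)\to\Hom_{\mac A}(H_Y,H_X)$: given $\psi\colon H_Y\to H_X$, lift $\psi H_q$ along Yoneda to $a\colon P_0\to X$; since $H_X$ is injective, $\Hom_{\mac A}(-,H_X)$ is exact, and tracing this through the presentation and through the triangle $K\xrightarrow{k}P_0\xrightarrow{q}Y\to\Sg K$ on $q$ shows that $a\circ k\colon K\to X$ is a phantom map; the key input — that injectivity of $H_X$ forces every phantom morphism into $X$ to vanish — then gives $ak=0$, so $a=bq$ for some $b\colon Y\to X$ with $H_b=\psi$. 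Faithfulness: if $H_b=0$ for $b\colon Y\to X$, complete $b$ to a triangle $Y\xrightarrow{b}X\xrightarrow{u}Z\to\Sg Y$; from $H_b=0$ and homologicality this is a pure triangle, so $0\to H_X\xrightarrow{H_u}H_Z\to H_{\Sg Y}\to 0$ is exact; injectivity of $H_X$ splits it, and lifting the splitting to $\mac T$ — again via phantom-vanishing together with the surjectivity just shown — makes $u$ a split monomorphism, hence $b=0$.

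For the remaining implications: $(3)\Rightarrow(1)$ — using the projective presentations above together with the Yoneda-type isomorphism supplied by (3) applied to the (generally non-compact) objects arising as fibres of maps between coproducts of compacts, one checks $\Ext^1_{\mac A}(F,H_X)=0$ for every finitely presented $F$, so $H_X$ is injective. $(1)\Rightarrow(2)$ — $H_X$ injective is in particular pure-injective in $\mac A$, so by the Jensen--Lenzing criterion (in the form valid for locally finitely presented Grothendieck categories) the summation map $H_X^{(I)}\to H_X$ factors through the canonical map $H_X^{(I)}\to H_X^{I}$ in $\mac A$ for every set $I$; since $h$ is full and faithful at $X$ by $(1)\Rightarrow(3)$, and $H_{X^{(I)}}\cong H_X^{(I)}$, $H_{X^{I}}\cong H_X^{I}$, this factorization descends to the required factorization of $X^{(I)}\to X$ through $X^{(I)}\to X^{I}$ in $\mac T$. $(2)\Rightarrow(1)$ — applying $h$ to the factorizations in (2) yields the corresponding factorizations in $\mac A$ for all $I$, which by Jensen--Lenzing is pure-injectivity of $H_X$ in $\mac A$; since $H_X$ is also fp-injective by the preliminary step, and an object that is both fp-injective and pure-injective is injective (its embedding into an injective envelope is then a split pure monomorphism), $H_X$ is injective.

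The main obstacle is the one non-formal ingredient, used in $(1)\Rightarrow(3)$: that injectivity of $H_X$ in $\mac A$ forces every phantom morphism into $X$ to vanish — equivalently, that $H_X$ being injective is the same as $X$ being pure-injective in $\mac T$. The difficulty is that, in contrast to the finitely presented situation, the obstruction here sits on a non-compact object, so it cannot be disposed of by a Yoneda argument inside $\mac A$; handling it needs the purity theory of compactly generated triangulated categories — pure-injective envelopes, the phantom ideal, and definable subcategories. I would therefore isolate two lemmas — (a) an object of $\mac A$ which is fp-injective and over which all summation maps extend is injective, provable by the injective-envelope argument above, and (b) $H_X$ is injective if and only if $X$ is pure-injective in $\mac T$, which I would prove or cite from the purity machinery — and then deduce the proposition from these together with the elementary properties of $h$ listed above.
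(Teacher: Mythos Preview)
The paper does not prove this proposition at all; it is quoted verbatim from Krause \cite[Theorem~1.8]{kra0} and no argument is given. So there is nothing in the paper to compare against --- only the original source.

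Your treatment of $(1)\Leftrightarrow(2)$ is the standard one and is correct: every $H_Y$ is fp-injective, Jensen--Lenzing characterises pure-injectivity in $\mac A$ by the summation-map condition, and fp-injective plus pure-injective equals injective in a locally finitely presented Grothendieck category. The problems lie in the other two implications.

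For $(3)\Rightarrow(1)$ you write that one checks $\Ext^1_{\mac A}(F,H_X)=0$ for every \emph{finitely presented} $F$ and conclude that $H_X$ is injective. That deduction is invalid: vanishing of $\Ext^1$ against finitely presented objects is only fp-injectivity, and you have already shown this holds for \emph{every} $H_Y$. Since $\Mod\mac T^c$ is not locally noetherian in general, fp-injective does not imply injective, so this step proves nothing.

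For $(1)\Rightarrow(3)$ you yourself isolate the difficulty: the assertion that injectivity of $H_X$ forces every phantom into $X$ to vanish is exactly the faithfulness half of (3), so invoking it is circular. Repackaging it as your lemma~(b), ``$H_X$ injective iff $X$ pure-injective in $\mac T$'', does not help, because depending on which definition of pure-injectivity in $\mac T$ one adopts this is either tautological or is precisely the content of the proposition.

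The tool you are missing is Brown representability. For $(1)\Rightarrow(3)$: if $H_X$ is injective then $Y\mapsto\Hom_{\mac A}(H_Y,H_X)$ is cohomological (exactness of $\Hom_{\mac A}(-,H_X)$ applied to the long exact image under $h$ of a triangle) and takes coproducts to products, hence is representable by some $X'$; the comparison map $X\to X'$ is an isomorphism on compacts, so an isomorphism, and (3) follows. For $(3)\Rightarrow(1)$: the same Brown argument shows every injective $E\in\mac A$ is of the form $H_Z$; embedding $H_X$ in its injective envelope $H_Z$, condition~(3) lifts the inclusion to a map $X\to Z$ in $\mac T$, which one then shows is split mono, whence $H_X$ is a summand of the injective $H_Z$. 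This is how the proof in the cited reference actually runs.
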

An object $X$ in $\mathcal{T}$ is called \emph{pure-injective} if it satisfies the above equivalent conditions.
There is a class of special pure injective objects, endofinite object, which are analogues of endofinite modules \cite{cb92}.
\begin{defn}
 Let $\mac T$ be a compactly generated triangulated category. An object $E\in \mac T$ is endofinite if the $\End_{\mac T} E$-module $\Hom(X,E)$ has finite length for any $X\in\mac T^c$.
\end{defn}
Endofinite objects of triangulated category have very nice decomposition properties, which can be seen in the following result \cite[Proposition 1.2]{kra99}.
\begin{prop}
 Let $\mac T$ be a compactly generated triangulated category. An endofinite object $X\in \mac T$ has a decomposition $X=\coprod_iX_i$ into indecomposable objects with $\End X_i$ is local, and the decomposition is
unique up to isomorphism.
\end{prop}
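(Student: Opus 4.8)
The plan is to split the statement into two ingredients and to establish the first via the functor category. Recall from the discussion above that an endofinite object is in particular pure-injective. The two claims are: \textbf{(I)} $X$ is $\Sigma$-pure-injective, i.e.\ the coproduct $X^{(S)}$ is pure-injective for every set $S$; and \textbf{(II)} a $\Sigma$-pure-injective object of a compactly generated triangulated category is a coproduct of indecomposable objects with local endomorphism rings, and such a decomposition is unique up to isomorphism. Granting both, the proposition is immediate. In (II) the uniqueness is the Krull-Schmidt-Azumaya theorem once existence is known, because every indecomposable pure-injective object $Y$ has local endomorphism ring: $H_Y$ is an indecomposable injective object of the Grothendieck category $\Mod\mac T^c$ (a nontrivial decomposition of $H_Y$ would give a nontrivial idempotent of $\End(H_Y)\cong\End_{\mac T}(Y)$ by Proposition \ref{pinj1}), and an indecomposable injective object of a Grothendieck category has local endomorphism ring.

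For (I), I would pass to $\mac A:=\Mod\mac T^c$, which is a locally coherent Grothendieck category by \cite{he97}, via the Yoneda functor $h_{\mac T}\colon\mac T\to\mac A$, $Y\mapsto H_Y$. By Proposition \ref{pinj1}, $H_X$ is injective in $\mac A$ and $\Gamma:=\End_{\mac T}(X)\cong\End_{\mac A}(H_X)$, while endofiniteness of $X$ says precisely that $H_X(C)=\Hom_{\mac T}(C,X)$ has finite length over $\Gamma$ for every compact $C$. The pp-definable subgroups of $H_X(C)$ — in the model theory of $\mac A$, equivalently of $\mac T$ — are $\Gamma$-submodules of $H_X(C)$, hence form a sublattice of the lattice of $\Gamma$-submodules of a finite-length $\Gamma$-module, and so a finite lattice. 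Since the descending chain condition on pp-definable subgroups characterises $\Sigma$-pure-injectivity, $X$ is $\Sigma$-pure-injective.

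For (II), I would run the classical structure theory of $\Sigma$-pure-injective modules in this setting. First, every nonzero direct summand $N$ of $X$ has an indecomposable direct summand: $N$ inherits $\Sigma$-pure-injectivity, hence the descending chain condition on pp-definable subgroups, and a standard argument (pass to a minimal nonzero pp-definable piece and use the injectivity of $H_N$ to split off a summand) produces an indecomposable summand, whose endomorphism ring is local by the first paragraph. Then choose by Zorn's lemma a maximal orthogonal family $\{e_i\}_{i\in I}$ of idempotents of $\End_{\mac T}(X)$ with each $X_i:=e_iX$ indecomposable. As $X$ is $\Sigma$-pure-injective, $\coprod_iX_i$ is pure-injective — it is a direct summand of $X^{(I)}$ — and the canonical pure monomorphism $\coprod_iX_i\to X$ therefore splits; if its complementary summand were nonzero it would again contain an indecomposable summand, contradicting maximality. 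Hence $X=\coprod_{i\in I}X_i$ with each $\End X_i$ local, which is (II).

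The step I expect to be the real obstacle is the infinite assembly in (II): passing from ``every nonzero summand contains an indecomposable summand'' to a genuine decomposition of $X$ requires that the coproduct over an arbitrary, possibly infinite, orthogonal family of idempotents is again a direct summand, and this is precisely the extra strength of $\Sigma$-pure-injectivity over plain pure-injectivity — without it only finite partial sums split off. The subsidiary technical point is the transcription of the model-theoretic machinery of pp-definable subgroups and its link to $\Sigma$-pure-injectivity from module categories to $\mac T$ through $\mac A=\Mod\mac T^c$; this is by now standard, but it is where care is needed, the formal reduction itself being routine.
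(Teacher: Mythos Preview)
The paper does not give a proof of this proposition; it is simply quoted as \cite[Proposition~1.2]{kra99}. Your argument is correct and is, in outline, the standard one behind that reference: endofiniteness forces the descending chain condition on pp-definable subgroups of each $H_X(C)$, hence $\Sigma$-pure-injectivity, and then one assembles a decomposition.

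One remark on streamlining part (II). Rather than running the Zorn argument directly in $\mac T$ and appealing to a ``minimal nonzero pp-definable piece'' to produce indecomposable summands, you can transfer the whole question to $\mac A=\Mod\mac T^c$: since $H_X$ is $\Sigma$-injective in the Grothendieck category $\mac A$, the classical result of Faith gives a decomposition $H_X\cong\bigoplus_i E_i$ with each $E_i$ indecomposable injective. Each $E_i$, being a direct summand of $H_X$, corresponds to an idempotent of $\End_{\mac A}(H_X)\cong\End_{\mac T}(X)$, which splits in $\mac T$ to give $X_i$ with $H_{X_i}\cong E_i$; because $h_{\mac T}$ preserves coproducts and reflects isomorphisms, the induced map $\coprod_i X_i\to X$ is an isomorphism. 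This avoids the somewhat informal step you flagged, though your direct argument also goes through once one observes that the canonical map $\coprod_i X_i\to X$ is a pure monomorphism (evaluate at compacts and use orthogonality of the idempotents).
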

An triangulated category $\mac T$ is called \emph{pure semisimple} if every object is pure injective. Analogue of \cite[Theorem 12.20]{bel00}, we have the following result.
\begin{prop}
 For a artin algebra $A$, $  K(\Inj A)$ is pure semisimple if and only if $A$ is derived equivalent to a hereditary algebra of Dynkin type .
\end{prop}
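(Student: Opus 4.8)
The plan is to show that each of the two conditions in the statement is equivalent to a single intermediate one: that $\db A$ is of \emph{finite representation type}, meaning that $\db A$ has only finitely many isomorphism classes of indecomposable objects up to the shift $[1]$. Since $K(\Inj A)$, the triangulated category $\db A$, and the property of being derived equivalent to a hereditary algebra of Dynkin type all decompose over the blocks of $A$, I would first reduce to the case where $A$ is connected.

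\textbf{Step 1: $K(\Inj A)$ is pure semisimple if and only if $\db A$ is of finite representation type.} For this I would adapt the argument of \cite[Theorem 12.20]{bel00} from $D(\Mod\Lambda)$ to $K(\Inj A)$. The relevant structure is available: $K(\Inj A)$ is compactly generated with $K^c(\Inj A)\cong\db A$ \cite{kr05}, the category $\db A$ is a $\Hom$-finite Krull--Schmidt $k$-category, $\Mod(\db A)$ is a locally coherent Grothendieck category \cite{he97} whose injective objects are precisely the functors $\Hom(-,X)|_{\db A}$ with $X\in K(\Inj A)$ pure-injective (Proposition \ref{pinj1}), and the decomposition theory of endofinite and pure-injective objects \cite{cb92,kra99} applies. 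If $\db A$ is of finite representation type, one argues --- as in \cite[Theorem 12.20]{bel00} --- that every object of $K(\Inj A)$ is pure-injective, the main points being that there are only finitely many indecomposable compacts up to shift and that the indecomposable objects of $K(\Inj A)$ are endofinite, hence $\Sigma$-pure-injective; conversely, if $\db A$ has infinitely many indecomposables up to shift, Beligiannis' construction produces an object of $K(\Inj A)$ that is not a coproduct of objects with local endomorphism ring, and therefore is not pure-injective. I expect this step to be the main obstacle, since it requires checking that no part of Beligiannis' argument relied on the compact generators of $D(\Mod\Lambda)$ being perfect complexes rather than bounded complexes of injectives.

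\textbf{Step 2: $\db A$ is of finite representation type if and only if $A$ is derived equivalent to a hereditary algebra of Dynkin type.} For the ``if'' direction: if $\db A\cong\db H$ with $H$ hereditary of Dynkin type, then, $H$ being hereditary, every object of $\db H$ is the coproduct of the shifts of its cohomology, so the indecomposable objects of $\db A$ are exactly the modules $M[n]$ with $M$ an indecomposable $H$-module and $n\in\mabb Z$; by Gabriel's theorem there are only finitely many such $M$, so $\db A$ is of finite representation type. (One could also argue this direction via the derived invariance of $K(\Inj A)$ \cite{BK08}, reducing directly to $K(\Inj H)$.) For the ``only if'' direction: assume $\db A$ is of finite representation type. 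A nonzero indecomposable $X\in\db A$ is a bounded complex, so $h\bdim X$ is nonzero with finite support and is therefore not fixed by any nontrivial shift; hence distinct shifts of $X$ have distinct homological dimension vectors, and so for every $d$ there are only finitely many indecomposables $X$ with $h\bdim X=d$, i.e. $\db A$ is discrete. If $A$ were not derived hereditary of Dynkin type, Proposition \ref{dec1} would yield $\db A\cong\db{L(r,n,m)}$ for some $(r,n,m)\in\Omega$; but the classification of the indecomposable objects of $\db{L(r,n,m)}$ in \cite{bgs04} shows there are infinitely many of them up to shift (for instance string complexes of unbounded length winding around the oriented cycle of $Q(r,n,m)$), and in the radical square zero self-injective case this is already visible from Theorem \ref{main1}, the bounded truncations $I_{0,m}$, $m\geq 0$, being pairwise non-isomorphic indecomposable objects of $\db A$ lying in distinct shift-orbits. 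This contradicts finite representation type, so $A$ is derived equivalent to a hereditary algebra of Dynkin type, and the proof is complete.
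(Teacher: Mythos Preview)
Your approach is correct but takes a genuinely different route from the paper. The paper's proof is a short reduction to Beligiannis' theorem for $D(\Mod A)$ rather than an adaptation of it: one uses the general equivalence ``$\mathcal T$ pure semisimple $\Leftrightarrow$ $\Mod\mathcal T^c$ locally Noetherian'' \cite[Theorem 9.3]{bel00}, and then observes that the inclusion $i:K^b(\proj A)\hookrightarrow \db A$ induces a restriction $i^*:\Mod(\db A)\to \Mod(K^b(\proj A))$ with a fully faithful right adjoint, so that the locally Noetherian property passes down \cite[Chapter 5, Corollary 8.4]{p73}. This yields that $D(\Mod A)$ is pure semisimple, and Beligiannis' original \cite[Theorem 12.20]{bel00} finishes the forward direction; the converse is immediate because derived-Dynkin implies finite global dimension, whence $K(\Inj A)\simeq D(\Mod A)$.

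Your route instead isolates the intermediate condition ``$\db A$ has finitely many indecomposables up to shift''. This buys you an explicit characterisation of pure semisimplicity of $K(\Inj A)$ in terms of $\db A$ alone, and it makes contact with the derived-discrete classification of \cite{bgs04}, which is thematically natural in this paper. The cost is that Step~1 is not free: you must actually verify that Beligiannis' argument (in both directions) goes through with $\db A$ in place of $K^b(\proj A)$, and your own proposal flags this as the main obstacle. The paper's argument sidesteps this entirely by transferring the locally Noetherian condition along $i^*$, so that one never re-opens the proof of \cite[Theorem 12.20]{bel00}. Your Step~2 is fine, though note that invoking the full indecomposable classification of $\db{L(r,n,m)}$ from \cite{bgs04} is heavier machinery than what the paper needs.
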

\begin{proof}
The inclusion $i:K^b(\proj A)\hookrightarrow \db A$ induces a functor
\[i^*:\Mod (\db A)\rta \Mod (K^b(\proj A)),\]
which has fully faithful right adjoint functor.
$  K(\Inj A)$ is pure semisimple if and only if $\Mod (\db A)$ is locally Noetherian \cite[Theorem 9.3]{bel00}. By \cite[Chapter 5, Corollary 8.4]{p73}, this implies $\Mod (K^b(\proj A))$ is locally Noetherian. In this case,
 $  D(A)$ is pure semisimple, thus  $A$ is derived equivalence of algebra of Dynkin type.
\end{proof}

The \emph{Ziegler spectrum} $\Zg\mac T $ of  $\mac T$ have its points as the indecomposable injective objects in $\Mod \mac T^c$,
coinciding with the indecomposable pure injective objects in $\mac T$ by Proposition \ref{pinj1}. We will give a topological basis of $\Zg\mac T$, which are determined by the finitely presented objects
 in $\Mod\mac T^c$.

A functor $F:\mac T\rta \Ab$ is \emph{coherent} if there exists an exact sequence $\mac T(X,-)\rta\mac T(Y,-)\rta F\rta 0$, where $X,Y\in\mac T^c$.
We denote $coh \mac T$ the collection of all coherent functors from $\mac T$ to $\Ab$. It is an abelian category. There is an equivalence of categories \cite[Lemma 7.2]{kra002}
\[(\mod \mac T^c)^{op}\xrightarrow{\sim} coh\mac T.\]
We can define the open sets of Ziegler spectrum of $\mac T$ by inducing from the Serre subcategories of $coh\mac T$ or the Serre subcategories of $\mod \mac T^c$. For a Serre subcategory $\mac S$ of $coh\mac T$,
we define the set of the form
\[\mac O(\mac S)=\{X\in\Zg\mac T|C(X)\neq 0, \forall C\in\mac S\}\]
of subsets of $\Zg\mac T$. The sets of this form satisfy the axioms of open subsets in space $\Zg\mac T$.

\begin{lem}\cite{kra002}\label{opb}\begin{enumerate}
  \item The collection of subsets of $\Zg\mac T$
\[\mac O(C)=\{M\in\Zg\mac T|C(M)\neq 0, C\in coh\mac T\}\]
with $C\in coh\mac T$ satisfies the axioms of open subsets of $\Zg\mac T$.
\item The collection of open subsets of $\Zg\mac T$
\[\mac O(C)=\{M\in\Zg\mac T|\Hom(C,H_M)\neq 0, C\in \mod \mac T^c\}\]
with $C\in \mod \mac T^c$ is a basis of open subsets of $\Zg\mac T$.
\end{enumerate}
\end{lem}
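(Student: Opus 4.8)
The plan is to reduce the whole statement to the general theory of the Ziegler spectrum of a locally coherent Grothendieck category, applied to $\mac A=\Mod\mac T^c$. First I would recall that $\mac A$ is locally coherent with $\mac A^{fp}=\mod\mac T^c$ \cite{he97}, and that by Proposition \ref{pinj1} the indecomposable injective objects of $\mac A$ are exactly the objects $H_M=\Hom(-,M)|_{\mac T^c}$ with $M\in\Zg\mac T$; thus the underlying set of $\Zg\mac T$ is the set of isomorphism classes of indecomposable injectives of $\mac A$. In that generality one has the Ziegler topology whose basis of open sets is $\{\,\mac O(C)\mid C\in\mac A^{fp}\,\}$, where $\mac O(C)=\{E\text{ indecomposable injective}\mid\Hom_{\mac A}(C,E)\neq0\}$, and whose closed sets are cut out by Serre subcategories of $\mac A^{fp}$. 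Pulling this back along $M\mapsto H_M$ gives exactly statement (2).

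Next I would transport statement (2) to statement (1) through the duality $(\mod\mac T^c)^{\mathrm{op}}\xrightarrow{\ \sim\ }coh\mac T$ of \cite[Lemma 7.2]{kra002}. Under it a finitely presented $C\in\mod\mac T^c$ corresponds to the coherent functor $\tilde C\colon\mac T\rta\Ab$ with $\tilde C(X)=\Hom_{\Mod\mac T^c}(C,H_X)$; hence $\{M:\Hom(C,H_M)\neq0\}=\{M:\tilde C(M)\neq0\}=\mac O(\tilde C)$, and as $C$ runs over $\mod\mac T^c$ the functor $\tilde C$ runs over $coh\mac T$, so the two families of subsets in (1) and (2) literally coincide and "satisfying the axioms of open subsets" is to be read as "forming a basis of, hence generating, the Ziegler topology". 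This also matches them with the description of the topology via Serre subcategories recorded above, since for a Serre subcategory $\mac S\subseteq coh\mac T$ one has $\bigcup_{C\in\mac S}\mac O(C)=\mac O(\mac S)$, while each individual $\mac O(C)$ equals $\mac O(\langle C\rangle)$ for the Serre subcategory $\langle C\rangle$ generated by $C$ (one checks by structural induction that $\mac O$ is monotone for subquotients and sends a short exact sequence $0\rta A\rta B\rta C\rta0$ to the relation $\mac O(A)\cup\mac O(C)\supseteq\mac O(B)\supseteq\mac O(A),\mac O(C)$, using injectivity of $H_M$).

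Inside this picture the only thing that needs real argument — and the step I expect to be the main obstacle — is that $\{\mac O(C)\}$ is closed under finite intersections, so that it genuinely is a basis: $\emptyset=\mac O(0)$ and $\mac O(C)\cup\mac O(C')=\mac O(C\oplus C')$ are immediate from additivity of $\Hom(-,H_M)$ and closure of $\mod\mac T^c$ under finite direct sums, but showing $\mac O(C)\cap\mac O(C')$ is again a union of such sets is the heart of the matter. The mechanism I would use: an indecomposable pure-injective $M$ corresponds to a \emph{uniform} injective object $H_M$, so if $g\colon C\rta H_M$ and $g'\colon C'\rta H_M$ are nonzero their images meet in a nonzero subobject of $H_M$; local coherence of $\Mod\mac T^c$ then lets one find inside that intersection a nonzero finitely presented functor $D$ which, being finitely presented and a subquotient of $C$ and of $C'$, lies in $\langle C\rangle\cap\langle C'\rangle$, whence $M\in\mac O(D)\subseteq\mac O(C)\cap\mac O(C')$ and $\mac O(C)\cap\mac O(C')=\bigcup_{D\in\langle C\rangle\cap\langle C'\rangle}\mac O(D)$. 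Making that extraction precise — controlling finite presentation as one passes to images of maps into $H_M$ and to intersections of finitely generated subobjects — is exactly where the locally coherent hypothesis is essential, and it is the delicate bookkeeping carried out in \cite{he97,kra002} that I would invoke rather than reproduce.
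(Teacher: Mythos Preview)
The paper does not give its own proof of this lemma; it is stated with a citation to \cite{kra002} and used as input. So there is nothing in the paper to compare against beyond the surrounding remarks, which already record the two ingredients you use: that $\Mod\mac T^c$ is locally coherent with $\mod\mac T^c$ as its finitely presented objects, and that $(\mod\mac T^c)^{\mathrm{op}}\simeq coh\mac T$.

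Your outline is the standard route and is correct in substance: identify $\Zg\mac T$ with the indecomposable injectives of $\Mod\mac T^c$ via Proposition~\ref{pinj1}, import the Ziegler topology for a locally coherent Grothendieck category, and then transport between the two descriptions through the duality. Your identification of the finite-intersection property as the only nontrivial step, and the mechanism you propose (uniformity of $H_M$ for $M$ indecomposable pure-injective, plus local coherence to produce a nonzero finitely presented $D$ inside the intersection of the images), is exactly the argument carried out in the cited reference. One small point of care: the explicit formula $\tilde C(X)=\Hom_{\Mod\mac T^c}(C,H_X)$ describes the correct functor on pure-injective $X$ (which is all you need for the topology), but the duality $(\mod\mac T^c)^{\mathrm{op}}\xrightarrow{\sim}coh\mac T$ is usually set up via presentations rather than by this formula on all of $\mac T$; this does not affect the conclusion, since the sets $\mac O(C)$ only involve pure-injective $M$.
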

\subsection{The Ziegler spectrum of $  K(\Inj A)$}
Let $\mac T$ be a compactly generated triangulated category, for every coherent functor $C\in coh \mac T$, the subset $\mac O(C)=\{M\in\Zg\mac T|C(M)\neq 0\}$ with $C\in coh\mac T$
is open in $\Zg \mac T$.
The corresponding closed subset is $I(C)=\{M\in\Zg\mac T|C(M)= 0\}$. By the equivalence of $(\mod \mac T^c)^{op}=coh\mac T$, the set \[I(C)=\{M\in \Zg\mac T|\Hom(C,H_M)=0\},C\in\mod \mac T^c\] is also a closed set in $\Zg\mac T$.
We apply this to compute the open subsets of some Ziegler spectrum $\Zg\mac T$. The subsets $\mac O(C)$ with $C\in coh\mac T$ form a basis of open subsets of $\Zg\mac T$ by Corollary \ref{opb}.

\begin{prop}
Keep the notions in section 3. Then every indecomposable object $I_{l,m}[r]$of $  K(\Inj A_n)$ is an endofinite object.
 \end{prop}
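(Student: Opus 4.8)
The plan is to verify directly that for each indecomposable $I_{l,m}[r]$ the endomorphism ring is finite-dimensional and that $\Hom_{K(\Inj A_n)}(X, I_{l,m}[r])$ has finite length as a module over it, for every compact $X\in K^c(\Inj A_n)\cong\db{A_n}$. Since length over $\End(I_{l,m}[r])$ is controlled by $k$-dimension once the endomorphism ring is a finite-dimensional $k$-algebra, it suffices to show two things: (a) $\dim_k\End_{K(\Inj A_n)}(I_{l,m}[r])<\infty$, and (b) $\dim_k\Hom_{K(\Inj A_n)}(X, I_{l,m}[r])<\infty$ for every $X$ in a generating set of compact objects, because the functors $\Hom(X,-)$ for $X$ running over (shifts of) the indecomposable projective generators of $\db{A_n}$ detect length and each $X$ is a bounded complex of finitely generated injectives.

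First I would reduce to computing homotopy classes of chain maps between the explicit complexes. The complex $I^{\bullet}$ has all terms among the finitely many indecomposable injectives $I_1,\dots,I_n$, and by the computation of $\Hom_{A_1}(I_j,I_k)$ already recorded in the proof of Theorem \ref{main1} each Hom-space between terms is at most one-dimensional, spanned either by an identity or by one of the maps $d_j$. Consequently a chain map $I_{l,m}[r]\to I_{l',m'}[r']$ is, degreewise, a scalar multiple of a composite of $d_j$'s (or an identity), so the space of chain maps is finite-dimensional: in each degree there are only finitely many choices of ``shift amount'' compatible with the grading by the cyclic quiver $C_n$, and compatibility with the differentials $d_j$ (which are themselves the nonzero maps in the pattern) rigidifies the map up to finitely many scalars. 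Passing to the quotient by null-homotopic maps only decreases dimension, so $\Hom_{K(\Inj A_n)}$ between any two of these complexes is finite-dimensional; in particular (a) holds, and taking $X=I_{l,m}[r]$ one sees $\End$ is a finite-dimensional local $k$-algebra since $I_{l,m}[r]$ is indecomposable.

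For (b), every compact object is, up to triangles and shifts, built from the injective complexes $0\to I_j\to I_{j-1}\to\cdots$ of the form $I_{a,b}$ with $a,b$ finite (these are the compact objects, corresponding to $\db{A_n}$), so by a five-lemma/long-exact-sequence argument it is enough to bound $\dim_k\Hom_{K(\Inj A_n)}(I_{a,b},I_{l,m}[r])$ for $a,b$ finite. But $I_{a,b}$ is a bounded complex, so any chain map out of it into $I_{l,m}[r]$ is supported in the finitely many degrees between $a$ and $b$, and by the same rigidity as above there are only finitely many scalars' worth of such maps; hence the Hom-space is finite-dimensional. Therefore $\Hom_{K(\Inj A_n)}(X, I_{l,m}[r])$ is a finite-dimensional module over $\End_{K(\Inj A_n)}(I_{l,m}[r])$, hence of finite length, for all compact $X$, which is exactly the definition of endofinite. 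The main obstacle is the bookkeeping in (b): making the five-lemma reduction precise for the semiinfinite truncations $I_{-\infty,m}$, $I_{l,+\infty}$ and $I^{\bullet}$ itself, one should either argue directly that a chain map from a bounded complex into such a complex is automatically bounded in support (which it is, by finiteness of the source), or invoke the compact generation of $K(\Inj A_n)$ to restrict attention to the compact generators $I_{a,b}$ with $a,b$ finite.
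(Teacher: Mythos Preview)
Your proposal is correct and follows essentially the same route as the paper: show that $\Hom_{K(\Inj A_n)}(C, I_{l,m}[r])$ is finite-dimensional over $k$ for every compact $C$, so that finite length over $\End(I_{l,m}[r])$ is automatic. The paper's version is sharper and shorter: it records the bound $\dim_k\Hom_{K(\Inj A_n)}(I',I)\le 1$ for any two indecomposables (in particular $\End(I_{l,m}[r])\cong k$) and then invokes the Krull--Schmidt property of $K^c(\Inj A_n)$, which makes your five-lemma reduction and the separate bookkeeping for the semi-infinite truncations unnecessary.
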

 \begin{proof}
 We need to calculate the Hom-space $\Hom_{K(\Inj A_n)}(C,I_{l,m}[r])$ for all $C\in   K^c(\Inj A_n)$. For indecomposable objects $I, I'\in   K(\Inj A_n)$,
\[\dim_k\Hom_{  K(\Inj A_n)}(I',I)\leq 1\] and $\Hom_{  K(\Inj A_n)}(I,I)\cong k$. By the fact that $  K^c(\Inj A_n)$ is Krull-Schmidt $k$-linear triangulated category, we know that Hom-space
$\dim_k\Hom_{  K(\Inj A_n)}(C,I_{l,m}[r])$ is finite for all $C\in   K^c(\Inj A_n)$.
 \end{proof}
Every endofinite object in $  K(\Inj A)$ is pure injective, thus the Ziegler spectrum of $  K(\Inj A)$ is explicit.
\begin{cor}
Let $A=kC_n/I_n$ for some $n\in \mabb N^*$. Then the Ziegler spectrum $\Zg(  K(\Inj A))$ consists of the point $[I_{m,n}[r]]$ for each indecomposable object $I_{m,n}[r] \in  K(\Inj A)$ up to isomorphism.
\end{cor}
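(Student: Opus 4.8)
The plan is to obtain the statement as a formal consequence of Theorem \ref{main1} and the Proposition immediately preceding the Corollary. Recall first that, by definition, the points of $\Zg(K(\Inj A))$ are the indecomposable injective objects of $\Mod K^c(\Inj A)$, and that by Proposition \ref{pinj1} these are in bijection with the indecomposable pure-injective objects of $K(\Inj A)$. Thus the whole task reduces to describing the indecomposable pure-injective objects of $K(\Inj A)$ and matching them with the list of Theorem \ref{main1}.

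First I would verify that every $I_{l,m}[r]$ is pure-injective. The preceding Proposition shows that each $I_{l,m}[r]$ is endofinite; and an endofinite object $M$ of a compactly generated triangulated category is always pure-injective, because the finite-length condition on $\Hom(X,M)$ over $\End M$ for compact $X$ forces the summation map $M^{(I)}\to M$ to factor through the canonical map $M^{(I)}\to M^I$, which is exactly condition (2) of Proposition \ref{pinj1}. Since Theorem \ref{main1} also tells us that each $I_{l,m}[r]$ is indecomposable (its endomorphism ring is local and $K(\Inj A)$ is idempotent complete), each $I_{l,m}[r]$ gives a genuine point $[I_{l,m}[r]]$ of $\Zg(K(\Inj A))$.

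For the converse, let $M$ be any point of $\Zg(K(\Inj A))$. Then $M$ is in particular an indecomposable object of $K(\Inj A)$, so Theorem \ref{main1} gives $M\cong I_{l,m}[r]$ for some $l\le m$ in $\mabb Z\cup\{\pm\infty\}$ and some $r\in\mabb Z$, and the relation $I_{l,m}[r]=I_{l+nk,\,m+nk}[r+nk]$ recorded there pins down which of these objects coincide. Putting the two directions together identifies the underlying set of $\Zg(K(\Inj A))$ with the set of isomorphism classes $[I_{l,m}[r]]$, which is the assertion.

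I do not expect a genuine obstacle here: all of the substance is already contained in Theorem \ref{main1} (completeness of the list of indecomposables, including the unbounded complexes --- the periodic $I^\bullet$ and the one-sided truncations $I_{l,+\infty}$ and $I_{-\infty,m}$) and in the endofiniteness Proposition. The one point I would state carefully is the implication ``endofinite $\Rightarrow$ pure-injective'', since that is the only place where a property of $K(\Inj A)$ beyond the bare classification enters; everything else is bookkeeping, so the final write-up should be short.
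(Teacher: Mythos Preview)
Your proposal is correct and follows exactly the paper's approach: the paper gives no separate proof of the Corollary, relying only on the sentence ``Every endofinite object in $K(\Inj A)$ is pure injective, thus the Ziegler spectrum of $K(\Inj A)$ is explicit,'' together with Theorem~\ref{main1} and the preceding endofiniteness Proposition. Your write-up just spells out this reasoning in more detail.
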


\begin{exm}

 If $\mac T=  K(\Inj A)$, where $A=k[x]/(x^2)$. It is known that $  K^c(\Inj A)\cong \db A$.
Denote $A_{m,n},A_{-\infty,n},A_{m,+\infty}$ be the complexes with $A$ in degree $m(resp. -\infty,m)$ to $n(resp. n,+\infty)$ and the differential is $d:A\rta A,a\mapsto xa$.
We know that the non zero morphism between complexes in $K^b(\inj A)$ is the linear combinations of the forms:
\begin{enumerate}
 \item[(1)] \[\xymatrix{0\ar[r]&\ldots\ar[r] &A\ar[r]^x&\ldots\ar[r]^x &A\ar[r]^x\ar[d]^0 &\ldots\ar[d]^0\ar[r]^x& A\ar[r]\ar[d]^x &0\\
&&&0\ar[r]&A\ar[r]^x&\ldots\ar[r]^x&A\ar[r]&0}\]
\item[(2)] \[\xymatrix{0\ar[r]&\ldots\ar[r] &A\ar[r]^x\ar[d]^1&\ldots\ar[r]^x &A\ar[r]^x\ar[d]^1&\ldots\ar[r]^x& A\ar[r]&0\\
&0\ar[r]&A\ar[r]^x&\ldots\ar[r]^x&A\ar[r]&0&&}\]
\item[(3)] \[\xymatrix{&&&0\ar[r]&A\ar[r]^x\ar[d]^1&\ldots\ar[r]^x&A\ar[r]\ar[d]^1&0\\
0\ar[r]&\ldots\ar[r] &A\ar[r]^x&\ldots\ar[r]^x &A\ar[r]^x &\ldots\ar[r]^x& A\ar[r]&0}
\]
\item[(4)] \[\xymatrix{&0\ar[r]&A\ar[r]^x\ar[d]^0&\ldots\ar[r]^x&A\ar[r]\ar[d]^x&0&&\\
0\ar[r]&\ldots\ar[r] &A\ar[r]^x&\ldots\ar[r]^x &A\ar[r]^x&\ldots\ar[r]^x& A\ar[r]^x&0}\]
\end{enumerate}
By the description of morphisms, every map $A_{m,n}\rta A_{-\infty,+\infty}$ is 0 in $\mac T$, for any $m,n\in\mabb Z$. Thus $A_{-\infty,+\infty}\notin\mac O(A_{m,n})$.
The functor $H_{A_{m,n}}=\Hom(-,A_{m,n})|_{\mac T^c}\in\mod \mac T^c$, the open subset $\mac O(H_{A_{m,n}})$ consists of the functors represented by complexes $A_{m',n'}\cup \{A[n],A[m]\}$ where $m'<n'$ and $m\leq m'\leq n$ or $m\leq n'\leq m$.
The union of open subsets $\mac O=\cup_{X\in ind (K^b(\inj A))}\mac O(H_X)$ is open. We get that the complement of $\mac O$ has only one point $A_{+\infty,-\infty}$. The point $A_{+\infty,-\infty}$ is not
 an isolated point in $\Zg(\mac T)$ \cite[Proposition 4.5]{gp05}. Thus it is an accumulation point of the indecomposable objects in $K^b(\inj A)$. It is also the accumulation point of the set $\cup_{i\in\mabb Z}\mac O(A[i])$, thus is an accumulation
point of the set $\{A[i]:i\in\mabb Z\}$.

Moreover, the Ziegler spectrum $\Zg\mac T$ is not quasi-compact.
Consider the object $H_{A_{n,+\infty}}\in\mod \mac T^c$, the open subset is of the form
\[\mac O(H_{A_{n,+\infty}})=\{A_{m,+\infty},A_{-\infty,m}(m\geq n),A_{m,r}(m\geq n or r\geq n),A_{-\infty,+\infty}\}.\]
The family of open subsets $\{\mac O(H_{A_{n,+\infty}})\}_{n\in\mabb Z}$ is an open covering of $\Zg\mac T$. But it has no finite subcovering of $\Zg\mac T$.
\end{exm}
\subsection*{Acknowledgements}
This paper is a part of the author's Ph.D thesis. The author would like to thank his supervisor, Henning Krause, for his constant support and inspiring discussions.
He would also like to thank Claus M. Ringel and Dieter Vossieck for their stimulating explanations and viewpoints. The author also thanks Yong Jiang for his suggestions and corrections for
improvements the previous versions of this paper. The author is supported by the China Scholarship Council (CSC).
\bibliographystyle{plain}

\bibliography{bib/thesis}
\end{document}